\newcommand{\cupdot}{\mathbin{\mathaccent\cdot\cup}}
\theoremstyle{plain}
\newtheorem{theorem}{Theorem}[section]
\newtheorem{lemma}[theorem]{Lemma}
\newtheorem{claim}[theorem]{Claim}
\theoremstyle{definition}
\newtheorem{definition}[theorem]{Definition}
\newcommand{\Bin}{\ensuremath{\textrm{Bin}}}
\newcommand{\eps}{\ensuremath{\varepsilon}}
\newcommand{\calL}{\ensuremath{\mathcal L}}
\newcommand{\calE}{\ensuremath{\mathcal E}}
\newcommand{\calI}{\ensuremath{\mathcal I}}
\newcommand{\calG}{\ensuremath{\mathcal G}}
\newcommand{\Exp}{\ensuremath{\mathbb E}}
\newcommand{\Gnp}{\ensuremath{\mathcal G(n,p)}}
\newcommand{\CN}{\ensuremath{N^*}}
\newcommand{\hX}{\hat{X}}
\newcommand{\hS}{\hat{S}}
\newcommand{\teps}{\tilde{\eps}}
\newcommand{\calH}{\mathcal{H}}
\newcommand{\calJ}{\mathcal{J}}
\newcommand{\calT}{\mathcal{T}}
\newcommand{\calB}{\mathcal{B}}
\title{Monochromatic cycle covers in random graphs}
\author{
D\'aniel Kor\'andi \thanks{Institute of Mathematics,
EPFL, Lausanne, Switzerland. Email: {\tt
daniel.korandi@epfl.ch.} Research supported in part by SNSF grants 200020-162884 and 200021-175977.}
\and
Frank Mousset \thanks{Department of Computer Science,
ETH, 8092 Z\"urich, Switzerland. \newline Email: {\tt \{frank.mousset|nskoric\}@inf.ethz.ch.}} 
\and
Rajko Nenadov 
\thanks{School of Mathematical Sciences,
Monash University, VIC 3800, Australia.  \newline Email: {\tt rajko.nenadov@monash.edu.}}
\and
Nemanja \v Skori\'c $^\dag$
\and
Benny Sudakov \thanks{Department of Mathematics,
ETH, 8092 Z\"urich, Switzerland. Email: {\tt benjamin.sudakov@math.ethz.ch.}
Research supported in part by SNSF grant 200021-175573.}
}
\begin{document}
\maketitle

\abstract{
A classic result of Erd\H{o}s, Gy\'arf\'as and Pyber states that
for every coloring of the edges of $K_n$ with $r$ colors, there is a cover
of its vertex set by at most $f(r) = O(r^2 \log r)$ vertex-disjoint monochromatic
cycles. In particular, the minimum number of such covering cycles does not depend on the size of $K_n$ but only on the number of colors.
We initiate the study of this phenomenon in the case where $K_n$ is replaced by
the random graph $\Gnp$. Given a fixed integer $r$ and $p =p(n) \ge n^{-1/r + \eps}$,
we show that with high probability the random graph $G \sim \Gnp$ has the
property that for every $r$-coloring of the edges of $G$, there is a collection of
$f'(r) = O(r^8 \log r)$ monochromatic cycles covering all the vertices of $G$. Our
bound on $p$ is close to optimal in the following sense: if $p\ll (\log n/n)^{1/r}$, then
with high probability there are colorings of $G\sim\Gnp$ such that the number 
of monochromatic cycles needed to cover all vertices of $G$ grows with $n$.}

\section{Introduction}

In this paper,
we consider a question of the following type: For a certain class of graphs
$\calG$, is it true that the vertex set of every $r$-edge-colored graph $G\in
\calG$ can be covered with a number of monochromatic paths or cycles\footnote{Single edges and vertices count as ``degenerate'' cycles for the purposes of this paper.} that only
depends on the number of colors $r$?

The study of such questions dates back to the 1960s, when Gerencs\'er and
Gy\'arf\'as \cite{gyarfas67} observed that every $2$-coloring of the edges of
the complete graph $K_n$ contains two vertex-disjoint monochromatic paths that
together cover all vertices of the graph.
Later Gy\'arf\'as \cite{gyarfas1989covering} conjectured that the analogous
statement for $r$ colors should also be true, namely, that every $r$-edge-colored
$K_n$ can be covered with $r$ vertex-disjoint monochromatic paths. He made a
step towards this conjecture by showing that there is always a cover that
uses $O(r^4)$ (not necessarily disjoint) monochromatic paths. The case $r = 3$
was only recently resolved by Pokrovskiy \cite{pokrovskiy2014partitioning} and
for $r \ge 4$ the conjecture remains open. 

Strengthening Gy\'arf\'as' result,  Erd\H{o}s, Gy\'arf\'as and Pyber
\cite{erdos1991vertex} showed that the vertices of every $r$-colored $K_n$ can
be covered by $O(r^2 \log r)$ vertex-disjoint monochromatic \emph{cycles}. It
is worth noting that their proof is one of the first applications of the
\emph{absorbing method}, a technique that has turned out to be extremely useful
for embedding-type problems.
This was subsequently improved to $O(r \log r)$
cycles by Gy\'arf\'as, Ruszink\'o, S\'ark\"ozy and Szemer\'edi
\cite{gyarfas2006improved}. For $r=2$, Lehel conjectured that, just like
for paths, the vertices can be covered by two vertex-disjoint monochromatic
cycles of different colors. This was eventually established by Bessy and
Thomass\'e \cite{bessy2010partitioning}.
Some generalizations of these results
concerning more complicated graphs other than paths or cycles were obtained in
\cite{grinshpun2016monochromatic,sarkozy2013improved}.
Similar properties of host graphs other than complete
 graphs were also studied: complete bipartite graphs are considered in
\cite{gyarfas1989covering,haxell95,lang2017local}, complete graphs with
only few edges missing in \cite{gyarfas97nearlycompl}, graphs
with large minimum degree in
\cite{balogh2014partitioning,debiasio2017monochromatic,letzter2015monochromatic}
and graphs with small independence number in \cite{sarkozy2011monochromatic}.
For further results and
research directions we refer the reader to the recent survey by Gy\'arf\'as
\cite{gyarfas2016vertex}.

In this paper, we consider the same problem in the setting of the binomial
random graph model $\Gnp$. The study of covering $\Gnp$ by monochromatic
pieces was initiated by Bal and DeBiasio \cite{bal2015partitioning}, who showed
that if $p = \tilde \Omega(n^{-1/3})$ then with high probability (w.h.p), $G
\sim \Gnp$ has the property that every $2$-coloring of the edges of $G$
contains two vertex-disjoint monochromatic trees that cover its vertex set.
They proposed a conjecture that already $p \gg (\log n / n)^{1/2}$ suffices,
which was recently verified by Kohayakawa, Mota and Schacht
\cite{kohayakawa2016monochromatic}.
Here we
continue this line of research by studying random analogs of the theorems of
Gy\'arf\'as \cite{gyarfas1989covering} and Erd\H{o}s, Gy\'arf\'as and Pyber \cite{erdos1991vertex}. Our main result reads as follows:

\begin{theorem} \label{thm:mainr2} 
  Fix $\eps > 0$ and an integer $r \ge
  2$. If $p=p(n) > n^{-1/r + \eps}$, then $G \sim \Gnp$  w.h.p has the property that
  for every $r$-coloring of the edges of $G$, there is a collection of at most
  $(100r)^8 \log r$ monochromatic cycles covering all the vertices of $G$.
\end{theorem}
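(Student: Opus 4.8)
The plan is to follow the \emph{absorbing method} of Erd\H{o}s, Gy\'arf\'as and Pyber~\cite{erdos1991vertex}, but to carry it out inside a sparse random host, which forces every step to be made ``robust''. A structural point that dictates the whole set-up is that the adversary chooses the colouring \emph{after} seeing $G$: we therefore cannot union bound over colourings (there are $r^{e(G)}$ of them), and so every property of $G$ that we rely on must be phrased as a statement about $G$ alone, quantified over \emph{all} sufficiently rich subgraphs --- it then applies in particular to an arbitrary colour class of an arbitrary $r$-colouring. With that in mind I would split the argument into (i) a deterministic toolkit of such robust properties that $G\sim\Gnp$ enjoys w.h.p., (ii) the construction of a small absorbing structure, and (iii) the main iterative cover.

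For the toolkit, the starting observation is pigeonhole: since $e(G)=(1+o(1))\binom n2 p$, in any $r$-colouring some colour class $G_c$ has at least $\binom n2 p/r$ edges, i.e.\ density (relative to $G$) at least $1/r$. The heart of the matter is then a \emph{dichotomy} for such a dense colour class: either $G_c$ contains, on a linear-sized vertex set, a ``robustly expanding'' subgraph --- which, being an expander, contains a cycle through a $(1-o(1))$-fraction of its vertices and remains traceable even after an adversary deletes $o(n)$ vertices --- or $G_c$ is ``degenerate'', meaning that after deleting few vertices it disconnects, or it concentrates on a vertex set whose $G$-induced subgraph is itself dense and can be recursed upon. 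Establishing this dichotomy in a genuinely sparse random graph is, I expect, the \textbf{main obstacle}: unlike in $K_n$, a dense monochromatic subgraph need \emph{not} contain a linear cycle (for instance a star-forest on a near-linear vertex set), so one cannot invoke Dirac-type theorems and must instead run a stability/structural analysis of $G_c$ based on sparse pseudorandomness (Chernoff and Janson bounds for subgraph counts, a sparse regularity decomposition or a P\'osa rotation-extension argument for Hamiltonicity of dense induced subgraphs), all of it uniform over subgraphs and stable under deletion of $o(n)$ vertices. It is precisely the amount of ``density headroom'' needed to iterate this cleaning about $r$ times that pins the exponent to $1/r$, and the second half of the theorem shows that one cannot go below $(\log n/n)^{1/r}$. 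The toolkit is completed by a \emph{connecting lemma}: any two large vertex sets inside a dense colour class are joined by many internally disjoint monochromatic paths of bounded length, even after an arbitrary $o(n)$ vertices are removed.

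With the toolkit in place, the absorber is built first: using the connecting lemma and the abundance of short monochromatic detour configurations, one produces a bounded family of vertex-disjoint monochromatic paths whose union $A$ has linear size and such that for every $W\subseteq V(G)\setminus A$ with $|W|\les n/(100r)^{7}$ the paths in $A$ can be re-routed through the vertices of $W$, keeping $A\cup W$ covered by the same bounded number of monochromatic cycles. The main cover then runs as in~\cite{erdos1991vertex} on $V(G)\setminus A$: in each round one applies the dichotomy to the densest surviving colour to pull out $\mathrm{poly}(r)$ vertex-disjoint monochromatic cycles covering a constant fraction of the still-uncovered set --- using the ``degenerate'' branch to discard a colour when expansion fails --- so that after $O(\log r)$ rounds at most $n/(100r)^{7}$ vertices remain uncovered; these are then absorbed into $A$. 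Counting $\mathrm{poly}(r)$ cycles per round over $O(\log r)$ rounds, together with the $O(\mathrm{poly}(r))$ cycles of the absorber, yields the stated bound $(100r)^{8}\log r$. Beyond the structural dichotomy, the remaining delicate issues are making each toolkit property robust enough to survive the $O(\log r)$ nested applications, and engineering the absorber so that a \emph{single} re-routing swallows the whole (still linear-order) leftover rather than one vertex at a time.
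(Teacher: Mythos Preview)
Your outline is not the route the paper takes, and as written it has a genuine gap rather than merely a different strategy.

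The paper does \emph{not} build an absorber and does not run an Erd\H os--Gy\'arf\'as--Pyber iteration on colour classes of $G$. Instead it fixes, before revealing $G$, a partition $V(G)=W_1\cupdot\dots\cupdot W_s$ into $s=(101r)^4$ blocks of size $\leq n/(100r)^4$, and for each block $W_i$ separately produces $O(r^4\log r)$ monochromatic cycles (freely using vertices outside $W_i$) covering $W_i$; the resulting cycles from different blocks are allowed to overlap. Each block is covered in two stages. The first stage (Lemma~\ref{lemma:smallset-cover}) reduces the leftover to $O(1/p)$ vertices via an auxiliary graph on $W_i$ and a Ramsey-type DFS lemma; this part works already for $p\gg (\log n/n)^{1/2}$. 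The second stage (Lemma~\ref{lemma:tinyset-cover}) is the heart of the proof and is where the threshold $n^{-1/r+\eps}$ enters: to cover an arbitrary $Q\subseteq W_i$ of size $O(1/p)$, one builds an auxiliary edge-coloured graph $H$ on $W_i$ whose edges encode robust monochromatic connectivity through a bounded number of ``levels'' of $U$; the key is that $H$ has independence number $\leq 4r$, so S\'ark\"ozy's theorem covers $H[Q]$ by $O(r^4\log r)$ monochromatic auxiliary cycles, which are then lifted to $G$ via Haxell's hypergraph matching criterion. The independence-number bound is obtained by a delicate inductive ``tower/cascade'' construction which at each step expands through common neighbourhoods of $r$-sets, and it is exactly the requirement $np^r\to\infty$ (so that an $r$-set has many common neighbours) that forces $p>n^{-1/r+\eps}$.

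Against this, your plan misidentifies both the bottleneck and the source of the exponent. You locate the $1/r$ in ``density headroom needed to iterate the cleaning about $r$ times''; in the paper the $1/r$ comes from the pigeonhole on $r$ colours applied \emph{inside} common neighbourhoods of $r$-sets in the cascade, not from any iteration depth. More seriously, your two load-bearing black boxes --- the structural dichotomy ``robust expander vs.\ degenerate'' for a colour class of relative density $1/r$ in $\Gnp$, and an absorber that can swallow \emph{any} set of size $n/(100r)^7$ in one re-routing using boundedly many monochromatic paths --- are not standard facts, and you give no mechanism for either. In $\Gnp$ with $p=n^{-1/r+\eps}$ a colour class of relative density $1/r$ can be a union of stars/short paths on a near-linear vertex set, so neither ``long cycle'' nor ``delete few vertices and disconnect'' need hold in any usable form; and an absorber with the stated power would already solve the hard part of the problem. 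Without a concrete construction at the level of the paper's towers/cascades (or a genuine substitute), the proposal does not constitute a proof.
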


Although we believe that it should be possible to choose the cycles so that
they are vertex-disjoint, our result does not give this. We remark that the
bound on $p$ in the theorem is almost best possible. Indeed, a result of
Bal and DeBiasio \cite[Theorem 1.7]{bal2015partitioning} shows that for $p \ll
(\log n / n)^{1/r}$ w.h.p there exists an $r$-coloring of $G\sim\Gnp$ which
requires an unbounded number of monochromatic components (and in particular,
cycles) to cover all vertices. Their coloring is based on the fact that such
a $G$ contains an independent set $X$ of unbounded size with the property
that every vertex has at most $r-1$ neighbors in $X$. Now one can color
all edges outside of $X$ with the color $r$, and for every $v\in V(G)\setminus X$ 
color the edges from $v$ to $X$ using each of the colors from $[r-1]$
at most once. It is not difficult to verify that every monochromatic component
can cover at most one vertex of $X$, and so at least $|X|$ such components are
needed to cover the whole graph.
With this in mind, it seems likely that $(\log n/n)^{1/r}$ is the correct order of magnitude of
the threshold for the property of always having a cover by a bounded number of
monochromatic cycles.

We remark that Theorem \ref{thm:mainr2} is an example of the more general
phenomenon that sufficiently dense -- but still very sparse -- random graphs
$\Gnp$ often have global properties that are remarkably similar to those of the
(much denser) complete graph $K_n$. Transferring classic results about complete
graphs to the random graph setting is an active line of research with some of
the milestones achieved only recently (see the survey by Conlon
\cite{conlon2014combinatorial}).

\paragraph{Structure of the paper}
The paper is organized as follows. In the next section we give the
proof of Theorem \ref{thm:mainr2}, assuming two key lemmas: the first one
shows that we can cover all but $O(1/p)$ vertices, while the other one takes
care of the remaining
vertices. In Section \ref{sec:tools} we collect some tools and properties of
random graphs that are used in the proof of these lemmas. The two lemmas
are then proved in Sections \ref{sec:almost_covering} and
\ref{sec:covering_small_set},
respectively. In the last section we discuss some open problems and future
research directions.

\paragraph{Notation}
We use the common notation $[r] = \{1, \dots, r\}$ for the first $r$ positive
integers. Instead of saying that a set has size $r$, we sometimes say that it
is a \emph{$r$-set}. We occasionally write $A=B_1\cupdot \dots\cupdot B_t$ to mean
that $B_1,\dots,B_t$ form a partition of $A$.
For $a,b>0$, we write $a\pm b$ to denote the interval
$[a-b,a+b]$. 

If $G$ is a graph, we let $N_G(v)$ denote the neighborhood of $v$, that is,
$N_{G}(v) = \{ w : \{v,w\} \in E(G) \}$. Similarly, if $A\subseteq V(G)$, then
$N_G(A)=\bigcup_{v\in A}N_G(v)$ is the neighborhood of $A$. On the other hand,
we denote by $\CN_G(A) = \bigcap_{v\in A}N_G(v)$ the \emph{common} neighborhood of
$A$.
For subsets $A,B \subseteq V(G)$ and a vertex $v\in V(G)$, we write $N_G(v,B)$,
$N_G(A,B)$ and $\CN_G(A,B)$ for the sets $N_G(v)\cap B$, $N_G(A)\cap B$ and
$\CN_G(A)\cap B$, respectively.
If $A,B\subseteq V(G)$ are disjoint, we write $e_G(A,B)$ for number of edges
with one endpoint in $A$ and another in $B$. If it is clear which graph $G$ we are talking about, we omit the
subscript $G$ in the above notations. 

Given a coloring of the edges of $G$ with
colors $\{1, \dots, r\}$, we define $G_i$ to be the spanning subgraph of $G$
containing the edges of color $i$. In this setting, we abbreviate $N_{G_i}(v)$,
$N_{G_i}(A,B)$, $\CN_{G_i}(A)$, etc.\ by $N_i(v)$, $N_i(A, B)$,
$\CN_i(A)$, and so on. So for example $N_i(v,B)$ is the neighborhood of $v$ in
the set $B$ via edges of color $i$.

The vertex set of the random graph $\Gnp$ is understood to be $[n]$. We say that $G\sim \Gnp$ satisfies some property with high probability (w.h.p) if the property holds for $G$ with probability tending to 1 as $n$ tends to infinity. 

We routinely omit floor and ceiling signs if they are not
essential.

\section{Proof of Theorem \ref{thm:mainr2}}

Theorem \ref{thm:mainr2} states that if the edges of $G\sim\Gnp$ (where $p> n^{-1/r+\eps})$ are colored with $r$ colors, then there are $O(r^8\log r)$ monochromatic cycles covering all its vertices. We use the following strategy to find such cycles. 

Our first step is to partition the vertex set of $G$ into
$s=(101r)^4$ disjoint sets whose sizes differ by at most 1:
\[V(G) = W_1\cupdot\dots\cupdot W_s,\]
where each $W_i$ has size at most $n/(100r)^4$. 
Next, we consider one particular set $W_i$ and try to find $O(r^4\log r)$
monochromatic cycles in $G$ that cover all the vertices in $W_i$. Importantly
for our proof method, these cycles can (and will) use vertices \emph{outside} of $W_i$. Since
there are $s=O(r^4)$ sets, finding such cycles for each $W_i$ independently results in a cover of all vertices by
$O(r^8\log r)$ cycles (although many vertices might be covered multiple times).

We cover the vertices in the set $W_i$ in two steps: first, we cover all but $O(1/p)$ vertices, and then we cover the remaining ones. 
Here the quantity $1/p$ comes into play as the ``threshold'' size of a set $X$ to expand to all other vertices (note that each vertex has roughly $np$ neighbors). Indeed, the proof of the first step relies on the fact that every vertex set of size $\Omega(1/p)$ is adjacent to $\Omega(n)$ other vertices. On the other hand, it is key to our second step that the individual neighborhoods of $O(1/p)$ vertices are almost disjoint.
In any case, our arguments for the two steps are entirely different, so it is natural to split the proof accordingly.
More precisely, we establish the following two lemmas.

\begin{lemma}
  \label{lemma:smallset-cover}
  For every integer $r \ge 2$, there is a constant $K=K(r)>0$ such that the following
  holds. Let $W \subseteq [n]$ be a fixed set of at most $n / (100 r)^4$ vertices,
  and let $G\sim \Gnp$ where $p=p(n) \gg (\log n / n)^{1/2}$. Then w.h.p for
  every $r$-coloring of the edges of $G$, there is a collection of $3r^2$
  vertex-disjoint monochromatic cycles that cover all but at most $K/p$
  vertices of $W$.
\end{lemma}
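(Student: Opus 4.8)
The plan is to cover $W$ by monochromatic cycles through a majority-color / expansion argument. First I would observe that since $W$ is fixed and $|W| \le n/(100r)^4$, we have plenty of room outside $W$ to route cycles; moreover, with $p \gg (\log n/n)^{1/2}$ the random graph enjoys very strong pseudorandomness, so I expect to be able to cite from Section~\ref{sec:tools} statements of the form: every pair of disjoint sets of size $\Omega(1/p)$ (or even $\Omega(\sqrt{n \log n})$) has the ``right'' number of edges between them, and every set of size $\Omega(1/p)$ has common neighbourhood of size $\Omega(n)$ with any bounded number of other such sets. The key structural step is to define, for each vertex $v \in W$, its \emph{majority color} to be a color $c(v) \in [r]$ with $|N_{c(v)}(v)| \ge np/(2r)$ (such a color exists w.h.p.\ because $v$ has $\sim np$ neighbours in $G$). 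Partitioning $W = W^{(1)} \cupdot \dots \cupdot W^{(r)}$ according to majority color, it suffices to cover each $W^{(c)}$ by $O(r)$ monochromatic cycles of color $c$, using at most $O(1/p)$ leftover vertices per class; summing over $r$ classes gives $3r^2$ cycles and $K/p$ uncovered vertices after adjusting constants.

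Fix a color class $W^{(c)}$ and write $m = |W^{(c)}|$. Each $v \in W^{(c)}$ has $\ge np/(2r)$ neighbours of color $c$. The heart of the argument is a greedy absorption-free construction of a long monochromatic path of color $c$ through $W^{(c)}$: I would build a path $P$ vertex by vertex, always maintaining that its two endpoints $x,y$ each have $\Omega(np)$ color-$c$ neighbours outside $V(P)$ and outside $W$, so that there is room to extend. To incorporate a new vertex $v \in W^{(c)}$ that is not yet on $P$, I use that $v$ has $\ge np/(2r)$ color-$c$ neighbours and that $y$ has $\Omega(np)$ color-$c$ neighbours outside $V(P)$: by the edge-count pseudorandomness property (specifically, that between a set of size $\Omega(np) = \Omega(\sqrt{n\log n})$ and $N_c(v)$ there are $\sim p$ times the product of the sizes many edges, hence at least one edge of color $c$ from $N_c(y)$ to $N_c(v)$ — wait, we need the edge itself to be color $c$; instead pick $u \in N_c(y)\setminus (V(P)\cup W)$ and use that $u$ and $v$ have a common color-$c$ neighbour $w$, which holds because $\CN_c(\{u,v\})$ is large when both $N_c(u), N_c(v)$ are of size $\Omega(np)$, again by pseudorandomness), thereby extending $y \to u \to w \to v$ and appending. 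Here I only need $|N_c(u)| = \Omega(np)$ for \emph{most} vertices $u$, which fails only for $O(1/p)$ exceptional vertices by a counting/concentration argument (a vertex with fewer color-$c$ neighbours than this must, summed over a large set, contradict the total edge count). Those exceptional vertices of $W^{(c)}$ are exactly the $O(1/p)$ we leave uncovered. Once all non-exceptional vertices of $W^{(c)}$ lie on a single path, we close it into one cycle by connecting its two endpoints through two more pseudorandom expansion steps outside $W$. To keep the final cycles \emph{vertex-disjoint} across the $r$ color classes (and to make the endpoint-room invariants simultaneously satisfiable), I would reserve, before starting, $r$ disjoint ``buffer'' sets $B_1,\dots,B_r \subseteq [n]\setminus W$ each of size $n/(2r)$, route the color-$c$ path/cycle so that all of its vertices outside $W^{(c)}$ lie in $B_c$, and verify the pseudorandom expansion properties hold relative to these linear-sized buffers.

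The remaining bookkeeping is: (i) showing that all needed pseudorandom properties of $G\sim\Gnp$ hold w.h.p.\ for $p \gg (\log n/n)^{1/2}$ — I expect these to be exactly the lemmas gathered in Section~\ref{sec:tools}, applied to sets of size down to $\Theta(\sqrt{n\log n}) = \Theta(1/p \cdot \text{polylog})$, which is why the hypothesis $p \gg (\log n/n)^{1/2}$ (rather than the weaker $p \ge n^{-1/r+\eps}$) is needed here; (ii) checking the count of exceptional (low-color-degree) vertices is $O(1/p)$ per color and hence $O(r/p) = O(1/p)$ in total with $K = K(r)$; (iii) confirming the construction uses at most $3r^2$ cycles — in fact one cycle per color class suffices in the cleanest version, and the slack up to $3r^2$ comes from handling short color classes $W^{(c)}$ (where $m = O(1/p)$ and we simply leave all of $W^{(c)}$ uncovered) or from breaking a path when an invariant temporarily fails and restarting, which can happen at most a bounded number of times.

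The main obstacle, I expect, is the path-extension step: one must extend the path by a color-$c$ vertex $v$ while guaranteeing \emph{after} the extension that the new endpoint again has $\Omega(np)$ usable color-$c$ neighbours (in the buffer, off the path), so that the invariant is self-sustaining for $\Theta(n)$ steps. This requires the pseudorandom properties to be robust to the deletion of the up-to-$n$ vertices already used, which forces the extension to always ``reach into'' the linear-sized buffer $B_c$ rather than into an arbitrary small set; making this precise, and in particular choosing the buffer sizes and the threshold $np/(2r)$ so all inequalities close simultaneously for every color, is the delicate part. The uncovered-set bound $K/p$ then drops out automatically as the set of vertices that are ``stuck'' — those whose color-$c$ neighbourhood into the still-available part of $B_c$ has shrunk below the extension threshold, of which there can be only $O(1/p)$ by an edge-counting argument against $B_c$.
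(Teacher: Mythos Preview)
Your majority-color partition $W=W^{(1)}\cupdot\dots\cupdot W^{(r)}$ matches the paper's first step exactly, but the greedy path-extension strategy that follows has a genuine gap.

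The central problem is your claim that only $O(1/p)$ vertices $u$ have $|N_c(u)|=o(np)$. This is simply false for an adversarial coloring: the adversary sees the buffers $B_1,\dots,B_r$ (they are fixed before the coloring) and can, for instance, color every edge incident to $B_c$ with some color $c'\neq c$, making $N_c(u)=\emptyset$ for \emph{every} $u\in B_c$, while still leaving each $v\in W$ with a majority color (just not realized inside the ``correct'' buffer). More generally, there is no edge-counting contradiction: the total number of color-$c$ edges has no lower bound beyond what the class $W^{(c)}$ forces, and those $|W^{(c)}|\cdot np/(2r)$ edge-endpoints can be concentrated on a set of size $O(np)$, leaving $\Omega(n)$ buffer vertices with zero color-$c$ degree. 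Even the cleaner two-step extension $y\to u\to v$ with $u\in N_c(y)\cap N_c(v)$ fails: for a \emph{specific} pair $y,v\in W^{(c)}$ there is no reason their color-$c$ neighborhoods (each of size $\Omega(np)$, but chosen by the adversary inside $N(y),N(v)$) intersect at all. So the path can get permanently stuck long before it has absorbed all but $O(1/p)$ vertices of $W^{(c)}$.

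This is precisely the obstacle the paper's proof is built around. The paper defines an auxiliary graph $H_i$ on $W_i$ where $vw$ is an edge iff $|N_i(v,U)\cap N_i(w,U)|\ge np^2/(50r)^4$, and shows not that $H_i$ is nearly complete (it need not be), but only that its \emph{complement} is $K_{3r}^{c/p}$-free. A separate structural lemma (Lemma~\ref{lemma:gendfs}, proved by a DFS argument) then says that any graph with this property contains $3r-1$ vertex-disjoint cycles covering all but $O_r(1/p)$ vertices; this is where both the $3r-1$ cycles per color and the $K/p$ leftover genuinely come from, not from bookkeeping slack. Finally, each auxiliary edge $vw$ is replaced by a length-two color-$i$ path $v\,f(vw)\,w$ in $G$, and Hall's condition (verified via Lemma~\ref{lemma:density_triples}) guarantees these connectors can be chosen injectively across all $\le 3r^2$ cycles simultaneously, giving vertex-disjointness without any pre-assigned buffers. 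Your proposal is missing both the auxiliary-graph/structural-cover idea and the matching argument that replaces it.
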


\begin{lemma}
  \label{lemma:tinyset-cover}
  Let $r \ge 2$ be a fixed integer and let $\eps,K>0$ be some constants. Let $W \subseteq [n]$ be a fixed set of at most $n/2$ vertices, and let $G \sim \Gnp$ where $p =p(n) \ge n^{-1/r+\eps}$. Then w.h.p in every $r$-coloring of the edges of $G$, every subset $Q \subseteq W$ of size at most $K / p$ can be covered by $400 r^4 \log (4r^2)$ monochromatic cycles.
\end{lemma}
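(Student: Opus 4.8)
The plan is to reduce the statement to a question about covering an auxiliary edge-coloured \emph{complete} graph on (a large subset of) $Q$ by few monochromatic cycles, settle that by invoking a quantitative form of the Erd\H os--Gy\'arf\'as--Pyber theorem \cite{erdos1991vertex}, and then lift the resulting cover back to $G$. What makes this possible is the combination of two facts, both available from Section~\ref{sec:tools}: w.h.p.\ (uniformly over all colourings and over $Q$) any at most $r$ vertices of $G$ have at least $\tfrac12 np^r$ common neighbours, and, since $p\ge n^{-1/r+\eps}$ and $|Q|\le K/p$, this quantity --- as well as intermediate quantities such as $np^2/r^2$ --- dwarfs $|Q|+400r^4\log(4r^2)$. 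So there is always enough room to route monochromatic cycles through fresh common neighbours, even after deleting $Q$ and all vertices used in earlier cycles.

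For the reduction I would proceed as follows. For each pair $\{u,v\}\subseteq Q$, look at the roughly $np^2$ common neighbours $a$ of $u$ and $v$ and at the (unordered) pair of colours of the edges $ua$ and $va$; by pigeonhole some pair of colours is realised by at least a $1/r^2$ fraction of these $a$'s, and we take it as the \emph{label} of $\{u,v\}$. This colours the complete graph on $Q$ with at most $r^2$ labels. A label-monochromatic cycle $v_1\cdots v_\ell v_1$ whose label is a \emph{diagonal} pair $(c,c)$ lifts to a genuine colour-$c$ cycle of $G$ on $v_1,\dots,v_\ell$: between consecutive $v_i,v_{i+1}$ insert a common colour-$c$ neighbour chosen greedily to avoid everything used so far, which is always possible because at least $np^2/r^2$ candidates are available. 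Applying a quantitative Erd\H os--Gy\'arf\'as--Pyber bound of the form $O((r')^2\log r')$ for $r'$-coloured complete graphs, with $r'=r^2$, produces a cover of $Q$ by at most $400r^4\log(4r^2)$ pairwise vertex-disjoint label-monochromatic cycles --- and it is exactly this step that accounts for the power of $r$ and the logarithmic factor in the statement.

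The point that requires work is that off-diagonal labels do not by themselves give monochromatic links, so not every label-monochromatic cycle lifts directly. Here I would use that the same common-neighbour abundance forces genuine monochromatic structure: any vertex adjacent to more than $r$ vertices of $Q$ must send two equally-coloured edges into $Q$, and w.h.p.\ there are many such vertices (an independence argument over the edges from a single vertex to $Q$, made uniform in $Q$ by a union bound over the at most $n^{K/p}$ choices, which is negligible against the exponentially small failure probabilities of the underlying estimates). Concretely one removes from $Q$ a small exceptional set --- of size bounded in $r$ when $\eps$ is not too small, and controlled by a similar but more involved argument with slightly longer connecting paths for the full range of $p$ --- after which the ``diagonal part'' of the labelling behaves, on the remainder, like a colouring of a graph of small independence number, so that an Erd\H os--Gy\'arf\'as--Pyber / S\'ark\"ozy-type cover bound still applies; the removed vertices, being few, are covered by single-vertex (degenerate) cycles, and each lifted cycle is built one at a time while maintaining the invariant that the set of used vertices stays of size $o(np^2)$, so that all greedy insertions succeed.

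The main obstacle is the adversarial nature of the colouring, which is precisely what forces the detour through an auxiliary graph. Two vertices of $Q$ may admit \emph{no} short monochromatic connection at all: for each colour their monochromatic neighbourhoods can be made disjoint and ``pendant'', spanning no monochromatic path out of a star. Hence one cannot hope to thread $Q$ onto a few long cycles directly, and the auxiliary labelling is not automatically cover-friendly. Turning the common-neighbour pigeonhole into a usable structural statement --- that, after discarding a small exceptional set, the labelling is as good as a colouring of a dense or low-independence-number graph --- and identifying and bounding that exceptional set is the technical heart of the proof; the random-graph estimates, the union bound over $Q$, and the disjointness bookkeeping in the lifting step are by comparison routine.
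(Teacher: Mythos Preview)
Your high-level architecture matches the paper's: build an edge-coloured auxiliary graph on $Q$, cover it with few monochromatic cycles via a S\'ark\"ozy-type bound, then lift these to $G$. But the implementation you sketch does not work for the full range $p\ge n^{-1/r+\eps}$, and the part you wave away is exactly the technical core of the lemma.

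Your auxiliary graph is built from length-$2$ connections: a ``diagonal'' edge $uv$ records that many common neighbours $a$ satisfy $au,av$ both in colour $c$. For this graph to have independence number bounded in $r$, you need that any $r+1$ vertices of $Q$ force a diagonal edge; the paper observes (Section~\ref{sec:overview}) that this follows from pigeonhole \emph{on a common neighbour of all $r+1$ of them}, which in turn requires $np^{r+1}$ to be large, i.e.\ $p\gg n^{-1/(r+1)}$. When $n^{-1/r+\eps}\le p\ll n^{-1/(r+1)}$ --- a nonempty range for every $r\ge2$ and small $\eps$ --- a typical $(r+1)$-set in $Q$ has no common neighbour at all, and the adversary can colour so that the diagonal-label graph has unbounded independence number. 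Concretely, for $r=2$ and $p=n^{-1/2+\eps}$ with small $\eps$, any three vertices $u,v,w$ typically share no common neighbour, and for each $a$ adjacent to exactly two of them the adversary gives the two edges different colours; no diagonal edge is produced among $u,v,w$, and the same works for arbitrarily large sets. Your remark that ``any vertex adjacent to more than $r$ vertices of $Q$ must send two equally-coloured edges into $Q$'' is true, but it only tells you that \emph{some} pair in $Q$ has a monochromatic length-$2$ connection through that vertex; it does not bound the independence number, since for a \emph{given} $(r+1)$-set there need be no such high-degree vertex.

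You acknowledge this gap (``controlled by a similar but more involved argument with slightly longer connecting paths for the full range of $p$'') but supply nothing concrete. The paper fills it with the machinery of \emph{towers} and \emph{cascades} (Definitions~\ref{def:tower} and~\ref{def:cascade}): an iterated expansion through $m\approx 1/\eps$ levels $L_1,\dots,L_m$, where each step uses common neighbours of $r$-sets (which \emph{do} exist since $np^r$ is large) to grow sets $S_k$ of size $\mu^k$ with $\mu=\Theta(n^{r\eps})$, carrying monochromatic paths of length up to $m$ back to a vertex of $\hat X$. After $m$ steps the sets are large enough ($\gg(\log n)/p$) that density between two such sets closes a cascade. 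This yields an auxiliary graph $H$ of independence number at most $4r$ (Claim~\ref{cl:indep}), and S\'ark\"ozy's theorem then gives the stated $25(4r\cdot r)^2\log(4r^2)=400r^4\log(4r^2)$ bound. The lifting (Claim~\ref{cl:haxell}) is also substantially harder than your greedy insertion: because the connecting paths have length up to $2m+1$ and live inside the cascade structure, one needs Haxell's hypergraph matching criterion together with a delicate ``independent towers'' analysis (Claims~\ref{claim:indep}--\ref{cl:robust}) to guarantee disjoint paths for all edges of the auxiliary cycle simultaneously. Your proposal contains none of these ingredients, and without them the argument is only valid for $p\gg n^{-1/(r+1)}$, not for the claimed range.
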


Observe that while Lemma \ref{lemma:smallset-cover} provides an approximate
cover of one fixed set $W$, Lemma \ref{lemma:tinyset-cover} applies to all
small subsets $Q \subseteq W$ simultaneously.
We note that Lemma \ref{lemma:smallset-cover} applies to a somewhat larger probability range 
than we actually need it to be; also, the lemma gives vertex-disjoint cycles, but we do not really use this fact.

Combining the two lemmas, we immediately get that w.h.p each fixed set $W_i$
can be covered by $3r^2+400r^2\log(4r^2)$ monochromatic cycles. By the union
bound, this is true for each of the constantly many sets $W_1,\dots,W_s$, and so
we can cover all of $V(G)$ using
\[ s( 3r^2+ 400 r^4 \log (4r^2)) \le (101r)^4\cdot 1000r^4\log r \le
  (100r)^8\log r \]
monochromatic cycles. This completes the proof of Theorem
\ref{thm:mainr2}, although of course we still have to prove
Lemmas \ref{lemma:smallset-cover} and \ref{lemma:tinyset-cover}.

\section{Tools and preliminaries} \label{sec:tools}

In the proof of Lemma \ref{lemma:tinyset-cover} we use the following
generalization of the result of Erd\H{o}s, Gy\'arf\'as and Pyber \cite{erdos1991vertex} to graphs
with a given independence number:

\begin{theorem}[S\'ark\"ozy {\cite{sarkozy2011monochromatic}}] \label{thm:indep}
  If the edges of a graph $G$ with independence number $\alpha$ are colored with
  $r$ colors then $G$ contains a collection of at most $25 (\alpha r)^2 \log(\alpha r)$
  vertex-disjoint monochromatic cycles covering the vertex set of $G$.
\end{theorem}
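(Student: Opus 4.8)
The plan is to follow the absorbing-method argument of Erd\H{o}s, Gy\'arf\'as and Pyber \cite{erdos1991vertex}, feeding in the hypothesis $\alpha(G)\le\alpha$ wherever their proof exploits the completeness of $K_n$. Fix the $r$-coloring of $G$, write $n=|V(G)|$, and note that every subgraph of $G$ again has independence number at most $\alpha$.

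The first observation is that bounded independence number forces density: the complement of $G$ is $K_{\alpha+1}$-free, so $G$ has at least $\tfrac{n^2}{2\alpha}-\tfrac n2$ edges, some color class $G_i$ has at least $\tfrac{n^2}{3\alpha r}$ edges, and hence (after discarding low-degree vertices) contains a subgraph of minimum degree $\Omega(n/(\alpha r))$, which in turn contains a monochromatic cycle on $\Omega(n/(\alpha r))$ vertices. Running this repeatedly on the set of still-uncovered vertices --- each step removing a monochromatic cycle covering an $\Omega(1/(\alpha r))$-fraction of what remains --- covers all but any prescribed $\theta n$ vertices after $O\big(\alpha r\log(1/\theta)\big)$ steps. (For the step to work the current set must be larger than $C\alpha r$ for a suitable constant $C$; when one wants to cover a larger fraction per step, the Chv\'atal--Erd\H{o}s theorem, or the fact that a graph of independence number $\alpha$ has a vertex-disjoint cover by at most $\alpha$ paths, can be substituted for the minimum-degree argument.)

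Iterating all the way down to a bounded number of vertices, however, costs $\Theta(\alpha r\log n)$ cycles, and removing this $\log n$ is the crux --- precisely the point at which Erd\H{o}s, Gy\'arf\'as and Pyber introduced absorption. I would therefore first set aside an \emph{absorbing family} $\mathcal A$ of $O(\mathrm{poly}(\alpha r))$ vertex-disjoint monochromatic cycles, covering a vertex set $A_0$, with the property that for some constant $\delta=\delta(\alpha,r)>0$ and every set $S$ of at most $\delta n$ vertices disjoint from $A_0$, the coloring restricted to $A_0\cup S$ admits a cover by $O(\mathrm{poly}(\alpha r))$ monochromatic cycles. Its existence should follow from a Ramsey/counting argument guaranteeing that in a bounded-independence-number graph one can arrange that every vertex has, in one common color, many ``insertion slots'' --- pairs of consecutive cycle vertices both joined to it --- spread over the cycles of $\mathcal A$. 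With $\mathcal A$ in hand one runs the iteration of the previous paragraph on $V(G)\setminus A_0$ only until at most $\delta n$ vertices remain uncovered --- which takes only $O\big(\alpha r\log(\alpha r)\big)$ steps, hence $O\big((\alpha r)^2\log(\alpha r)\big)$ cycles --- and then absorbs the remaining $\le\delta n$ vertices into $\mathcal A$. Summing the three contributions and optimizing the constants yields the claimed bound $25(\alpha r)^2\log(\alpha r)$.

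The main obstacle is the construction of this absorbing family. In $K_n$ one has complete freedom to reroute cycles; here one must show, using only $\alpha(G)\le\alpha$, that a small family of monochromatic cycles can be made flexible enough to swallow a \emph{linear}-sized leftover set --- the word ``linear'' being essential, since an absorber handling only $n^{1-c}$ leftover vertices would reintroduce a $\log n$ factor. A secondary technical nuisance is the cycle-extraction step in the low-connectivity case: turning a monochromatic piece of bounded independence number into few genuine cycles (rather than paths) requires some care, and is one place where the convention of allowing single edges and vertices as degenerate cycles is convenient.
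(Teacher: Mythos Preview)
The paper does not prove this statement: Theorem~\ref{thm:indep} is quoted as a black box from S\'ark\"ozy~\cite{sarkozy2011monochromatic} and used as a tool in Section~\ref{sec:covering_small_set}. There is nothing in the present paper to compare your argument against.

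As for the proposal itself, it is a plausible outline of S\'ark\"ozy's strategy but not a proof. You correctly identify that the Erd\H{o}s--Gy\'arf\'as--Pyber scheme has two phases --- iterated removal of long monochromatic cycles followed by absorption of a linear-size leftover --- and you correctly observe that bounded independence number supplies the density needed for the first phase. But you then explicitly flag the construction of the absorbing family as ``the main obstacle'' and offer only the sentence ``Its existence should follow from a Ramsey/counting argument\dots'' in lieu of a construction. That is precisely the nontrivial content of S\'ark\"ozy's paper: one must actually build, inside a graph with independence number $\alpha$, a bounded collection of monochromatic cycles that can absorb any linear-size set, and this requires a concrete structural lemma (in S\'ark\"ozy's case, a careful analysis of monochromatic connected matchings and a regularity-free absorbing gadget adapted to the $\alpha$-bounded setting). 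Without that lemma your argument stops at the point where the dependence on $n$ would otherwise creep back in. A secondary issue is that your iteration bound of $O(\alpha r\log(1/\theta))$ cycles to reach leftover $\theta n$ already gives $O(\alpha r\log(\alpha r))$ cycles only if each step removes a single cycle; but your density step extracts a cycle on $\Omega(n/(\alpha r))$ vertices, so the count of cycles after $O(\alpha r\log(\alpha r))$ rounds is $O(\alpha r\log(\alpha r))$, not $O((\alpha r)^2\log(\alpha r))$ --- the extra factor of $\alpha r$ in the final bound comes from the absorbing phase, which you have not carried out.
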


Of course, we cannot apply this directly to $G\sim \Gnp$ because $\alpha(G)$ is
w.h.p unbounded (unless $p$ is very close to 1). Instead, we will use it to
find cycle covers in a certain auxiliary graph. To turn the cycles from the
auxiliary graph into real cycles in $G$, we use a generalization of Hall's
criterion, due to Haxell \cite{haxell95}, for the existence of saturating matchings in hypergraphs (see Section \ref{sec:covering_small_set}). Given a family $\calE$ of subsets of some ground set $V$, the \emph{vertex cover
number} $\tau(\calE)$ is the smallest size of a set $X \subseteq V$
intersecting every set in $\calE$.

\begin{theorem}[Haxell {\cite{haxell95}}] \label{thm:hyp_matching}
Let $\{ H_i = (V, \calE_i) \}_{i \in \calI}$ be a family of $r$-uniform hypergraphs on the same vertex set, for some positive integer $r$. If $\tau(\bigcup_{i \in \calI'} \calE_i) > (2r - 1)(|\calI'| - 1)$ for every $\calI' \subseteq \calI$, then there is a family of hyperedges $\{e_i\}_{i \in \calI}$ such that $e_i \subseteq \calE_i$ and $e_i \cap e_j = \emptyset$ for every $i \neq j \in \calI$.
\end{theorem}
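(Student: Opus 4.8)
The statement is Haxell's hypergraph analogue of Hall's marriage theorem: we want a system of disjoint representatives, i.e.\ a choice of one edge $e_i\in\calE_i$ for each $i\in\calI$ with the $e_i$ pairwise disjoint. For $r=1$ the $H_i$ are just sets of singletons, $\tau(\bigcup_{i\in\calI'}\calE_i)$ counts the vertices appearing in them, and the hypothesis becomes exactly Hall's condition; so the plan is to run an $r$-uniform version of the augmenting-path proof of Hall's theorem, in which the factor $2r-1$ (rather than $1$) pays for the fact that re-routing one chosen edge now vacates up to $r$ vertices instead of a single one. I would argue by induction on $m:=|\calI|$. For $m=1$ the hypothesis with $\calI'=\{i\}$ gives $\tau(\calE_i)>0$, so $\calE_i\neq\emptyset$ and any edge works. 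For $m\ge 2$, fix $i_0\in\calI$ and apply the induction hypothesis to $\{H_i\}_{i\in\calI\setminus\{i_0\}}$ (its hypothesis is inherited, since subsets of $\calI\setminus\{i_0\}$ are subsets of $\calI$) to obtain pairwise disjoint edges $f_i\in\calE_i$ for $i\neq i_0$. If some edge of $\calE_{i_0}$ is already disjoint from $\bigcup_{i\neq i_0}f_i$ we are done; otherwise we must enlarge this partial system, which is where the work is.

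To do so, I would build an \emph{augmenting tree} $T$ of edges, alternating between candidate edges and currently chosen edges, just like the breadth-first tree in the proof of Hall's theorem. The root is an arbitrary $e_0\in\calE_{i_0}$; the children of a candidate edge $e$ with owner $i$ are the currently chosen edges $f_j$ (for owners $j$ not yet appearing in $T$) that meet $e$; and the children of such an $f_j$ are the other edges of $\calE_j$. Throughout the search one maintains a current SDR of $\calI\setminus\{i_0\}$, and whenever the search reaches a candidate edge disjoint from every currently chosen edge and from all vertices frozen above it in $T$, one re-routes recursively along $T$, freeing room to add an edge of $\calE_{i_0}$ and so producing the desired SDR of $\calI$.

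The decisive case is when the search terminates without ever augmenting. Let $\calI'$ be the set of owners appearing in $T$ (so $i_0\in\calI'$), and let $X$ be the union of the chosen edges of the owners in $\calI'\setminus\{i_0\}$ together with the blocking vertices that stopped each dead-end branch. Then every edge of $\bigcup_{i\in\calI'}\calE_i$ meets $X$: an edge of $\calE_{i_0}$ meets some chosen edge (else we would have augmented at the start), while an edge of $\calE_j$ with $j\in\calI'\setminus\{i_0\}$ that avoided $X$ would have been a legal re-routing, contradicting that its branch is a dead end. Hence $X$ is a vertex cover of $\bigcup_{i\in\calI'}\calE_i$, and a careful accounting charges at most $2r-1$ vertices to each of the $|\calI'|-1$ non-root owners — at most $r$ from its chosen edge and at most $r-1$ more from the blocking edge on its branch, the vertex through which the owner was first reached being counted only once — so $\tau(\bigcup_{i\in\calI'}\calE_i)\le(2r-1)(|\calI'|-1)$, contradicting the hypothesis. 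This closes the induction.

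The main obstacle is exactly this final accounting: one must set up the tree, and the order in which branches are explored, so that the frozen and blocking vertices genuinely form a vertex cover of the claimed size, with no vertex double-charged and each owner costing at most $2r-1$ rather than $2r$. Making this invariant precise is essentially the heart of Haxell's proof; everything else follows the standard alternating-path analysis behind Hall's theorem.
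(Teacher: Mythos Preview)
The paper does not prove this theorem: it is stated as a known result of Haxell \cite{haxell95} and used as a black box in Section~\ref{sec:covering_small_set}. There is therefore nothing in the paper to compare your proposal against.

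That said, your sketch is a fair outline of Haxell's original argument, and you correctly identify both the overall structure (induction on $|\calI|$, partial SDR, alternating/augmenting tree rooted at the missing index, terminal case yielding a small vertex cover) and the place where the real content lies, namely the bookkeeping that makes the $2r-1$ charge per non-root owner work. Two points to be careful about if you want to turn this into an actual proof. First, ``the children of $f_j$ are the other edges of $\calE_j$'' is too loose: one must restrict to edges that are disjoint from the vertices already frozen higher in the tree, or the cover $X$ you assemble at the end need not hit every edge of $\calE_j$. Second, the charging is a bit more delicate than ``$r$ from the chosen edge plus $r-1$ from the blocking edge'': in Haxell's argument one carefully orders the construction so that each newly added owner contributes exactly one chosen edge and at most $r-1$ additional blocking vertices, and one must check that the resulting $X$ really is a transversal of $\bigcup_{i\in\calI'}\calE_i$. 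Your proposal names this obstacle explicitly, so this is a caveat rather than a gap.
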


\subsection{A Ramsey-type lemma}

Next, let $K^m_k$ denote the complete $k$-partite graph with parts of size
$m$. The proof of Lemma \ref{lemma:smallset-cover} relies on the following
auxiliary result:

\begin{lemma} \label{lemma:gendfs}
  Let $m\geq 1$ and $k\geq 2$ be integers, and assume that $G$ is a graph
  whose complement is $K^m_k$-free. Then $G$ contains
  a collection of at most $k-1$ vertex-disjoint cycles covering all but
  at most $2k^2 m+k^3$ vertices.
\end{lemma}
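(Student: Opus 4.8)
The plan is to keep track throughout of the quantity $a(H)$, defined as the largest $a$ for which $\overline H\supseteq K^m_a$ — equivalently, the largest number of pairwise \emph{non-adjacent} $m$-sets in $H$, where two vertex sets are non-adjacent if $H$ has no edge between them. Thus ``$\overline G$ is $K^m_k$-free'' just says $a(G)\le k-1$, and I would actually prove the slightly cleaner statement that $a(G)\le a$ forces $G$ to have at most $a$ vertex-disjoint cycles covering all but $O(a^2m+a^3)$ vertices. First I would split $G$ into its connected components. Since $G$ has no edges between distinct components, picking an $m$-set inside each of $k$ different components of size $\ge m$ would give a $K^m_k$ in $\overline G$, so at most $a$ components have $\ge m$ vertices. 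Similarly, if the components smaller than $m$ had total size $\ge 2km$ one could (since each is smaller than $m$) greedily merge them into $k$ groups of size $\ge m$ each, again producing $K^m_k$; so those components have total size $<2km$ and can simply be discarded. The point of the reduction is that distinct components are pairwise non-adjacent, so their budgets add, $\sum_i a(C_i)\le a(G)$, and it suffices to cover each big component $C_i$ by at most $a(C_i)$ vertex-disjoint cycles while losing only $O(a(C_i)^2m+a(C_i)^3)$ of its vertices.

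For a single connected piece $H$ I would peel off one long cycle at a time by a generalized depth-first search. Ordinary DFS maintains the invariant that the set $S$ of finished vertices has no edge to the set $U$ of unexplored vertices while the current stack is a path $P$; stopping the search at a suitable moment produces a path $P$ with $S$ and $U$ small, because otherwise $S$, $U$, and $m$-sets taken from the other non-adjacent pieces already in hand would form too many pairwise non-adjacent $m$-sets, contradicting the bound on $a(G)$. Thus $P$ misses only $O(km)$ vertices of $H$. Writing $P=v_1\dots v_t$, I would close it into a cycle by finding a chord $v_iv_j$ with $i$ small and $j$ large, so that $v_iv_{i+1}\dots v_jv_i$ misses only $O(km)$ further vertices; such a chord must exist by the same Ramsey-type reasoning, since a path with only ``short'' chords can be cut into $k$ well-separated blocks of size $m$ that are pairwise non-adjacent, once more yielding $K^m_k$ in $\overline G$. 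For a robustly connected $H$ this already gives one cycle covering all but $O(km)$ vertices; in general I would iterate, extracting one cycle per ``non-adjacency unit'' of $H$, and finally sum up: at most $a\le k-1$ cycles are used in total, and the discarded vertices add up to at most $2k^2m+k^3$ after a short computation.

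The hard part will be making the iteration on a connected graph terminate with the right count. Deleting a single cycle from a connected graph need not decrease $a$ — two large cliques sharing one vertex have $a=2$ yet no single cycle covers more than about half of them — so one cannot naively ``remove a cycle and recurse.'' Instead, when the budget fails to drop I would argue that $H$ is forced to be ``thin'': repeatedly deleting minimum vertex cuts always leaves one large piece and a string of tiny, pairwise non-adjacent leftovers, and a graph with that property having more than roughly $2k^2m$ vertices would contain $k$ mutually far-apart $m$-sets, hence a $K^m_k$ in its complement; so such an $H$ is small and can be thrown away wholesale. Turning this dichotomy, and the accounting for the small cuts that accumulate, into the precise bound $2k^2m+k^3$ with at most $k-1$ cycles is the real content. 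By contrast the base case $a=1$ — a connected graph whose complement has no $K_{m,m}$ — is easy: it has independence number below $2m$, and after deleting a minimum cut together with at most $O(m)$ more vertices it becomes $(2m-1)$-connected, hence Hamiltonian by the Chv\'atal--Erd\H{o}s theorem.
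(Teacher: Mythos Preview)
Your high-level ingredients (iterated DFS, tracking pairwise non-adjacent $m$-sets, closing paths via chords) are the right ones and overlap with the paper's proof. But there is a genuine gap in the step where you turn each DFS path into a \emph{single} cycle.

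You argue that if the path $P=v_1\dots v_t$ has no chord $v_iv_j$ with $i$ small and $j$ large, then $P$ ``has only short chords'' and can be cut into $k$ pairwise non-adjacent $m$-blocks. The implication ``no single long chord $\Rightarrow$ all chords short'' is simply false: consider a Hamiltonian path threading through $k-1$ disjoint cliques of equal size. Its complement is $K_k^m$-free, it has chords of every intermediate length, yet no edge connects the first block to the last, and no single cycle covers more than one clique. So one DFS path may genuinely require $k-1$ cycles, not one. If you track the argument honestly --- at stage $i$ you have accumulated $S_1,\dots,S_{i-1}$ pairwise non-adjacent and non-adjacent to $V(P_i)$, so $a(G[V(P_i)])\le k-i$ --- you find that $P_i$ needs up to $k-i$ cycles, and the total is $\sum_{i=1}^{k-1}(k-i)=\binom{k}{2}$, not $k-1$. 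Your ``dichotomy'' paragraph essentially concedes that reducing this to $k-1$ is the real content, and it is not done; the sketch involving repeated minimum cuts and ``thin'' graphs does not lead anywhere concrete (and the Chv\'atal--Erd\H{o}s base case is both unnecessary and unjustified: deleting one minimum cut does not make a graph $(2m-1)$-connected).

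The paper's proof accepts the $\binom{k}{2}$ cycles as an intermediate step. It obtains them by a clean two-branch recursion: DFS yields $V(G)=D\cupdot V(P)\cupdot U$ with $|D|=m$ and $e(D,U)=0$; then $G[U]$ has its parameter reduced by one (recurse), while $G[V(P)]$ has a Hamiltonian path and is handled by a separate lemma that peels off $k-1$ cycles by repeatedly taking the first $m$ vertices and their farthest neighbour. The reduction from $\binom{k}{2}$ to $k-1$ cycles is a distinct merging step: whenever $k$ cycles of length $\ge m+k-1$ survive, pigeonhole forces two disjoint edges between some pair of them, which splices those two cycles into one. Short cycles are simply discarded. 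This merging idea is what your proposal is missing.
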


The case $k=2$ of Lemma \ref{lemma:gendfs} states that if a graph does not have a large bipartite hole then it contains a large cycle. This was proved with a slightly
smaller leftover by Krivelevich and Sudakov \cite{ks2012phasetransition}. We
remark that the number of cycles given by the lemma is best possible if one
wants a leftover that can be bounded by a function of $m$ and $k$. This can be
seen by considering the disjoint union of $k-1$ cliques of size $n/(k-1)$: the
complement of such a graph does not contain a $K_k^m$ for any $m\geq 1$ and yet
any collection of $k-2$ cycles must necessarily leave $n/(k-1)$ vertices
uncovered. Although it does not matter for the present paper, it would be
interesting to see how much the size of the leftover in Lemma
\ref{lemma:gendfs} can be reduced.

We start the proof of Lemma \ref{lemma:gendfs} with the following claim, which is
essentially already contained in \cite{ks2012phasetransition}, included here
for completeness:

\begin{claim}\label{claim:dfs}
  Let $G$ be a graph with at least $m\geq 1$ vertices. Then there is
  a (possibly empty) path $P$ in $G$ and subsets $D,U\subseteq V(G)$ such that
  $|D|=m$, $e(D,U) = 0$, and
  \[ V(G) = D\cupdot V(P)\cupdot U. \]
\end{claim}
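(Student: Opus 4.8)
The plan is to prove Claim~\ref{claim:dfs} by a depth-first search argument, exactly in the spirit of the DFS-based phase-transition proof of Krivelevich and Sudakov. First I would run a DFS on $G$, maintaining three sets of vertices: a set $U$ of vertices not yet explored, a stack $S$ (whose vertices, in order, form a path in $G$), and a set $D$ of vertices already processed and popped from the stack. The key invariant of DFS is that at every moment there are no edges between $D$ and $U$: when a vertex is popped into $D$, all its neighbours have already been discovered, hence none of them lie in $U$, and this property is preserved throughout the run. At the start $U = V(G)$ and $D = S = \emptyset$; at the end $U = S = \emptyset$ and $D = V(G)$, so since $|V(G)| \ge m$, at some intermediate step the set $D$ has grown to have size exactly $m$ (it changes by one vertex at a time). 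I would freeze the process at the first such moment.

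At that frozen moment, let $D$ be the (size-$m$) set of popped vertices, let $P$ be the path spanned by the current stack $S$ (with the empty path allowed if the stack happens to be empty at that step), and let $U$ be the remaining unexplored vertices. By the DFS invariant, $e(D,U) = 0$, and $D$, $V(P)$, $U$ partition $V(G)$. One small point to check is that the freezing moment can indeed be chosen so that the stack contents form a genuine path in $G$: at any step of a DFS the stack is a path (each vertex on the stack was pushed because it is adjacent to the vertex immediately below it), so this holds automatically. A second minor point: $|D|$ increments only when a vertex is popped, and it never decreases, so it takes every integer value between $0$ and $|V(G)|$; in particular it equals $m$ at some step, which is all we need.

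The main (very mild) obstacle is just to set up the DFS bookkeeping carefully enough that the three claimed properties are transparent; there is no hard inequality or probabilistic input here, and no appeal to the randomness of $G$ — the claim is purely deterministic and holds for every graph with at least $m$ vertices. I would write the DFS explicitly as an algorithm that at each step either (i) pushes onto the stack an arbitrary vertex of $U$ adjacent to the top of the stack, (ii) if no such vertex exists, pops the top of the stack into $D$, or (iii) if the stack is empty, moves an arbitrary vertex of $U$ onto the stack (starting a new DFS tree), and terminates when $U$ and the stack are both empty. Tracking $|D|$ across this run and stopping at the first step where $|D| = m$ yields the path $P = S$ and the sets $D$, $U$ with the required properties, completing the proof of the claim.
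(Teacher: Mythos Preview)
Your proposal is correct and follows essentially the same DFS argument as the paper: run depth-first search maintaining the triple $(D,P,U)$, observe the invariant $e(D,U)=0$, and freeze at the first step where $|D|=m$ (which exists since $|D|$ increases by one at a time from $0$ to $|V(G)|$). The paper's proof is the same, just with the algorithm spelled out as explicit state transitions (\textsc{Restart}, \textsc{Explore}, \textsc{Back-track}, \textsc{Terminate}).
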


\begin{proof}
  Let $G=(V,E)$ where $|V|\geq m$.
  To prove the claim, we analyze the depth-first-search (DFS)
  algorithm when run on $G$. The state at the $i$-th step of this
  algorithm can be described by disjoint sets $D_i,U_i \subseteq V$
  and a possibly empty path $P_i$ in $G$. The set $D_i$ contains the
  \emph{discarded} vertices from which the DFS has already back-tracked. The
  set $U_i$
  represents the set of \emph{unexplored} vertices, that is, of those vertices
  that have not yet been visited by the DFS algorithm. 
  The path $P_i$ contains the vertices that have been visited, but are not discarded yet.
  The initial state of the algorithm is $(D_0,P_0,U_0) =
  (\emptyset,(),V)$: all vertices are unexplored.
  Given the state $(D_i, P_i,U_i)$ at the $i$-th step, the next state
  $(D_{i+1}, P_{i+1}, U_{i+1})$ is obtained using the following rules:
  \begin{enumerate}
    \item (\textsc{Terminate}) If $P_i = ()$ and $U_i = \emptyset$:  \\
    The algorithm terminates (there is no next state).

  \item (\textsc{Restart}) If $P_i = ()$ and  $U_i \neq \emptyset$: \\
    Choose an arbitrary vertex $w\in U_i$ and set
      $(D_{i+1},P_{i+1},U_{i+1}) = (D_i, (w), U_i \setminus \{w\})$.
   
    \item (\textsc{Explore}) If $P_i = (v_1, \dots, v_\ell)$ is non-empty and
    $U_i\cap N(v_\ell)\neq\emptyset$:\\
      Choose an arbitrary vertex $w\in U_i\cap N(v_\ell)$;\\
      the next state is
      $(D_i,(v_1, \dots, v_{\ell}, w),  U_i \setminus \{w\})$.
  \item (\textsc{Back-track}) If $P_i = (v_1, \dots, v_\ell)$ is non-empty but
    $U_i\cap N(v_\ell)=\emptyset$:\\
  The next state is $(D_i \cup \{v_\ell\}, (v_1, \dots, v_{\ell-1}), U_i)$.
  \end{enumerate}

  Note that at every step of the algorithm, the sets $D_i, V(P_i), U_i$ form a
  partition of $V$. It is also easy to see that the algorithm eventually
  terminates -- indeed, the value $|D_i|-|U_i|$ increases by exactly one in
  each step and is capped at $|V|$. As the initial state is $(D_0,P_0,U_0) =
  (\emptyset, (), V)$ and the terminal state is $(V,(),\emptyset)$, and because in
  every round, $|D_i|$ can increase only by at most $1$, we see that there
  is a state $(D_i,P_i,U_i)$ where $|D_i|=m$ ($\leq |V|$).

  The required path is then $P=P_i$ and the required sets are $D=D_i$ and $U=U_i$.
  Note that since vertices are moved to $D_i$ only if they no longer
  have any neighbors in $U_i$, we have $e(D_i,U_i)=0$ for all $i\geq
  0$ and in particular $e(D,U) = 0$.
\end{proof}

It will be convenient to first prove Lemma \ref{lemma:gendfs} under the
additional assumption that the given graph $G$ contains a Hamiltonian path.

\begin{claim} \label{claim:dfs_hamil}
  Let $m\geq 1$ and $k \ge 2$ be integers, and let $G$ be a graph whose
    complement is $K_k^m$-free. If $G$ contains a Hamiltonian
    path, then $G$ contains a collection of at most $k-1$ vertex-disjoint cycles
    covering all but at most $km$ vertices of $G$.
\end{claim}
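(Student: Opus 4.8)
The plan is to prove the claim by induction on $k$, with the trivial case $k=1$ as the base: interpreting $K^m_1$ as the edgeless graph on $m$ vertices, ``$\overline G$ is $K^m_1$-free'' simply says $|V(G)|<m$, and then the empty family of cycles leaves all $|V(G)|<m\le km$ vertices uncovered. So fix $k\ge 2$ and a Hamiltonian path $v_1v_2\cdots v_n$ of $G$; if $n\le m$ we are again done by taking no cycles, so assume $n>m$.

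The key step is to peel off a single cycle from the front of the path so that the set $D:=\{v_1,\dots,v_m\}$ of the first $m$ path-vertices becomes a \emph{free part}, i.e.\ a set of exactly $m$ vertices having no $G$-edges to the rest of the graph; this is exactly what will let us drop the forbidden subgraph from $K^m_k$ to $K^m_{k-1}$ on the remainder. For $1\le i\le m$ let $f(i)$ be the largest index $j$ with $v_iv_j\in E(G)$; this is well defined and satisfies $f(i)\ge i+1$, since $v_iv_{i+1}$ is an edge of the path and $i<n$. Put $b:=\max_{1\le i\le m}f(i)$ and fix $i^\ast\le m$ with $f(i^\ast)=b$. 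By the choice of $b$ no vertex of $D$ has a neighbour of index larger than $b$, so, with $\Sigma:=\{v_{b+1},\dots,v_n\}$, we have $e_G(D,\Sigma)=0$; moreover $b\ge f(m)\ge m+1$, so $D$ and $\Sigma$ are disjoint and $V(G)=\{v_1,\dots,v_{i^\ast-1}\}\cupdot\{v_{i^\ast},\dots,v_b\}\cupdot\Sigma$. Now let $C_1$ be the cycle $v_{i^\ast}v_{i^\ast+1}\cdots v_bv_{i^\ast}$, which is well defined because $v_{i^\ast}v_b\in E(G)$ and $b>i^\ast$ (a genuine cycle, or a single edge if $b=i^\ast+1$, which the paper's conventions allow). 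It covers exactly $\{v_{i^\ast},\dots,v_b\}$, so the only vertices of $\{v_1,\dots,v_b\}$ left uncovered are $v_1,\dots,v_{i^\ast-1}$, at most $m-1$ of them.

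It remains to cover $\Sigma$. The induced subgraph $G[\Sigma]$ has the Hamiltonian path $v_{b+1}\cdots v_n$, and its complement is $K^m_{k-1}$-free: if $A_2,\dots,A_k\subseteq\Sigma$ were pairwise disjoint $m$-sets with no $G$-edge between distinct ones, then $D,A_2,\dots,A_k$ would be $k$ pairwise disjoint $m$-sets with no $G$-edge between distinct ones (using $e_G(D,\Sigma)=0$ and $D\cap\Sigma=\emptyset$), that is, a copy of $K^m_k$ in $\overline G$, contradicting the hypothesis. By the induction hypothesis, $G[\Sigma]$ thus admits at most $(k-1)-1=k-2$ vertex-disjoint cycles covering all but at most $(k-1)m$ of its vertices. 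Adjoining $C_1$, which is vertex-disjoint from all of these because $V(C_1)\cap\Sigma=\emptyset$, gives a family of at most $k-1$ vertex-disjoint cycles whose uncovered set lies within $\{v_1,\dots,v_{i^\ast-1}\}$ together with the uncovered part of $\Sigma$; hence it has size at most $(m-1)+(k-1)m\le km$, as required.

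I do not expect a serious obstacle in this argument --- it is a single short induction. The only slightly delicate points are the degenerate cases ($n\le m$, and $C_1$ possibly reducing to a single edge) and the verification that the free part $D$ is both disjoint from and non-adjacent to $\Sigma$; both are immediate from the definition of $b$ and the presence of the path edges $v_iv_{i+1}$. (The base of the induction is, in essence, the Krivelevich and Sudakov argument for graphs without a large bipartite hole.) The real difficulty lies not here but in the reduction of the general Lemma~\ref{lemma:gendfs} to this Hamiltonian-path version, which will presumably go through Claim~\ref{claim:dfs}: use DFS to find a long path in $G$ and apply the present claim to the graph it spans.
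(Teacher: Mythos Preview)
Your proof is correct and follows essentially the same approach as the paper's: both peel off a cycle by finding the largest path-index reachable from the first $m$ vertices, then apply induction to the tail using the fact that $D=\{v_1,\dots,v_m\}$ has no edges into it. The only cosmetic differences are that you take $k=1$ as the base case (extending the interpretation of $K_1^m$) whereas the paper handles $k=2$ directly, and you make the cycle $C_1=v_{i^\ast}\cdots v_b v_{i^\ast}$ explicit while the paper simply asserts that $G[\{v_1,\dots,v_b\}]$ contains a cycle missing at most $m$ vertices.
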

\begin{proof}
  We prove the claim by induction on $k$. Let $n=|V(G)|$. We may assume $n > km$, as
  otherwise the statement is trivially satisfied. Let $P = (v_1, \dots, v_n)$
  be a Hamiltonian path in $G$. 

  In the base case $k = 2$, let $S_1 = \{v_1, \dots, v_m\}$ and $S_2 = \{v_{n -
  m + 1}, \dots, v_n\}$ be the sets containing the first and the last $m$
  elements of the path $P$, respectively. 
  As $n>2m$, the sets $S_1$ and $S_2$ are disjoint.
  Since the complement of $G$ does not
  contain a complete bipartite graph $K_2^m$, there must be an edge between
  $S_1$ and $S_2$. Together with $P$, such an edge forms a cycle containing all
  but at most $2m$ vertices.

  Let us now assume that the claim holds for all $k'< k$, for some $k \ge 3$.
  Consider the set $S_1 = \{v_1, \dots, v_m\}$ consisting of the first $m$
  vertices of the path $P$. Let $m< i\leq n$ be the largest index $i$ such that
  $v_i$ has a neighbor in $S_1$, and let $S=\{v_1,\dots,v_i\}$.
  Then $G[S]$ clearly contains a cycle that covers all but at most $m$ vertices from $S$.
  Moreover, there is no edge between $S_1$ and $S_2 = V(P)\setminus S= \{v_{i+
  1}, \dots, v_n\}$, and so, as the complement of $G$ is $K_k^m$-free, the
  complement of
  $G[S_2]$ is $K_{k-1}^m$-free. By induction $G[S_2]$ contains $k-2$
  vertex-disjoint cycles that cover all but at most $(k-1)m$ vertices from
  $S_2$. In total, there are $k-1$ vertex-disjoint cycles that cover all but
  at most $km$ vertices from $S\cup S_2=V(G)$.
\end{proof}

Next, we have the following simple observation:

\begin{claim}\label{claim:twoedges}
  Let $m\geq 1$ and $k \ge 2$ be integers, and let $G$ be a graph whose
  complement is $K_k^m$-free. Let $S_1,\dots,S_k\subseteq V(G)$ be disjoint sets
  of size at least $m+k-1$. Then for some $i\ne j$ there are two disjoint edges between $S_i$ and $S_j$.
\end{claim}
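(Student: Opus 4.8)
The plan is to argue by contradiction: suppose that for every pair $i\ne j$ there is \emph{at most one} edge between $S_i$ and $S_j$. I will use these few edges, together with the $K_k^m$-freeness of the complement, to derive a contradiction. The rough idea is that ``almost all'' of each $S_i$ should be completely non-adjacent to ``almost all'' of each $S_j$, so after deleting a bounded number of vertices from each $S_i$ we can extract sets of size $\geq m$ that pairwise span no edges at all — i.e.\ a $K_k^m$ in the complement of $G$.

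First I would make this precise. Fix $i\in[k]$. For each $j\ne i$, the assumption gives at most one edge between $S_i$ and $S_j$, hence at most one vertex of $S_i$ is incident to that edge; deleting these (at most $k-1$) vertices from $S_i$ for all $j\ne i$ yields a subset $S_i'\subseteq S_i$ with $|S_i'|\geq (m+k-1)-(k-1)=m$, and moreover $S_i'$ sends no edge to $S_j$ for any $j\ne i$. Doing this simultaneously for every $i$ might seem circular (deleting from $S_i$ to kill the $S_i$–$S_j$ edge, then again to kill the $S_j$–$S_i$ edge), but each edge between $S_i$ and $S_j$ has exactly one endpoint in $S_i$ and one in $S_j$, so it suffices to delete one endpoint of \emph{each} of the at most $\binom{k}{2}$ edges; distributing, each $S_i$ loses at most $k-1$ vertices. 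Pick the subsets $S_i'\subseteq S_i$ of size exactly $m$ accordingly. Then for $i\ne j$ there is no edge of $G$ between $S_i'$ and $S_j'$, so $S_1'\cupdot\dots\cupdot S_k'$ induces a copy of $K_k^m$ in the complement of $G$, contradicting the hypothesis.

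The only point that needs a little care — and the step I expect to be the mild obstacle — is the bookkeeping of how many vertices get removed from each $S_i$: one must be sure that killing all $\leq\binom{k}{2}$ cross-edges costs at most $k-1$ vertices per part, which follows because the edges incident to a \emph{fixed} $S_i$ number at most $k-1$ (one per other part), so deleting one appropriately chosen endpoint per edge removes at most $k-1$ vertices from $S_i$. With the bound $|S_i|\geq m+k-1$ this leaves $\geq m$ vertices, which is exactly what is needed. Everything else is immediate.
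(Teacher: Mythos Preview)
Your overall strategy is sound and is in fact cleaner than the paper's argument, but there is one genuine slip: the negation of ``for some $i\ne j$ there are two \emph{disjoint} edges between $S_i$ and $S_j$'' is \emph{not} ``for every $i\ne j$ there is at most one edge between $S_i$ and $S_j$''. The correct negation is that for every $i\ne j$ the bipartite graph $G[S_i,S_j]$ contains no matching of size~$2$. This still gives you what you need: a bipartite graph with no two disjoint edges is a (possibly empty) star, so there is a single vertex $v_{ij}\in S_i\cup S_j$ covering all $S_i$--$S_j$ edges. Now delete each $v_{ij}$ from whichever part contains it; a fixed $S_i$ loses at most $k-1$ vertices (at most one per pair $\{i,j\}$), so $|S_i'|\ge m$, and for every $i\ne j$ all $S_i$--$S_j$ edges have been destroyed. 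The rest of your argument goes through verbatim and yields the $K_k^m$ in the complement.

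For comparison, the paper argues differently: it repeatedly uses the $K_k^m$-freeness of the complement to find an edge between two of the current parts, deletes both endpoints, and iterates until some $S_i$ drops below size $m$. At that point $k$ pairwise disjoint edges have been removed from $S_i$, and pigeonhole forces two of them to go to the same $S_j$. Your approach is more direct and avoids the iteration; the paper's approach has the mild advantage that it never explicitly invokes the star structure of matchings of size at most one.
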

\begin{proof}
  The fact that the complement of $G$ does not contain a $K_k^m$ means that for every choice of
  disjoint subsets $S_1',\dots,S_k'$ of size at least $m$ each, we can find an edge going
  from $S_i'$ to $S_j'$ for some $i\ne j$.
  So let us start with $S_1,\dots,S_k$ of size at least $m+k-1$, find such an edge, and remove its endpoints
  from the sets $S_i$ and $S_j$. As long as the remaining sets (with the endpoints removed) still have size
  at least $m$, we can repeat this procedure. Eventually some $S_i$ will only have $m-1$ vertices left.
  This means that we have removed $k$ disjoint edges touching $S_i$, each going to some $S_j$ with $i\ne j$.
  Then by the pigeonhole principle, two of these edges must go to the same $S_j$.
\end{proof}

\begin{proof}[Proof of Lemma~\ref{lemma:gendfs}]
  Fix a graph $G=(V,E)$ on $n=|V|$ vertices and suppose that the complement of
  $G$ does not contain a copy of $K_k^m$.
  We start by proving a slightly weaker statement:
  \begin{equation}\label{eq:square}
    \text{$G$ contains $\tbinom{k}{2}$ vertex-disjoint cycles covering
    all but at most $k^2m$ vertices.}
  \end{equation}
  The proof of \eqref{eq:square} is by induction on $k$. 
  We first prove the base case $k=2$. By
  Claim~\ref{claim:dfs}, we can find a path $P$ and sets $D,U\subseteq V$ such that
  $|D|=m$, $e(D,U)=0$, and $V=D\cupdot V(P)\cupdot U$. Since
  the complement of $G$ is $K_2^m$-free, it follows that $|U|<m$, and so
  $|V(P)|\geq n-|D|-|U|> n-2m$.
  Now $G[V(P)]$ is a graph that contains a Hamiltonian path, and whose complement is $K_2^m$-free. 
  Thus, by Claim \ref{claim:dfs_hamil}, it contains a cycle covering all but
  at most $2m$ vertices of $G[V(P)]$, i.e., all but at most $4m$ vertices of $G$.
  This completes the proof for $k=2$.

  For the induction step, assume that $k\geq 3$ and that \eqref{eq:square} holds for
  $k-1$. Using Claim~\ref{claim:dfs} we find a path $P$
  and sets $D,U$ such that $|D|=m$, $e(D,U)=0$, and $V=D\cupdot
  V(P)\cupdot U$. As the complement of $G$ is $K_k^m$-free, and since there are
  no edges going between $D$ and $U$, the complement of $G[U]$ is
  $K_{k-1}^m$-free. By induction, there is a collection of $\binom{k-1}{2}$
  vertex-disjoint cycles in $G[U]$ that covers all but at most
  $(k-1)^2m$ vertices from
  $U$. Moreover, as $G[V(P)]$ is a graph with a Hamiltonian path whose complement is
  $K_{k}^m$-free, we can use Claim \ref{claim:dfs_hamil} to obtain $k-1$
  vertex-disjoint cycles in $G[V(P)]$ covering all but at most $km$ vertices of
  $G[V(P)]$. In total, we have a collection of $k-1+\binom{k-1}{2}= \binom{k}{2}$
  vertex-disjoint cycles in $G$ covering all but at most $(k-1)^2m+km+m\le
  k^2m$ vertices, establishing \eqref{eq:square}.
  
  All in all, we have collection of $\binom{k}{2}$ vertex-disjoint cycles
  covering all but at most $k^2m$ vertices.
  We reduce the size of this collection as follows.
  If the collection contains a cycle shorter than $m+k-1$, we
  remove this cycle from the collection, increasing the number of uncovered
  vertices by at most $m+k-1$. Otherwise, if there are at least $k$
  cycles $C_1,\dots,C_k$ of length at least $m+k-1$ left, then we apply Claim \ref{claim:twoedges} to 
  a set $S_i$ of $m+k-1$ consecutive vertices from each cycle $C_i$. 
  The claim provides two edges that merge two of these cycles into a single
  one, while increasing the size of the leftover by at most $2(m+k-1)$.
  If we repeat this until the collection contains at most $k-1$ cycles, then
  we end up with $k-1$ vertex-disjoint cycles covering all except at most
  $k^2m + \binom{k}{2}2(m+k-1) \leq 2k^2m+k^3$ vertices, as required.
\end{proof}

\subsection{Properties of random graphs}

In this section we collect some properties of random graphs that are used
throughout the proof. The following Chernoff-type bounds on the tails of the
binomial distribution are used throughout.

\begin{lemma}[{\cite[Theorem 2.1]{janson2011random}}]\label{lem:che}
  Let $X \sim \emph{\text{Bin}}(n,p)$ be a binomial random variable. Then
  \begin{itemize}
    \item $\Pr\left[X<(1-a)np\right]<e^{-a^2np/2}$ for every $a>0$, and
    \item $\Pr\left[X>(1+a)np\right]<e^{-a^2np/3}$ for every $0<a<3/2$.
  \end{itemize}
\end{lemma}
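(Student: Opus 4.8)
The plan is to apply the standard exponential-moment (Chernoff) method. Write $X=\sum_{i=1}^n X_i$ as a sum of independent Bernoulli$(p)$ variables and set $\mu=np$. For the upper tail, fix $t>0$ and apply Markov's inequality to $e^{tX}$:
\[
  \Pr[X\ge(1+a)\mu]=\Pr\big[e^{tX}\ge e^{t(1+a)\mu}\big]\le e^{-t(1+a)\mu}\,\Exp[e^{tX}].
\]
By independence and the inequality $1+x\le e^x$ we have $\Exp[e^{tX}]=(1+p(e^t-1))^n\le e^{\mu(e^t-1)}$, so the right-hand side is at most $\exp\big(\mu(e^t-1-t(1+a))\big)$. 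Optimizing over $t$ gives $t=\ln(1+a)$ and hence
\[
  \Pr[X\ge(1+a)\mu]\le\exp\big(-\mu\,((1+a)\ln(1+a)-a)\big).
\]
It then remains to check the scalar inequality $(1+a)\ln(1+a)-a\ge a^2/3$ for $0<a<3/2$; this is a one-variable calculus exercise (the two sides agree to second order at $a=0$, and one analyzes the derivative of the difference on the interval).

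For the lower tail the computation is symmetric: for $t>0$,
\[
  \Pr[X\le(1-a)\mu]\le e^{t(1-a)\mu}\,\Exp[e^{-tX}]\le\exp\big(\mu(e^{-t}-1+t(1-a))\big),
\]
and choosing $t=-\ln(1-a)$ (positive for $0<a<1$) yields $\Pr[X\le(1-a)\mu]\le\exp\big(-\mu\,(a+(1-a)\ln(1-a))\big)$. One then verifies $a+(1-a)\ln(1-a)\ge a^2/2$ for $0<a<1$, again a routine scalar estimate (the difference and its first derivative vanish at $a=0$ and its second derivative is nonnegative). For $a\ge1$ the event $\{X<(1-a)np\}$ is empty, so the bound holds trivially; this is why the lower-tail statement needs no upper restriction on $a$.

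The only mildly delicate point is the scalar inequality governing the upper tail — specifically, checking that the constant $3$ in the exponent is still valid all the way up to $a=3/2$ rather than only for $a\le1$, which is exactly where the hypothesis $a<3/2$ enters (the function $(1+a)\ln(1+a)-a-a^2/3$ is not monotone, so one needs the endpoint values and a sign analysis of its derivative, not merely convexity). Everything else is the textbook Bernstein/Chernoff argument, and indeed one could simply cite it, as the paper does.
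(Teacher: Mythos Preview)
Your proof is correct and follows the standard exponential-moment/Chernoff argument. The paper itself does not prove this lemma: it is stated with a citation to \cite[Theorem 2.1]{janson2011random} and used as a black box throughout. So there is no ``paper's own proof'' to compare against; your write-up simply supplies the textbook derivation that the authors chose to cite rather than reproduce.
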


The next result says that w.h.p a random graph contains approximately the expected number of edges between any two sufficiently large vertex sets.

\begin{lemma} \label{lemma:densityXY}
  Fix $0< \alpha, \beta < 1$
  and let $C=6/(\alpha^2\beta)$ and
  $D=9/\alpha^2$. Then for every $p = p(n) \in (0,1)$, the random
  graph $G \sim \Gnp$ satisfies the following property w.h.p: For any two disjoint
  subsets $X,Y\subseteq V(G)$ satisfying either of
  \begin{enumerate}
    \item $|X|, |Y| \ge D(\log n)/ p$, or
    \item $|X| \ge C / p$ and $|Y| \ge \beta n$,
  \end{enumerate}
  we have
  \[
    e(X,Y) \in (1 \pm \alpha) |X| |Y| p.
  \]
\end{lemma}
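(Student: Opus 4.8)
The plan is to fix the extremal sizes and use a union bound over all pairs $(X,Y)$. First I would observe that it suffices to prove the statement for sets $X$, $Y$ of the \emph{minimum} allowed sizes: if the conclusion $e(X,Y)\in(1\pm\alpha)|X||Y|p$ holds for all such minimal pairs, then for larger sets one can partition $X$ and $Y$ into blocks of the minimal size (plus a negligible remainder that can be absorbed, or handled by a slight strengthening of $\alpha$) and sum the estimates. Alternatively, and more cleanly, one proves the two-sided bound directly for arbitrary pairs of the given sizes; the union-bound calculation is essentially the same. So fix $a,b$ with either (i) $a=b=\lceil D(\log n)/p\rceil$, or (ii) $a=\lceil C/p\rceil$ and $b=\lceil\beta n\rceil$, and consider a fixed pair of disjoint sets $X,Y$ with $|X|=a$, $|Y|=b$.

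For such a fixed pair, $e(X,Y)\sim\mathrm{Bin}(ab,p)$ with mean $\mu=abp$, so Lemma \ref{lem:che} gives
\[
  \Pr\bigl[e(X,Y)\notin(1\pm\alpha)abp\bigr] < 2e^{-\alpha^2 abp/3}.
\]
The number of pairs $(X,Y)$ to consider is at most $\binom{n}{a}\binom{n}{b}\le n^{a+b}=e^{(a+b)\log n}$. Hence the probability that \emph{some} pair violates the bound is at most $2\exp\bigl((a+b)\log n-\alpha^2abp/3\bigr)$, and it remains to check that the exponent tends to $-\infty$ in each of the two regimes. In regime (i), $a=b\ge D(\log n)/p$ with $D=9/\alpha^2$, so $\alpha^2abp/3\ge \alpha^2 a\cdot D(\log n)/3 = 3a\log n \ge (a+b)\log n+a\log n$, which gives an exponent at most $-a\log n\to-\infty$. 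In regime (ii), $a\ge C/p$ with $C=6/(\alpha^2\beta)$ and $b\ge\beta n$, so $\alpha^2 abp/3\ge \alpha^2 b\cdot C/3 = 2b/\beta\ge 2n \ge (a+b)\log n + n$ for $n$ large (using $a\le n$, $b\le n$, so $(a+b)\log n\le 2n\log n$ — wait, this needs $b$ large enough to dominate $a\log n$ as well). Let me restate: in regime (ii) one uses $\alpha^2 abp/3\ge \alpha^2\cdot(C/p)\cdot b\cdot p/3 = (\alpha^2 C/3)\,b = 2b/\beta \ge 2n$, while $(a+b)\log n\le 2n\log n$; this is not yet enough, so instead bound the number of pairs more carefully by $\binom{n}{a}\binom{n}{b}\le (en/a)^a(en/b)^b$ and note $a=\Theta(1/p)$ while $abp=\Theta(b/(\ldots))$; concretely $\log\binom{n}{a}\le a\log(en/a)\le a\log n = O((\log n)/p)$ and $\log\binom{n}{b}\le b\log(en/b)\le b\log(e/\beta)=O(n)$, so $(a+b)$-type term is $O((\log n)/p)+O(n)=O(n)$, whereas $\alpha^2abp/3=\Omega(n\cdot(\log n)/p\cdot p)$ — hmm, one must simply track that $abp\gg$ both $a\log(en/a)$ and $b\log(en/b)$.

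Concretely: in regime (ii), $abp\ge (C/p)\cdot b\cdot p=Cb$, and we need $Cb\cdot\alpha^2/3=2b\ge a\log(en/a)+b\log(en/b)$ with room to spare. Since $a\le C/p+1$ and (as we only care about the range where the lemma is non-vacuous, $p$ may be as small as a constant times $(\log n/n)^{1/2}$, but even for $p$ constant $a=O(1)$ so $a\log(en/a)=O(\log n)$), while $b\ge\beta n$ so $b\log(en/b)\le b\log(e/\beta)$. Choosing a slightly larger constant than $C=6/(\alpha^2\beta)$ if needed — say verifying the inequality $2b/\beta\ge a\log(en/a)+b\log(en/b)+\log n$ holds for all $n$ large — the exponent is $\le-\log n\to-\infty$. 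Similarly for regime (i) the computation above already gives an exponent $\le-a\log n\to-\infty$.

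The main obstacle is purely bookkeeping: making the constants $C=6/(\alpha^2\beta)$ and $D=9/\alpha^2$ exactly large enough that the Chernoff exponent beats the entropy term $\log\binom{n}{a}+\log\binom{n}{b}$ with a surplus that still tends to infinity, uniformly over the full range of $p$ (so that the ``w.h.p.'' has no hidden dependence on $p$). Once the inequality ``$\alpha^2abp/3\ge \log\binom{n}{a}+\log\binom{n}{b}+\omega(1)$'' is verified in both regimes, the union bound closes the argument. No heavy machinery is needed beyond Lemma \ref{lem:che} and elementary estimates on binomial coefficients.
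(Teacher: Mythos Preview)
Your approach---Chernoff plus a union bound over all pairs $(X,Y)$---is exactly what the paper does, and your regime~(i) calculation is essentially correct. The difficulty is entirely in regime~(ii), where your bookkeeping goes off the rails.

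The specific problem is the entropy bound. You start with $\binom{n}{a}\binom{n}{b}\le n^{a+b}$, which for $b\ge\beta n$ gives an entropy of order $n\log n$, far larger than the Chernoff exponent $\alpha^2 abp/3\ge 2n$; you correctly notice this fails. Your attempted recovery via $\binom{n}{b}\le (en/b)^b\le (e/\beta)^b$ can in fact be pushed through with the stated constant $C=6/(\alpha^2\beta)$ (one checks $2/\beta>\log(e/\beta)$ for all $\beta\in(0,1)$ and handles the $\binom{n}{a}$ term separately), but your write-up never actually completes this, and the remark ``choosing a slightly larger constant than $C=6/(\alpha^2\beta)$ if needed'' is a red flag: the lemma fixes $C$ exactly, so you are not free to enlarge it. The paper avoids this entire mess with the crude bound $\binom{n}{x}\binom{n}{y}\le 2^{2n}$: then the entropy is $2n\ln 2\approx 1.39n$, while the Chernoff exponent is $\alpha^2 C\beta n/3=2n$, and $2>2\ln 2$ does the job immediately, summed over at most $n^2$ choices of $(|X|,|Y|)$.

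Also, drop the reduction to minimal sizes. The partition-into-blocks idea loses in the constants (you would need to prove the minimal case with $\alpha/2$, say, to recover $\alpha$ for larger sets), and you yourself note that summing over all sizes is cleaner---so just do that from the start, as the paper does.
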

\begin{proof}
  For fixed sets $X$ and $Y$, the quantity $e(X,Y)$ follows the binomial
  distribution $\Bin(|X||Y|,p)$, so we can apply Lemma \ref{lem:che} to get 
  \[\Pr\big[e(X, Y) \notin (1 \pm \alpha)|X||Y|p\big] \le 2e^{-\alpha^2|X||Y|p/3}.\]
  Hence the probability that there exist sets $X,Y\subseteq V(G)$ such that
  $|X|\ge |Y| \ge  D(\log n)/p$ and $e(X,Y)\notin (1\pm\alpha)|X||Y|p$ is at most
 \begin{align*}
    \sum_{\frac{D \log n }{p}\le y\le x\le  n}
    \binom{n}{x} \binom{n}{y} 2e^{- \alpha^2 x y p/3}
    &\le \sum_{\frac{D \log n }{p}\le y\le x\le n}
    e^{2x \log n} e^{-x \cdot \alpha^2 D( \log n)/3} \\
    &\le  \sum_{\frac{D \log n }{p}\le y\le x\le n} e^{-x\log n} \le n^2
    e^{-\log^2 n} = o(1),
  \end{align*}
  using $D= 9/\alpha^2>1$. Similarly, the probability that there are subsets
  $X,Y\subseteq V(G)$ such that$|X|\ge C/p$, $|Y|\ge \beta n$, and
  $e(X,Y) \notin (1 \pm \alpha) |X| |Y| p$
  is at most
  \[
    \sum_{x = C / p}^n \sum_{y = \beta n}^n  \binom{n}{x} \binom{n}{y} 2e^{- \alpha^2 x y p/3} \le 
  \sum_{x = C / p}^n  \sum_{y = \beta n}^n 2^{2n} e^{-\alpha^2 C \beta n/3} = o(1),
\]
  for $C\ge 6/\alpha^2\beta$.
\end{proof}

The following lemma studies how the common neighborhoods of given vertex pairs
intersect an arbitrary set.

\begin{lemma} \label{lemma:density_triples}
  For every $p = p(n) \in (0,1)$, the random graph $G \sim \Gnp$ satisfies
  the following property w.h.p: For every family $\calL$ of $\ell$ disjoint
  pairs of vertices, and for every set $Y$ of $3\ell$ vertices that is disjoint
  from
  each pair in $\calL$, we have
  \[
    \sum_{\{v,w\} \in \calL} |N(v, Y) \cap N(w, Y)| \le \begin{cases}
      72 \ell \log n &\text{if } \ell \le 6 \log n / p^2, \\
                2\ell|Y| p^2 &\text{otherwise.}
    \end{cases}
  \]
\end{lemma}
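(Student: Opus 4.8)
The plan is to apply a union bound over all choices of $\calL$ and $Y$, using Chernoff-type bounds on the sum $X_{\calL,Y} := \sum_{\{v,w\}\in\calL}|N(v,Y)\cap N(w,Y)|$. First I would observe that for a \emph{fixed} family $\calL$ of $\ell$ disjoint pairs and a fixed set $Y$ of $3\ell$ vertices disjoint from all pairs, the quantity $X_{\calL,Y}$ is a sum of $3\ell^2$ independent indicator variables: for each pair $\{v,w\}\in\calL$ and each $y\in Y$, the event ``$y\in N(v)\cap N(w)$'' has probability $p^2$ and depends only on the two edges $vy$ and $wy$; since the pairs in $\calL$ are disjoint and $Y$ avoids them, all these pairs of edges are distinct, so the indicators are mutually independent. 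Hence $X_{\calL,Y}\sim\Bin(3\ell^2,p^2)$ with mean $\mu = 3\ell^2 p^2$.

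Next I would split into the two regimes according to the size of $\ell$ relative to $6\log n/p^2$. In the \emph{large} regime $\ell > 6\log n/p^2$ we want to show $X_{\calL,Y}\le 2\ell|Y|p^2 = 6\ell^2 p^2 = 2\mu$. The upper tail bound from Lemma~\ref{lem:che} gives $\Pr[X_{\calL,Y} > 2\mu] < e^{-\mu/3} = e^{-\ell^2 p^2}$. To take the union bound, the number of ways to choose $\calL$ (an ordered-then-unordered collection of $\ell$ disjoint pairs) and $Y$ is at most $n^{2\ell}\cdot n^{3\ell} = n^{5\ell}$, so the failure probability is at most $\sum_{\ell > 6\log n/p^2} n^{5\ell} e^{-\ell^2 p^2}$. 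Here I would use $\ell^2 p^2 = \ell\cdot(\ell p^2) > \ell\cdot 6\log n$, so the exponent beats $5\ell\log n$ with room to spare, and the sum is $o(1)$.

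In the \emph{small} regime $\ell \le 6\log n/p^2$ the target bound $72\ell\log n$ may be much larger than the mean $\mu = 3\ell^2 p^2 \le 18\ell\log n$, so a direct relative-error Chernoff bound is not tight enough; instead I would use the standard fact that for $X\sim\Bin(N,q)$ and any $t\ge 2e\,\Exp X$ one has $\Pr[X\ge t]\le 2^{-t}$ (or the Chernoff bound in the form $\Pr[X\ge t]\le (e\mu/t)^t$). Applying this with $t = 72\ell\log n \ge 4\mu$ (using $\mu\le 18\ell\log n$) gives, say, $\Pr[X_{\calL,Y}\ge 72\ell\log n]\le (e/4)^{72\ell\log n}\le n^{-10\ell}$, which comfortably beats the $n^{5\ell}$ choices of $(\calL,Y)$; summing over $1\le\ell\le 6\log n/p^2$ again gives $o(1)$.

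The main obstacle I anticipate is purely bookkeeping: making the union-bound count of the number of admissible pairs $(\calL, Y)$ precise (and small enough), and choosing the threshold-style Chernoff inequality in the small regime with constants that actually yield $72\ell\log n$ while still leaving a factor to kill the $n^{O(\ell)}$ from the union bound. There is no conceptual difficulty — the independence structure is clean — so the proof should amount to two applications of Lemma~\ref{lem:che} (or its threshold-form consequence) followed by a routine summation over $\ell$.
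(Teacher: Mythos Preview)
Your proposal is correct and essentially identical to the paper's own proof: the paper also observes that the sum is $\Bin(3\ell^2,p^2)$, applies the Chernoff bound of Lemma~\ref{lem:che} in the large-$\ell$ regime to get the $e^{-\ell^2 p^2}$ tail, uses the bound $\Pr[X\ge t]\le (e\mu/t)^t$ (written there as $\binom{3\ell^2}{t}(p^2)^t$) in the small-$\ell$ regime to obtain $(e/4)^{72\ell\log n}\le e^{-6\ell\log n}$, and finishes with the same union bound over at most $\binom{n^2}{\ell}\binom{n}{3\ell}\le n^{5\ell}$ choices of $(\calL,Y)$.
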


\begin{proof}
  Fix sets $\calL$ and $Y$ as in the statement of the lemma. Since the
  pairs in $\calL$ are pairwise disjoint and disjoint from $Y$, the
  sum 
  \[
    Q(\calL, Y) = \sum_{\{v,w\} \in \calL} |N(v, Y) \cap N(w, Y)|
  \]
  has the same distribution as a sum of $\ell |Y|=3\ell^2$ independent
  Bernoulli random variables with probability $p^2$. In other words, $Q(\calL,
  Y)\sim \Bin(3\ell^2,p^2)$. Therefore, for $\ell \le 6 \log n / p^2$ we have
  \begin{align*}
    \Pr [ Q(\calL,Y) > 72\ell \log n] &\le  \binom{3\ell^2}{72 \ell\log n}
    (p^2)^{72\ell\log n} \le \Big( \frac{e\cdot  3\ell^2
    p^2}{72\ell \log n }\Big)^{72\ell\log n} \\
    &\le  (e/4
    )^{72\ell \log n} \le e^{-6\ell \log n},
  \end{align*}
  where the last inequality follows from $(e/4)^{12} \le 1/e$. Otherwise, if
  $\ell > 6 \log n / p^2$ then we can apply the Chernoff bound (Lemma
  \ref{lem:che}) to get
  \[
    \Pr[ Q(\calL,Y) > 2\ell|Y|p^2] \le e^{-\ell|Y|p^2/3} = e^{-\ell^2p^2} \le
    e^{- 6 \ell\log n }.
  \]
  Taking a union-bound over all choices of $\calL$ and $Y$, we obtain that the
  probability that for some $\calL$ and $Y$ the desired upper bound does not
  hold is at most \[
    \sum_{\ell = 1}^{n} \binom{n^2}{\ell} \binom{n}{3\ell} e^{-6\ell \log n}  \le
    \sum_{\ell = 1}^{n} n^{2\ell + 3\ell} e^{-6 \ell \log n} = \sum_{\ell=1}^n
    n^{-\ell} \to 0,
  \]
  completing the proof.
\end{proof}

Our last tool shows that the common neighborhoods of not too many distinct small sets are close to disjoint.

\begin{lemma}\label{lemma:tuples_expand}
  Let $r \ge 2$ be a fixed integer and let $\teps=\teps(n)
  \in(0,1)$. Let $p=p(n)\in (0,1)$ and let $L\subseteq[n]$ be a fixed set of at least
  $50 r \log n/(\teps p^r)$ vertices. Then $G\sim\Gnp$ w.h.p satisfies the
  following property: For every family of at
  most $\teps/p$ different sets $X_1,\dots,X_t\subseteq [n]\setminus L$ of size $r$,
  we have
  \[ \Big| \bigcup_{i=1}^t \CN(X_i,L) \Big| \in (1 \pm \sqrt{\teps}) t |L| p^r. \]
\end{lemma}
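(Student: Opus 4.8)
The plan is to fix an admissible family $X_1,\dots,X_t$ — that is, $t\le\teps/p$ distinct $r$-element subsets of $[n]\setminus L$ — verify the desired estimate for this fixed family with probability $1-o(n^{-rt})$, and then conclude with a union bound over all such families, of which there are at most $\binom{n}{r}^{t}\le n^{rt}$.

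The key observation is that for a fixed family the random variable $Z:=\bigl|\bigcup_{i=1}^{t}\CN(X_i,L)\bigr|$ is an exact binomial. Indeed, for each $v\in L$ let $\xi_v$ be the indicator of the event $\{\,\exists\,i:\ X_i\subseteq N(v)\,\}$, so that $Z=\sum_{v\in L}\xi_v$. Since every $X_i$ lies in $[n]\setminus L$, the indicator $\xi_v$ depends only on the edges joining $v$ to $[n]\setminus L$; these edge sets are pairwise disjoint as $v$ ranges over $L$, so the $\xi_v$ are independent, and by symmetry identically distributed. Hence $Z\sim\Bin(|L|,q)$ with $q:=\Pr[\xi_v=1]$, and the Chernoff bound (Lemma~\ref{lem:che}) applies directly to $Z$.

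Next I would pin down $q$. The union bound gives $q\le\sum_{i}p^{|X_i|}=tp^{r}$, while the second Bonferroni inequality gives $q\ge\sum_{i}p^{|X_i|}-\sum_{i<j}p^{|X_i\cup X_j|}\ge tp^{r}-\binom{t}{2}p^{r+1}$, where we used that distinct $r$-sets satisfy $|X_i\cup X_j|\ge r+1$. Since $tp\le\teps$ this is at least $tp^{r}(1-\teps/2)$. Writing $M:=t|L|p^{r}$, we conclude $(1-\teps/2)M\le\mu:=\Exp Z\le M$; moreover $\mu\ge\tfrac12 M\ge 25\,rt(\log n)/\teps$ because $|L|\ge 50r(\log n)/(\teps p^{r})$. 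If $Z\notin(1\pm\sqrt\teps)M$ then $|Z-\mu|\ge(\sqrt\teps-\teps/2)M\ge\tfrac12\sqrt\teps\,\mu$, so by Lemma~\ref{lem:che} with $a=\sqrt\teps/2<3/2$ this occurs with probability at most $2e^{-\teps\mu/12}\le 2n^{-2rt}$. A union bound over all admissible families then yields a total failure probability of at most $\sum_{t\ge1}n^{rt}\cdot 2n^{-2rt}=2\sum_{t\ge1}n^{-rt}=o(1)$.

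I do not expect a serious obstacle here: the two points that require care are (i) spotting that the events $\{\xi_v=1\}$, $v\in L$, are governed by disjoint batches of edges and hence independent — this is what turns $Z$ into a genuine binomial and lets Chernoff do all the work — and (ii) invoking the distinctness of the $X_i$ in the Bonferroni lower bound on $q$. The $\log n$ factor in the hypothesis $|L|\ge 50r(\log n)/(\teps p^{r})$ is exactly what is needed so that the bound $e^{-\Theta(\teps\mu)}$ can absorb the $n^{rt}$ families in the union bound; beyond that the argument is a routine computation.
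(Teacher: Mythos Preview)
Your proposal is correct and follows essentially the same approach as the paper: both arguments note that $Z\sim\Bin(|L|,q)$, bound $q$ via Bonferroni using $|X_i\cup X_j|\ge r+1$ and $t\le\teps/p$, apply the Chernoff bound with $a=\sqrt\teps/2$, and finish with a union bound over the at most $\binom{n}{r}^t$ families. Your handling of the shift from $(1\pm\sqrt\teps)M$ to a deviation of $\tfrac12\sqrt\teps\,\mu$ from the mean is slightly more explicit than the paper's, but the structure and constants are otherwise identical.
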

\begin{proof}
  Let $X_1,\dots,X_t$ be a fixed family of
  $t\leq \teps/p$ different
  $r$-sets
  contained in $[n]\setminus L$.
  We consider the random set $W=
  \bigcup_{i=1}^t \CN(X_i,L)$. Note that $W$ contains each vertex of $L$
  independently with the same probability, so $|W|\sim \Bin(|L|,q)$ for some
  probability $q$. We will use the inclusion-exclusion principle (Bonferroni's
  inequality) to estimate the expectation of $|W|$, and then apply Chernoff
  bounds to get concentration.

  Let $A_i=\CN(X_i,L)$. Then, according to Bonferroni's inequality, we have
  \[
   \sum_{i=1}^t |A_i|- \sum_{1\le i<j\le t}  |A_i\cap A_j|\le
   \Big|\bigcup_{i=1}^t A_i\Big| = |W|  \le \sum_{i=1}^t |A_i|.
  \]
  Here $\Exp[|A_i|]=\Exp[|\CN(X_i,L)|]=|L|p^r$ and $\Exp[|A_i\cap A_j|] =
  \Exp[|\CN(X_i\cup X_j,L)|] = |L|p^{|X_i\cup X_j|}$. In particular,
  $\Exp[|A_i\cap A_j|]\le |L|p^{r+1}$.
  Thus, on the one hand we have
  \[ \Exp[|W|] \leq \sum_{i=1}^t \Exp[|A_i|] = t |L|p^r,\]
  and on the other hand,
  \[ \Exp[|W|] \geq \sum_{i=1}^t \Exp[|A_i|] - \sum_{1\leq i<j\leq t}
  \Exp[|A_i\cap A_j|] \geq t |L|p^r - \binom{t}2|L|p^{r+1}
  \geq t|L|p^r - t\teps |L|p^r/2,\]
  using $t\leq \teps/p$ in the last inequality. Hence
  \[  (1-\teps/2)t|L|p^r \leq \Exp[|W|] \leq t|L|p^r, \]
  so in particular, $q\ge (1-\teps/2)tp^r \ge tp^r/2$. Then by a Chernoff bound, i.e., Lemma~\ref{lem:che} with $a=\sqrt{\teps}/2$, we get
  \[ \Pr[|W|\not\in (1\pm \sqrt{\teps})t|L|p^r]\leq 2e^{-\teps t|L|p^r/24}
  \leq e^{-2tr\log n}, \]
  using $|L|\geq 50r\log n/(\teps p^r)$ in the last inequality.
  Finally, a union-bound over all choices of $X_1,\dots,X_t$ shows that the property in the lemma fails with probability bounded by
  \[ \sum_{t=1}^{\teps/p} \binom{n}{r}^t  e^{- 2tr \log n} \le
  \sum_{t=1}^{\infty}e^{-tr\log n} \le \sum_{t=1}^{\infty} n^{-t} \to 0.\qedhere
  \]
\end{proof}

\section{Approximate covering -- proof of Lemma \ref{lemma:smallset-cover}} \label{sec:almost_covering}

Let $W$ be a fixed set of at most $n/(100r)^4$ vertices and let $G\sim \Gnp$, where $p=p(n)\gg (\log n/n)^{1/2}$. Consider some coloring of
the edges of $G$ with $r$ colors. Our goal is to find $3r^2$ vertex-disjoint
monochromatic cycles that cover all but at most $K/p$ vertices
of $W$, for some sufficiently large constant $K=K(r)$.

Let $U=V(G)\setminus W$. It is convenient to work with different colors
separately, so we partition $W$
into $r$ sets \[W= W_1 \cupdot \dots \cupdot W_r\] such that
$v\in W_i$ if the most commonly used color on edges between $v$ and $U$
is $i$:
\[ v\in W_i \implies |N_i(v,U)| = \max_{j\in [r]}{|N_j(v,U)|}. \]
Recall here that $N_i(v,U)$ is the set of vertices in $U$ joined to $v$ by an
edge of color $i$.

Next, for every $i\in [r]$, we define an auxiliary graph $H_i$ on $W_i$
where two vertices $v, w \in W_i$ are connected by an edge if and only if
\[
	|N_i(v, U) \cap N_i(w, U)| \ge \frac{n p^2}{(50r)^4}. 
\]

The main idea is to apply Lemma \ref{lemma:gendfs} on $H_i$ to find a small
collection of ``auxiliary cycles'' covering most of $W_i$. Then we will use
Hall's condition to turn each such auxiliary cycle into a
cycle in $G_i$ (the subgraph of $G$ of edges in color $i$) that covers
the same vertices in $W_i$.
We thus have two claims:

\begin{claim}\label{cl:1}
  Let $c=384r$. Then w.h.p each auxiliary graph $H_i$ contains $3r-1$
  vertex-disjoint cycles covering all but at most $18r^2c/p +(3r)^3$
  vertices of $H_i$.
\end{claim}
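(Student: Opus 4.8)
The plan is to apply Lemma~\ref{lemma:gendfs} to $H_i$ with $k=3r$ and $m=c/p=384r/p$. Since $2k^2m+k^3=18r^2(c/p)+(3r)^3$, this produces exactly $3r-1$ vertex-disjoint cycles covering all but $18r^2c/p+(3r)^3$ vertices of $H_i$, \emph{provided} that the complement of $H_i$ contains no copy of $K_{3r}^{c/p}$. So the whole task reduces to showing that w.h.p.\ over $G\sim\Gnp$, the complement of $H_i$ is $K_{3r}^{c/p}$-free for \emph{every} $r$-colouring of $E(G)$.

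I would rely on two properties that $G$ has w.h.p.: \emph{(i)} every vertex has at least $0.9np$ neighbours in $U=V(G)\setminus W$ (immediate from a Chernoff bound and a union bound, since $np\gg\log n$); and \emph{(ii)} there are no disjoint sets $S,V\subseteq V(G)$ with $|S|=c/p$ and $|V|\le n/(2r)$ such that every vertex of $S$ has at least $0.9np/r$ neighbours in $V$. Property \emph{(ii)} is the crux: once $S$ and $V$ are fixed, the $|S|$ events ``$v$ has $\ge 0.9np/r$ neighbours in $V$'' depend on disjoint edge sets, hence are independent, and each has probability at most $e^{-\Omega(np/r)}$ by a Chernoff bound (because $|V|p\le np/(2r)$ is a fixed factor below the threshold $0.9np/r$); so the probability that all of them hold is at most $e^{-\Omega(|S|\,np/r)}=e^{-\Omega(cn/r)}=e^{-\Omega(384n)}$, which comfortably dominates the at most $4^n$ choices of the pair $(S,V)$. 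This is precisely where the choice $c=384r$ (scaling with $r$) is used.

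Now suppose for contradiction that for some colouring the complement of $H_i$ contains a $K_{3r}^{c/p}$ with parts $S_1,\dots,S_{3r}\subseteq W_i$ of size $m=c/p$, and set $V_j:=N_i(S_j,U)=\bigcup_{v\in S_j}N_i(v,U)\subseteq U$. For $v\in S_j\subseteq W_i$ we have $|N_i(v,U)|=\max_{\ell\in[r]}|N_\ell(v,U)|\ge|N(v,U)|/r\ge 0.9np/r$ by \emph{(i)}, and $N_i(v,U)\subseteq V_j$; hence every vertex of $S_j$ has at least $0.9np/r$ neighbours in $V_j$, and since $|S_j|=c/p$ property \emph{(ii)} forces $|V_j|>n/(2r)$. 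On the other hand, since there is no $H_i$-edge between $S_j$ and $S_{j'}$ for $j\ne j'$,
\[
  |V_j\cap V_{j'}|\le\sum_{v\in S_j}\sum_{w\in S_{j'}}|N_i(v,U)\cap N_i(w,U)|<m^2\cdot\frac{np^2}{(50r)^4},
\]
so $\sum_{j<j'}|V_j\cap V_{j'}|<\binom{3r}{2}m^2\,np^2/(50r)^4\le\frac{9\cdot 384^2}{2\cdot 50^4}\,n<0.11\,n$. By Bonferroni's inequality,
\[
  n\ \ge\ |U|\ \ge\ \Big|\bigcup_{j=1}^{3r}V_j\Big|\ \ge\ \sum_{j=1}^{3r}|V_j|-\sum_{j<j'}|V_j\cap V_{j'}|\ >\ 3r\cdot\frac{n}{2r}-0.11\,n\ =\ 1.39\,n,
\]
a contradiction. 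Hence the complement of $H_i$ is $K_{3r}^{c/p}$-free, and Lemma~\ref{lemma:gendfs} yields the claim.

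The main obstacle is the lower bound $|V_j|>n/(2r)$, i.e.\ property~\emph{(ii)}: a direct inclusion--exclusion or cherry-counting argument on the colour-$i$ neighbourhoods $\{N_i(v,U):v\in S_j\}$ does not close, because these sets can overlap heavily \emph{within} a single part $S_j$. The decisive point is instead that $S_j$ is already so large ($|S_j|=c/p$) that the union of the colour-$i$ neighbourhoods of its vertices cannot be appreciably smaller than $n/r$, after which $3r$ such almost-disjoint subsets of $U$ simply do not fit. The only real care needed is to choose $c=384r$ and the slack constants so that the union bound in~\emph{(ii)} holds w.h.p.\ while the final counting still forces a genuine contradiction.
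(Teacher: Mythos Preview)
Your argument is correct and follows the same overall route as the paper: reduce via Lemma~\ref{lemma:gendfs} to showing the complement of $H_i$ is $K_{3r}^{c/p}$-free, establish that each part $S_j$ has $i$-coloured neighbourhood $V_j$ of size exceeding $n/(2r)$ in $U$, and finish with a Bonferroni-type overlap argument. The only differences are cosmetic: the paper obtains the expansion $|V_j|>|U|/(2r)$ from the edge-count estimate of Lemma~\ref{lemma:densityXY} (which is where the constant $c=384r$ originates), whereas you prove an equivalent ad-hoc property~\emph{(ii)} directly by Chernoff plus a union bound; and in the endgame the paper pigeonholes to find one pair $j\ne j'$ with $|V_j\cap V_{j'}|\ge |U|/(3r)^2$ and then a single $H_i$-edge, whereas you upper-bound $\sum_{j<j'}|V_j\cap V_{j'}|$ and derive the contradiction $n\ge|\bigcup V_j|>1.39n$ globally. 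Both variants work equally well here.
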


\begin{claim}\label{cl:2}
  The following holds w.h.p: For every choice of $k_i\leq 3r-1$ vertex-disjoint
  cycles $C_1^{i},\dots,C_{k_i}^{i}$ in each auxiliary graph $H_i$, the
  graph $G$ contains $k_1+\dots+k_r$ vertex-disjoint monochromatic cycles
  covering all the sets $V(C_j^{i})$.
\end{claim}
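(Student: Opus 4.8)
The plan is to turn each auxiliary cycle $C_j^i$ in $H_i$ into a genuine monochromatic cycle of colour $i$ in $G$ by splicing a ``connector'' vertex of $U$ into every edge, choosing all connectors distinct so that the resulting cycles are pairwise vertex-disjoint. First I would record that, since the cycles $C_j^i$ are vertex-disjoint inside each $H_i$ and the sets $W_1,\dots,W_r$ are pairwise disjoint, all of the $C_j^i$ are pairwise vertex-disjoint subgraphs of $G[W]$, together spanning at most $|W|\le n/(100r)^4$ vertices, and hence having at most $2|W|$ edges in total (a single-vertex auxiliary cycle contributes no edge, a single-edge auxiliary cycle will be assigned two slots, a longer cycle contributes as many edges as vertices). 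To each edge $\{v,w\}$ of some $C_j^i$ associate the candidate set $A_{\{v,w\}}=N_i(v,U)\cap N_i(w,U)$; by the definition of $H_i$ we have $|A_{\{v,w\}}|\ge np^2/(50r)^4$. If we can find a system of distinct representatives (SDR), i.e.\ pairwise distinct vertices $u_{\{v,w\}}\in A_{\{v,w\}}$, then replacing in $C_j^i$ every edge $\{v,w\}$ by the path $v\,u_{\{v,w\}}\,w$ yields a cycle in $G_i$ covering $V(C_j^i)$; all connectors lie in $U$ and are distinct while all cycle vertices lie in $W$ and are distinct, so these $k_1+\dots+k_r$ monochromatic cycles are vertex-disjoint. (A single-vertex auxiliary cycle is already a degenerate cycle, and a single-edge one $\{v,w\}$ is realised as the $4$-cycle $v\,u\,w\,u'$ using two distinct elements of $A_{\{v,w\}}$; this is why such a cycle gets two slots, and it does not affect the counting below.)

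Everything thus reduces to the existence of the SDR, which by Hall's theorem amounts to showing that $\bigl|\bigcup_{s\in\mathcal S}A_s\bigr|\ge|\mathcal S|$ for every family $\mathcal S$ of edge-slots. If $|\mathcal S|\le np^2/(50r)^4$ this is immediate since a single $A_s$ already has that size. For larger $\mathcal S$ I would argue by contradiction: if $Y:=\bigcup_{s\in\mathcal S}A_s$ satisfies $|Y|<|\mathcal S|$, note that the (multi)graph of slot-pairs has maximum degree at most $2$, hence decomposes into at most three matchings; let $\mathcal S_1\subseteq\mathcal S$ be the largest, so $\ell:=|\mathcal S_1|\ge|\mathcal S|/3$. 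The pairs of $\mathcal S_1$ are disjoint and contained in $W$, hence disjoint from $Y\subseteq U$, and $|Y|<|\mathcal S|\le 3\ell$, so we may pad $Y$ to a set $Y'\subseteq U$ of exactly $3\ell$ vertices. Since $A_{\{v,w\}}\subseteq Y'\cap N(v)\cap N(w)$ for each $\{v,w\}\in\mathcal S_1$, Lemma~\ref{lemma:density_triples} applied to $\calL=\mathcal S_1$ and $Y'$ gives
\[
  \ell\cdot\frac{np^2}{(50r)^4}\le\sum_{\{v,w\}\in\mathcal S_1}|A_{\{v,w\}}|\le\sum_{\{v,w\}\in\mathcal S_1}|N(v,Y')\cap N(w,Y')|\le\max\{72\ell\log n,\ 6\ell^2p^2\}.
\]
Dividing by $\ell$ yields either $np^2/(50r)^4\le 72\log n$, which is impossible for large $n$ since $p\gg(\log n/n)^{1/2}$ forces $np^2/\log n\to\infty$, or $\ell\ge n/\bigl(6(50r)^4\bigr)$, which contradicts $\ell\le 2|W|\le n/\bigl(8(50r)^4\bigr)$ (using $(100r)^4=16(50r)^4$). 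Either way we reach a contradiction, so Hall's condition holds and the SDR exists.

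The one genuinely delicate point is that the colour classes $G_i$, the graphs $H_i$, and the sets $A_s$ all depend on an adversarial colouring, so the $A_s$ cannot be treated as random sets. What makes the Hall count go through uniformly is that Lemma~\ref{lemma:density_triples} is a w.h.p.\ property of $G$ alone, holding simultaneously for all matchings $\calL$ and all sets $Y'$ of the appropriate size; combined with the purely combinatorial lower bound $|A_s|\ge np^2/(50r)^4$ coming from the definition of $H_i$, this covers every colouring and every choice of the auxiliary cycles at once, with no further union bound. The only mild technical chore is the passage from arbitrary slot-pairs (which overlap along the auxiliary cycles) to disjoint ones, costing a harmless factor of $3$ that is comfortably absorbed by the gap between $(100r)^4$ and $6(50r)^4$; I expect this bookkeeping, rather than any real mathematical difficulty, to be the main thing to get right.
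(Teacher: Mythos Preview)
Your proposal is correct and follows essentially the same approach as the paper: reduce to a system of distinct representatives for the connector sets $A_{\{v,w\}}=N_i(v,U)\cap N_i(w,U)$, verify Hall's condition by passing to a matching of size at least one third, and derive a contradiction from Lemma~\ref{lemma:density_triples} together with the lower bound $|A_{\{v,w\}}|\ge np^2/(50r)^4$. The paper phrases the reduction slightly differently (it proves $|N_B(\calL)|\ge 3|\calL|$ for disjoint $\calL$ and then deduces Hall), and your numerics in the second case are arranged a bit differently, but the substance is identical; your explicit treatment of the single-edge auxiliary cycle via a $4$-cycle is a small refinement the paper leaves implicit.
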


From these two claims, Lemma \ref{lemma:smallset-cover} follows immediately
with $K=r\cdot 18r^2\cdot 384 r+27r^4 \le (20r)^4$. It remains to prove the two claims.

\begin{proof}[Proof of Claim \ref{cl:1}]
  In light of Lemma \ref{lemma:gendfs}, it is enough to show that the complement
  of each $H_i$ is $K^{c/p}_{3r}$-free.
  There is an easy intuition as to why this is the case: in $G$, each vertex of $W$ is
  adjacent to about $p|U|$ vertices in $U$, so we expect a subset of size
  $c/p$ to expand to almost the whole $U$. As a vertex from $W_i$ has at least $p|U|/r$ neighbors in $U$ in color $i$, this suggests that a subset of $c/p$
  vertices of $W_i$ should have at least $|U|/r$ neighbors in this color $i$. But then if we take a bit more than $r$ sets of this size, then their
  $i$-colored neighborhoods overlap significantly, so there should be two
  sets with linearly many common neighbors in color $i$. This readily implies that some two vertices in $H_i$ are joined by an edge. We will now make this argument precise.

  First, w.h.p $G$ satisfies the properties of Lemma \ref{lemma:densityXY} with
  $\alpha=1/4$ and $\beta=1/(4r)$. Let $c =
  C_{\ref{lemma:densityXY}}(\frac{1}{4}, \frac{1}{4r})=384r$ be the
  corresponding constant given by Lemma \ref{lemma:densityXY}. Then Lemma
  \ref{lemma:densityXY} states that w.h.p any two disjoint subsets
  $X,Y\subseteq V(G)$ such that $|X|\geq c/p$ and $|Y|\geq n/(4r)$ satisfy
  \begin{equation}\label{eq:densityXY}
    e(X,Y) \leq \frac54 |X||Y|p.
  \end{equation}
  Next, every vertex $v\in W$ has $e(v,U)\sim \Bin(|U|,p)$
  neighbors in $U$, thus Lemma \ref{lem:che} shows that the
  probability of $e(v,U)\notin (1 \pm r^{-2})p|U|$ is at most
  $2e^{-|U|p/(3r^4)} = e^{-\Omega(np)}$, using $|U|\geq n/2$. As 
  $p>1/\sqrt{n}$,  a union bound over all vertices of
  $W$ gives that w.h.p $e(v, U) \in (1 \pm r^{-2}) p|U|$ for every  $v \in W$. 
  By the definition of $W_i$, it follows that w.h.p
  we have \begin{equation}\label{eq:deg}
    e_i(v, U) \ge e(v,U)/r\geq  p|U| / (r + 1)\quad\text{for every $v\in W_i$}.
  \end{equation}

  In the following, fix some $i\in [r]$. We will show that properties
  \eqref{eq:densityXY} and \eqref{eq:deg} already imply that the
  complement of $H_i$ is $K_{3r}^{c/p}$-free. In other words, we show that
  if $X_1,\dots,X_{3r}\subseteq W_i$ are disjoint sets of size $c/p$, then
  there exist $j\neq j'$ such that $e_{H_i}(X_j,X_{j'})> 0$.

  Fix any such sets, and let $Y_j = N_i(X_j, U)$ denote the set of vertices in
  $U$ that have a neighbor in $X_j$
  in color $i$. We first show that \begin{equation}\label{eq:expando}
  |Y_j| > |U| / (2r) \quad\text{for every $j\in[3r]$}.
  \end{equation}
  First, by \eqref{eq:deg} we have
  \[ e_i(X_j,Y_j) = e_i(X_j,U) \geq |X_j||U|p/(r+1). \]
  Suppose that $|Y_j| \le |U| / (2r)$ and choose an arbitrary set $Y_j \subseteq Y_j' \subseteq U$  of size $|U| / (2r) \ge n / (4r)$. Then by \eqref{eq:densityXY}, we have
  \[
    e_i(X_j,Y_j) \leq e(X_j, Y'_j) \le \frac{5}{4} |X_j| |Y'_j| p = \frac{5}{8r} |X_j|
      |U| p < |X_j||U| p / (r + 1), 
  \]
  which is a contradiction. This establishes \eqref{eq:expando}.

  Now we can bound how much these colored neighborhoods intersect using Bonferroni's
  inequality:
  \[
  \sum_{1\le j < j' \le 3r} |Y_j \cap Y_{j'}| \ge \Big( \sum_{j = 1}^{3r} |Y_j|
  \Big) - |U| \ge |U|/2,
  \]
  using \eqref{eq:expando} for the last inequality.
  In particular, we must have $|Y_j \cap Y_{j'}| \ge |U| /
  (3r)^2$ for some $j\neq j'$. Note that $Y_{j} \cap Y_{j'}$ can also be
  written as the union of the common neighborhoods $N_i(v,U) \cap N_i(w,U)$
  over all pairs of vertices $v\in X_j$ and $w\in X_{j'}$. But then for some
  choice of $v$ and $w$, the size of this is at least the average:
  \[
  |N_i(v, U) \cap N_i(w, U)| \ge \frac{|U|}{(3r)^2} \cdot
  \frac{1}{|X_j||X_{j'}|} = \frac{|U| p^2}{(3r)^2 c^2} \ge \frac{n
  p^2}{(50r)^4}
  \]
  using $|U|\ge n/2$ and $c=384r$. Thus $v$ and $w$ are connected by an edge in $H_i$ 
  and so $e_{H_i}(X_j,X_{j'})>0$, which is what we set
  out to prove.
\end{proof}

\begin{proof}[Proof of Claim \ref{cl:2}]
  First note that w.h.p $G$ satisfies the conclusion of Lemma
  \ref{lemma:density_triples}, which says that for every family $\calL$ of
  $\ell$ disjoint pairs of vertices, and for every set $Y$ of $3\ell$ vertices
  that is disjoint from each pair in $\calL$, we have
  \begin{equation}\label{eq:lem39}
    \sum_{\{v,w\} \in \calL} |N(v, Y) \cap N(w, Y)| \le \begin{cases}
      72 \ell \log n, &\text{if } \ell \le 6 \log n / p^2 \\
      2\ell|Y| p^2, &\text{otherwise.}
    \end{cases}
  \end{equation}
  This property will be enough to deduce the claim.

  Let $\calE_i= \bigcup_{j \in [k_i]} E(C^i_j)$ be the edge set of the given
  cycles in the auxiliary graph $H_i$, and let
  $\calE=\bigcup_{i \in [r] } \calE_i$. We define an
  auxiliary bipartite graph $B$ with parts  $U$ and $\calE$
  where an edge $vw \in \calE_i$
  is joined to a vertex $u\in U$ if and only if $u \in N_i(v,U) \cap N_i(w,U)$. We
  will use Hall's condition to show $B$ contains a matching covering $\calE$.
  In other words, we will show that there exists an injection $f\colon \calE\to
  U$ such that for every $vw\in \calE_i$, the vertex $f(vw)\in U$ is a common neighbor
  of both $v$ and $w$ in color $i$. This will immediately imply the statement
  of Claim \ref{cl:2}, because we can then convert every cycle $C_j^i$ in $H_i$
  into a monochromatic cycle in $G$ by replacing each edge $vw$ of $C_j^i$ by
  an $i$-colored path $(v,f(vw),w)$. The injectivity of $f$ ensures that the
  cycles we get are vertex-disjoint.

  To verify Hall's condition, we need to show that for every subset
  $\calL \subseteq \calE$ we have $|N_B(\calL)| \geq  |\calL|$. We instead
  prove the somewhat different statement that for every subset $\calL \subseteq
  \calE$ consisting of \emph{pairwise disjoint} edges in $\calE$ we have
  $|N_B(\calL)| \geq 3 |\calL|$. This second statement actually implies the first:
  as $\calE$ is
  a disjoint union of cycles, every set of edges $\calL' \subseteq\calE$
  contains a subset $\calL \subseteq \calL'$ of size at least $|\calL'|/3$ such
  that the edges in $\calL$ are vertex-disjoint, and so
  \[
   |N_B(\calL')| \ge |N_B(\calL)| \ge 3 |\calL| \ge |\calL'|.
  \]

  So take any set of pairwise disjoint edges $\calL \subseteq \calE$
  and suppose for contradiction that $N_B(\calL)$ is properly
  contained in some set $Y\subseteq U$ of size $3|\calL|$.
  Recall that every vertex pair $vw\in \calL$ is also an
  auxiliary edge in some $H_i$, so $v$ and $w$ have at least $np^2/(50r)^4$
  common neighbors in $U$ in color $i$. By definition, these common neighbors
  are also neighbors of the pair $vw$ in $B$, so they are contained in $Y$. Hence
  \[ \frac{n p^2 |\calL|}{(50r)^4}\leq \sum_{vw\in \calL} |N(v,Y)\cap N(w,Y)|. \]
  Note that since $|Y|=3|\calL|$, we can apply \eqref{eq:lem39} to the sum on
  the right-hand side.
  There are now two cases, depending on the size of $\calL$.

  If $|\calL| \le 6 \log n / p^2$, then by \eqref{eq:lem39}
  \[ \frac{np^2|\calL|}{(50r)^4} \leq \sum_{vw\in \calL} |N(v,Y)\cap N(w,Y)|\leq
  72|\calL|\log n, \]
  contradicting our assumption that $np^2 \gg \log n$.

  If $|\calL| > 6 \log n / p^2$, then by \eqref{eq:lem39}
  \[ \frac{np^2|\calL|}{(50r)^4} \leq \sum_{vw\in \calL} |N(v,Y)\cap N(w,Y)|\leq
  2|\calL||Y|p^2, \]
  and thus $|Y| \geq n/(2\cdot (50r)^4) > 3n/(100r)^4\geq 3|W|\geq 3|\calL|$,
  which contradicts the assumption that $|Y| = 3|\calL|$. This concludes the
  proof of the claim.
\end{proof}

\section{Covering a set of size $O(1/p)$ -- proof of Lemma \ref{lemma:tinyset-cover}} \label{sec:covering_small_set}

Let $r, \eps , K > 0$ be constants, where $r\geq 2$ is an integer. Consider a  subset $W \subseteq [n]$ of size at most $n/2$ and let $G \sim \Gnp$ for
$p =p(n) > n^{-1/r +\eps}$. We need to show that w.h.p every subset
$Q\subseteq W$ of size at most $K/p$ can be covered using at most $400r^4
\log(4r^2)$ monochromatic cycles.

\subsection{Proof overview} \label{sec:overview}

The main idea of the proof is the following: We 
define an edge-colored auxiliary graph $H$ on the vertex set $W$, where two
vertices $v,w\in W$ are joined by an edge of color $i$ if they are ``robustly
connected'' by monochromatic paths of color $i$, whose interior vertices belong to
$U = V(G) \setminus W$. This auxiliary graph should have two properties.
First, we want the independence number of $H$
to be bounded by a function of $r$, as this will imply, via the result of
Sárközy (Theorem \ref{thm:indep}), that every induced subgraph $H[Q]$ can be
covered by a number of monochromatic cycles in $H[Q]$ that depends only on $r$.
Second, we want the notion of ``robustly connected'' to be sufficiently strong
to allow us to convert such a cover of $H[Q]$ by auxiliary
monochromatic cycles into a cover of $Q$ by monochromatic cycles in $G$,
at least if $|Q|\leq K/p$.

It is instructive to note that this task would be significantly easier for $p\gg
n^{-1/(r+1)}$. In this case we could define $H$ by saying that $v,w\in W$ are
joined by an edge of color $i$ if there are many (i.e., $\Omega(np^{r+1})$)
vertices in the $i$-colored common neighborhood $N_i(v,U)\cap N_i(w,U)$. It is
not hard to see that if $p\gg (\log n/n)^{1/(r+1)}$, then this graph has independence
number at most $r$. Indeed, the high density implies that every set $X\subseteq
W$ of $r+1$ vertices has a large common neighborhood $\CN(X,U)$ in $U$ (of size
$\approx np^{r+1}$). Of course, for every vertex in $\CN(X,U)$, at least two of
the edges coming from $X$ must have the same color (by the pigeonhole
principle). This in turn implies that some two vertices in $X$ will have a
large common neighborhood in the same color, so $H[X]$ contains an
edge. We could then apply Theorem \ref{thm:indep} to cover $H[Q]$ with few
disjoint monochromatic cycles, and, as in the proof of Lemma
\ref{lemma:smallset-cover}, use Hall's condition to turn each of these
auxiliary cycles into a monochromatic cycle in $G$ by replacing each auxiliary
edge with a path of length 2 in $G$ in the same color.

Unfortunately, when $p$ is smaller than $n^{-1/(r+1)}$ a typical set of $r+1$
vertices does not have any common neighbor, and the graph $H$ as defined in
the previous paragraph might have unbounded independence number. We overcome this
issue by using slightly longer paths to create monochromatic cycles in $G$. So here the edges of $H$
will correspond to short monochromatic paths whose lengths are possibly greater than 2.
We now describe informally how we are going to do this, assuming for
simplicity that $r=2$, i.e., that there are only two colors, called red and
blue.

\medskip
To define $H$, consider an arbitrary set $\hX\subseteq W$ consisting of $3$
vertices $u,v,w$ (see Figure~\ref{figure}). Since we assume $p> n^{-1/r+\eps}=n^{-1/2+\eps}$, any two
vertices in $\hX$ will have $\Theta(np^2)$ common neighbors in $U$. Let $Z_1$
be the set of common neighbors of $u$ and $v$ in $U$ (and keep the vertex $w$
for later). If there are $\Omega(np^2)$ vertices $x\in Z_1$ that have an edge
to both $u$ and $v$ in the same color, then we add an edge in that color in $H$
between $u$ and $v$. However, it could be that most vertices in $Z_1$ are
connected to $u$ and $v$ by edges of different colors (i.e., there is a red edge to
$u$ and a blue edge to $v$, or vice-versa). In this case, we can find many
vertices in $Z_1$ which are of the ``majority color profile'', that is, we can
find a set $S_1\subseteq Z_1$ of $\Omega(np^2)$ common neighbors of $u$ and $v$
such that either every vertex $x\in S_1$ has a red edge to $u$ and a blue edge
to $v$, or every vertex $x\in S_1$ has a blue edge to $u$ and a red edge to
$v$. In any case, we can relabel
$\hX=\{v_\text{free},v_\text{red},v_\text{blue}\}$ such that every vertex in
$S_1$ is connected by a red edge to $v_\text{red}$ and by a blue edge to
$v_\text{blue}$ (and $v_\text{free}$ will just be the vertex $w$).

We now define the set $Z_2$ of all vertices in $U$ that have an edge to both 
$v_\text{free}$ and a vertex in $S_1$. The properties of $\Gnp$ will make
sure that this set is about as large as expected: $|Z_2|=\Omega(n^2p^4)$. If
there are $\Omega(n^2p^4)$ vertices in $Z_2$ that have an edge to both
$v_\text{free}$ and a vertex of $S_1$ in the same color -- say, both are blue
-- then we add a blue edge to $H$ between $v_\text{free}$ and
$v_\text{blue}$. And of course, if both edges are red, then we
add a red edge between $v_\text{free}$ and $v_\text{red}$. This way an
edge of color $i$ corresponds to many $i$-colored paths of length 3 between the
two involved vertices. However, as before, it could be that most vertices
in $Z_2$ are connected to $v_\text{free}$ and $S_1$ in both colors. Then,
again, there must be a majority color profile of such vertices, and we can find
a subset $S_2\subseteq Z_2$ of size $\Omega(n^2p^4)$ such that every vertex in
$S_2$ has either a blue edge to $v_\text{free}$ and a red edge to a vertex of
$S_1$, or the other way around. The important observation is that it is again 
possible to relabel the vertices $\hX = \{v_\text{free}, v_\text{red},v_\text{blue}\}$
such that every vertex in $S_2$ is now connected by a blue path to
$v_\text{blue}$ and by a red path to $v_\text{red}$ (for example, if every
vertex in $S_2$ has a blue edge to $S_1$ and a red edge to $v_\text{free}$, we
exchange the identities of $v_\text{red}$ and $v_\text{free}$).

If we continue like this, the following pattern emerges: starting from a set
$\hX$ of three vertices, either we are able to place an edge in $H$
between two vertices in $\hX$, or we get a sequence $S_1,S_2,S_3,\dots$ of
larger and larger sets ($|S_i|$ will be around $(np^2)^i$) such that every vertex in
$S_i$ is connected by a red path to some $v_\text{red}\in \hX$ and by a blue
path to some $v_\text{blue}\in \hX$. This statement is formalized in Claim
\ref{claim:cascade_tower} below.

Now take any set $X$ of 6 vertices, and split it into two sets $\hX$ and $\hX'$
of three vertices each. If $H$ contains an edge inside $\hX$ or $\hX'$, then
$X$ is not independent. Otherwise,  since $p>n^{-1/2+\eps}$, we see that after
about $m=1/\eps$ iterations of the above procedure, we reach a set $S_m$ from
$\hX$ and a set $S_m'$ from $\hX'$, both of size much larger than $(\log n)/p$,
such that every vertex in the set $S_m$ ($S_m'$) is connected in both colors to
the set $\hX$ ($\hX'$).
In $\Gnp$ there are many edges between any two sets of size larger than $(\log n)/p$ 
(see Lemma \ref{lemma:densityXY}), in particular, we can find
many (say) red edges between $S_m$ and $S_m'$. But then there is
a vertex in $\hX$ that is connected to a vertex in $\hX'$ by many red
paths, and we can add a red edge connecting these vertices to $H$.
This gives us an $H$ that has independence number at most 5
such that an $i$-colored edge in $H$ corresponds to many $i$-colored paths
between the two vertices in $G$.

The same general approach works if there are more than two colors. In the rest
of this section, we explain the details of the above outline to get a real proof, 
and show how to turn the auxiliary cycles in $H[Q]$ into monochromatic cycles in $G$.

\begin{figure}[t]
\begin{center}
\begin{tikzpicture}
	\tikzstyle{vertex}=[circle,draw=black,fill=black,inner sep=0,minimum size=4pt,text=white,font=\footnotesize]

	\draw[rounded corners=1mm,dotted] (0,0) rectangle (10,1);
	\draw[rounded corners=1mm,dotted] (0,1.5) rectangle (10,2.5);
	\draw[rounded corners=1mm,dotted] (0,3) rectangle (10,4);
	\draw[rounded corners=1mm,dotted] (0,4.5) rectangle (10,5.5);
	\node at (-.5,.5) {$L_0$};
	\node at (-.5,2) {$L_1$};
	\node at (-.5,3.5) {$L_2$};
	\node at (-.5,5) {$L_3$};

	\draw[red,pattern=vertical lines,pattern color=red] (3.03,2.1)--(1.5,.6)--(3.9,1.84)--(3.03,2.1);
	\draw[blue,fill=blue, fill opacity=.1] (3.1,1.84)--(5,.6)--(3.97,2.1)--(3.1,1.84);
	\draw[red,pattern=vertical lines,pattern color=red] (5.5,3.5)--(8.5,.6)--(7.5,3.5)--(5.5,3.5);
	\draw[blue,fill=blue, fill opacity=.1] (3.03,2.1)--(3.9,1.84)--(7.4,3.4)--(5.5,3.5)--cycle;
	\draw[red,pattern=vertical lines,pattern color=red] (5.5,3.5)--(7.4,3.6)--(4.5,5)--(1.65,4.9)--cycle;
	\draw[blue,fill=blue, fill opacity=.1] (1.5,5)--(1.5,.6)--(4.5,5)--(1.5,5);

	\node[vertex,label=below:{$u$}] (u) at (1.5,.6) {};
	\node[vertex,label=below:{$v$}] (v) at (5,.6) {};
	\node[vertex,label=below:{$w$}] (w) at (8.5,.6) {};

	\draw[fill=white] (3.5,2) ellipse (.5 and .25);
	\draw[fill=white] (6.5,3.5) ellipse (1 and .25);
	\draw[fill=white] (3,5) ellipse (1.5 and .25);
	\node at (3.5,2) {$S_1$};
	\node at (6.5,3.5) {$S_2$};
	\node at (3,5) {$S_3$};

\end{tikzpicture}
\end{center}
\caption{Building towers when $r=2$.} 
\label{figure}
\end{figure}
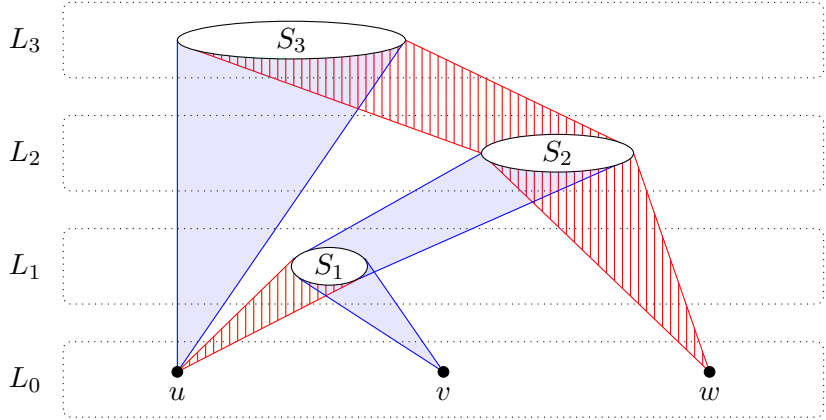

\subsection{Towers, cascades, and the auxiliary graph}
Without loss of generality, we may assume that $p=n^{-1/r+\eps}$ where $\eps = 1/qr$ for some large integer $q$ such that $r$ does not divide $q-1$. (So we have $p=n^{-(q-1)/qr}=n^{-(q-1)\eps}$.)

Let us fix a partition of $U=V(G)\setminus W$ into $1/\eps$ levels of the same size:
\[ U = L_1\cupdot \dots \cupdot L_{1/\eps}, \]
where $|L_k| = \eps|U|$. For notational convenience, we also define $L_0= W$.
As this partition does not depend on the choice of $Q \subseteq W$, we may fix it before exposing $G$. 
Set
\begin{equation}\label{eq:mu}
  \mu = \frac{|L_k| p^r} {2 r^r} = \Theta(n^{r \eps}),
\end{equation}
and let $m \in \mathbb{N}$ be such that $m-1 < \frac{q-1}{r} < m$. Note that this implies $m\le 1/\eps$, and 
\begin{equation} \label{eq:mu_cow_m}
  \mu^{m-1} = O(n^{-\eps} / p) \quad \text{ and } \quad \mu^m = \Omega(n^{\eps} / p).
\end{equation}
Indeed, $\mu^{m-1} \le \mu^{(q-2)/r} =O(n^{(q-1)\eps-\eps})$ and $\mu^{m} \ge \mu^{q/r} =\Omega(n^{(q-1)\eps+\eps})$.

Next, we describe the structures needed for the proof. The point of the argument we sketched in Section~\ref{sec:overview} was to find monochromatic paths from a vertex $v$ through a sequence of sets $(S_s,\dots,S_f)$ (see Figure~\ref{figure}). We will call such a monochromatic piece a \emph{tower} on $v$. Due to technical reasons, the formal definition below needs to include some auxiliary sets, as well.

\begin{definition}[Towers]\label{def:tower}
  Let $1\leq s\leq f\leq m$, let $i\in [r]$ be a color and $v\in L_0$. We call a
  sequence of sets $(S_{s-1},S_s, \dots, S_f)$ an \emph{$i$-tower on $v$}
  if there is a sequence of sets $T_s,\dots,T_f\subseteq L_0$ of size $r-1$
  such that the following properties hold:
   \begin{enumerate}[label=(T\arabic*)]
     \item\label{t1} $S_k\subseteq L_k$ and $|S_k| = \mu^k$ for all $k\in \{s-1,\dots,f\}$,
     \item\label{t2} $S_s\subseteq N_i(v) \cap N(S_{s-1})\cap \CN(T_s)$,
     \item\label{t3} $S_k\subseteq N_i(S_{k-1})\cap\CN(T_k)$ for all $k\in \{s+1,\dots,f\}$,
     \item\label{t4} $v\in T_s$ if $s>1$, and $S_0=\{v\}$ with $v\notin T_1$ if $s=1$.
  \end{enumerate}
  We say that $(T_s,\dots,T_f)$ is a \emph{witness sequence} for the tower $(S_{s-1},\dots,S_f)$.
\end{definition}

Note that in the case $r=2$ that we discussed in Section~\ref{sec:overview}, the $v_{free}$ of step $i$ will be the witness $T_{i+1}$ that ``generates'' $S_{i+1}$ from $S_i$. For example, in Figure~\ref{figure}, $(u,S_1)$ is a red tower on $u$ with witness sequence $(v)$, $(v,S_1,S_2)$ is a blue tower on $v$ with witness sequence $(u,w)$, $(S_1,S_2,S_3)$ is a red tower on $w$ with witness sequence $(w,u)$, and $(S_2,S_3)$ is a blue tower on $u$ with witness sequence $(u)$.

In general, if $(S_{s-1},S_s,\dots,S_f)$ is an $i$-tower on $v$, then it follows from \ref{t2} and \ref{t3} that every vertex in $S_f$ is reachable from $v$ by a path in color $i$ passing through each $S_s, \dots, S_{f-1}$ exactly once. Property \ref{t4}, the set $S_{s-1}$ and the witness sequence $T_s,\dots,T_f$ are needed to establish some pseudorandom properties of the sets $S_s,\dots,S_f$, such as expansion.

To define our auxiliary graph,
we need one additional structure, a \emph{cascade} that we define on a pair of vertices in $L_0$. As
we will see later, it guarantees that the two vertices are connected by
monochromatic paths in a very robust way (Claim \ref{cl:robust}).

\begin{definition}[Cascades]\label{def:cascade}
  Let $i \in [r]$ be a color. We say that two vertices $v,w \in L_0$
  are \emph{connected by an $i$-cascade} if there is an $i$-tower
  $(S^v_{s_v-1},S^v_{s_v}, \dots, S^v_f)$ on $v$ and an $i$-tower
  $(S^w_{s_w-1},S^w_{s_w}, \dots, S^w_f)$ on $w$ for some $1\le s_v,s_w\le f\le
  m$, such that either \begin{enumerate}[label=(C\arabic*)]
  \item \label{c1} $S^v_f = S^w_f$, or
  \item \label{c2} $f = m$ and $e_i(R_v,R_w) \ge e(R_v,R_w)/r$, where $R_v=S^v_m\setminus S^w_m$ and
    $R_w=S^w_m\setminus S^v_m$.
  \end{enumerate}
\end{definition}
Note that there is no need to impose any conditions on the disjointness of the sets in \ref{c2}. This is because if $R_v$ and $R_w$ are small then $S_m^v$ and $S_m^w$ have a significant overlap, so the situation is similar to \ref{c1}.

We now define the auxiliary graph $H$ on the vertex set $L_0$ by adding an
edge of color $i$ between two vertices $v,w\in L_0$ if $v$ and $w$ are connected by
an $i$-cascade. There are two central claims:

\begin{claim}\label{cl:indep}
  W.h.p, $H$ has independence number at most $4r$.
\end{claim}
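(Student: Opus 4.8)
The plan is to show that any set of $4r+1$ vertices in $L_0$ contains two vertices connected by a cascade, hence spans an edge of $H$. First I would split such a set into disjoint blocks of size $r+1$; since $4r+1 > (r)(r+1)/\ldots$ — actually the cleaner bookkeeping is to take $\lfloor (4r+1)/(r+1)\rfloor \ge 3$ disjoint $(r+1)$-sets and apply to each of them the tower-building machinery from Claim~\ref{claim:cascade_tower} (stated in the overview, to be proven just after this claim). That machinery says: starting from an $(r+1)$-set $\hat X\subseteq L_0$, either two vertices of $\hat X$ are already joined in $H$ (because for some color $i$ there are $\Omega(\mu^k)$ vertices in the appropriate common-neighborhood set that see two vertices of $\hat X$ in the same color $i$, yielding a cascade via~\ref{c1}), or we obtain, for \emph{every} level up to $m$, a tower $(S_{s-1},\dots,S_k)$ on some vertex of $\hat X$ in some color, together with the structural relabeling that every vertex of $S_k$ is reached from two distinct vertices of $\hat X$ by paths in two fixed colors. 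The key point is that the tower can always be pushed one level higher unless an $H$-edge appears inside $\hat X$, because a set of size $\mu^k$ expands (via the density lemmas of Section~\ref{sec:tools}, applied to $\Gnp$) to a set in $L_{k+1}$ of size $\Theta(\mu^{k}\cdot |L_{k+1}|p) = \Theta(\mu^{k+1})$ worth of common neighbors with the relevant witness set $T_{k+1}$, and the pigeonhole over $r$ colors forces a majority color profile.

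Next I would run this for each of the (at least three) disjoint $(r+1)$-blocks. If any block already spans an $H$-edge we are done, so assume none does. Then from each block $\hat X^{(a)}$ we get a color-$i_a$ tower reaching level $m$, i.e. a set $S^{(a)}_m\subseteq L_m$ of size $\mu^m = \Omega(n^\eps/p)$ (using~\eqref{eq:mu_cow_m}) with the property that every vertex of $S^{(a)}_m$ is joined back to a designated vertex of $\hat X^{(a)}$ by a monochromatic path of color $i_a$. By pigeonhole among the three blocks, two of them, say blocks $a$ and $b$, use the same color $i_a = i_b = i$. Now $S^{(a)}_m$ and $S^{(b)}_m$ are two subsets of $L_m$ each of size $\mu^m \gg (\log n)/p$, so Lemma~\ref{lemma:densityXY}(1) gives $e(S^{(a)}_m, S^{(b)}_m) = \Theta(|S^{(a)}_m||S^{(b)}_m|p) > 0$, and in fact by pigeonhole over the $r$ colors, at least a $1/r$ fraction of these edges have a single color — but what we actually need is weaker: setting $R_v = S^{(a)}_m\setminus S^{(b)}_m$, $R_w = S^{(b)}_m\setminus S^{(a)}_m$, either these sets are small (so $S^{(a)}_m$ and $S^{(b)}_m$ overlap a lot, and a common vertex plus the two towers on vertices of $\hat X^{(a)}$ and $\hat X^{(b)}$ in color $i$ gives a cascade via~\ref{c1} after suitably re-indexing the starting levels), or $R_v, R_w$ are large, whence applying the color-$i$ density bound and averaging over colors yields $e_i(R_v,R_w)\ge e(R_v,R_w)/r$, i.e.~condition~\ref{c2}. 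Either way we produce an $i$-cascade between a vertex of block $a$ and a vertex of block $b$, so $H$ has an edge on our $(4r+1)$-set.

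Thus w.h.p.\ every $(4r+1)$-subset of $L_0$ spans an $H$-edge, i.e.\ $\alpha(H)\le 4r$; the ``w.h.p.'' comes from conditioning on the random-graph properties in Lemmas~\ref{lemma:densityXY}, \ref{lemma:density_triples}, \ref{lemma:tuples_expand}, each of which holds w.h.p.\ uniformly over all relevant sets, so there is no loss in the union bound over the fixed partition $L_0,\dots,L_{1/\eps}$ and over subsets of $L_0$ (indeed the quantifier over subsets is already inside those lemmas, and the partition is fixed before exposing $G$).

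The main obstacle I expect is the careful handling of the relabeling and the level-indices $s_v, s_w$ in the definition of a cascade: when we build towers from two different blocks, the towers may ``start'' at different levels $s$, and we must ensure that the two towers can be made to terminate at a common level $f$ (either $f=m$, handled by pushing both to the top, or a smaller common $f$ in the case~\ref{c1} where the $S_f$'s coincide), and that the color-profile bookkeeping (which vertex of $\hat X$ plays the role of $v_{\mathrm{red}}$, which of $v_{\mathrm{blue}}$, etc., generalized to $r$ colors via the witness sets $T_k$) is consistent with Definition~\ref{def:tower}. This is exactly the content that Claim~\ref{claim:cascade_tower} is designed to encapsulate, so the proof of the present claim should be short once that claim is available; the real work is deferred there.
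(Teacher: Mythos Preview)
Your approach has the right architecture (split into blocks, build towers, connect two blocks via \ref{c2}), but there are two concrete gaps that make the argument fail for $r\ge 3$.

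\textbf{First gap: the pigeonhole step.} From each of your three $(r+1)$-blocks you extract a tower in a \emph{single} color $i_a$. With three blocks and $r$ colors, the pigeonhole ``two of the three blocks use the same color'' only works when $r\le 2$. For $r\ge 3$ the three colors $i_1,i_2,i_3$ may well be distinct, and you have no cascade.

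\textbf{Second gap: the color of the connecting edges.} Even granting two blocks $a,b$ whose towers are both in color $i$, your sentence ``averaging over colors yields $e_i(R_v,R_w)\ge e(R_v,R_w)/r$'' is not right: averaging over colors yields $e_j(R_v,R_w)\ge e(R_v,R_w)/r$ for \emph{some} color $j$, which need not equal the prescribed $i$. Condition~\ref{c2} requires the color of the towers and the majority color among the cross-edges to coincide, and you have no mechanism to force this.

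Both problems dissolve once you see what Claim~\ref{claim:cascade_tower} actually delivers. It takes a $(2r-1)$-set $\hX$ (not an $(r+1)$-set; the extra $r-1$ vertices are needed as fresh witnesses $T_k$ at every level), and in alternative~\ref{item:casc2} it returns towers in \emph{all $r$ colors simultaneously}, all sharing the same top set $S_m$. The paper then takes a $(4r-2)$-set, splits it into two $(2r-1)$-halves $\hX,\hX'$, and obtains from each half a common top set $S_m, S_m'$ carrying towers in every color. Now one first picks the color $i$ with $e_i(R,R')\ge e(R,R')/r$ (the averaging step), and \emph{then} uses the $i$-towers that are guaranteed to exist on both sides. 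The order matters: choose the color after seeing the edges, not before.

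So the fix is not to add more blocks but to strengthen what you extract from a single block: all $r$ colors at once, which requires blocks of size $2r-1$ and only two of them.
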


\begin{claim}\label{cl:haxell}
  W.h.p, for every monochromatic cycle $C$ in $H$ of length at most $K/p$, 
  there is a monochromatic cycle $C^*$ (of the same color) in $G$ such that
  $V(C) \subseteq V(C^*)$.
\end{claim}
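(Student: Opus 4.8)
\textbf{Proof plan for Claim \ref{cl:haxell}.}
The plan is to mirror the strategy of Claim \ref{cl:2}: we build a bipartite-type matching structure, this time using Haxell's hypergraph matching criterion (Theorem \ref{thm:hyp_matching}) instead of Hall's theorem, because converting a single auxiliary edge of $H$ now consumes a whole monochromatic path in $G$ (and thus a bounded-size set of vertices, not a single vertex). Fix a monochromatic cycle $C = (v_0,v_1,\dots,v_{k-1},v_0)$ in $H$, all of whose edges have color $i$, with $k \le K/p$. By definition of $H$, each edge $v_jv_{j+1}$ comes from an $i$-cascade, which gives us a choice of $i$-towers on $v_j$ and $v_{j+1}$; following Definition~\ref{def:cascade}, either the top sets $S_f$ coincide (case \ref{c1}) and the two towers glue into a single $i$-colored path from $v_j$ to $v_{j+1}$ through $\bigcup_k S_k$, or $f=m$ and there are many $i$-edges between $R_{v_j}$ and $R_{v_{j+1}}$ (case \ref{c2}), and any one such edge again yields an $i$-path from $v_j$ to $v_{j+1}$. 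In either case the connecting path has length at most $2m+1 = O(1)$, so its interior uses a bounded number of vertices of $U$, and these interior vertices can be chosen from within the (large, pseudorandom) tower sets. The idea is then: for each edge $e = v_jv_{j+1}$ of $C$, the family $\calE_e$ of admissible ``interior vertex sets'' for $e$ forms the edge set of a $(2m)$-uniform hypergraph $H_e$ on ground set $U$; if Haxell's condition holds, we get pairwise disjoint interior sets, one per edge, which we stitch together with the $v_j$'s into the desired monochromatic cycle $C^*$ with $V(C)\subseteq V(C^*)$.

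To set this up concretely I would, for each cycle edge $e$, fix the two towers from its cascade and list as the hyperedges of $H_e$ all vertex sets of the form $\{x_{s},x_{s+1},\dots\}$ that trace a valid $i$-colored path from $v_j$ to $v_{j+1}$ through consecutive tower levels (padding to a fixed uniformity by, say, including dummy slots or treating shorter paths via their own smaller-uniform hypergraphs — Haxell's theorem applies levelwise in $r$, so I would run it with $r$ replaced by the maximal path-interior size). The crucial input is a lower bound on $\tau$ of any union of a subfamily: I must show that for any set $\calL'$ of cycle-edges, one cannot hit all admissible interior sets of all edges in $\calL'$ with few vertices. This is where the pseudorandomness of towers (expansion of the $S_k$, guaranteed by properties \ref{t1}--\ref{t4} together with the density lemmas \ref{lemma:densityXY} and \ref{lemma:density_triples}) comes in: removing any set $Y$ of $\le (2m-1)(|\calL'|-1)$ vertices from $U$ still leaves, for each edge in $\calL'$, a positive-density fraction of its path-continuations intact, because each $S_k$ has size $\mu^k$ which is much larger than $|\calL'|\cdot O(1) = O(K/p)$ once we are past the first level, and the top-level edges in case \ref{c2} are between sets of size $\mu^m = \Omega(n^\eps/p) \gg \log n/p$. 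Hence $\tau(\bigcup_{e\in\calL'}\calE_e)$ is large — in fact $\ge k \gg (2m-1)(|\calL'|-1)$ — and Haxell's condition is satisfied with room to spare.

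The main obstacle I anticipate is \emph{disjointness bookkeeping across the whole cycle}: Haxell gives pairwise-disjoint interior sets for the edges, but to actually form a single cycle $C^*$ I also need the interiors to avoid the vertices $v_0,\dots,v_{k-1}\in L_0$ (easy, since towers have interiors in $U$ and $L_0=W$ is disjoint from $U$) and I need the resulting concatenation of paths to be a genuine cycle, i.e.\ no accidental repeated vertices — which is exactly what pairwise disjointness of interiors plus distinctness of the $v_j$'s delivers, provided the cycle $C$ in $H$ has no repeated vertices, which it does not. A secondary subtlety is case \ref{c2} of the cascade: here the ``interior set'' of the path also involves choosing the crossing edge in $R_{v_j}\times R_{v_{j+1}}$, so the admissible sets for adjacent cycle-edges could a priori compete for the same top-level vertices; but since $|R_{v_j}|, |R_{v_{j+1}}| = \mu^m - O(\mu^m\cdot p) \gg K/p$ (or the overlap $S_m^{v_j}\cap S_m^{v_{j+1}}$ is itself huge, reducing to case \ref{c1}), deleting $O(K/p)$ vertices cannot exhaust the crossing edges between them, so $\tau$ stays large and Haxell still applies. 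Finally I would remark that one must first condition on $G$ satisfying the conclusions of Lemmas~\ref{lemma:densityXY}, \ref{lemma:density_triples}, and \ref{lemma:tuples_expand}, all of which hold w.h.p., so that all the ``towers expand'' and ``dense sets have many crossing edges'' statements used above are valid simultaneously for every choice of $Q$ and every monochromatic cycle in $H[Q]$.
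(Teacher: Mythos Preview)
Your high-level plan matches the paper's: set up $(2m)$-uniform hypergraphs $\calH_e$ indexed by the edges of $C$ and verify Haxell's defect condition. But the core of your verification has a genuine gap.

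You argue that removing $|Y|=O(|\calL'|)$ vertices still leaves, \emph{for each} edge $e\in\calL'$, a positive fraction of its admissible paths, because ``each $S_k$ has size $\mu^k$ which is much larger than $|\calL'|\cdot O(1)=O(K/p)$ once we are past the first level.'' This is false, and not only at the first level: by \eqref{eq:mu_cow_m} one has $\mu^k\le\mu^{m-1}=O(n^{-\eps}/p)\ll K/p$ for \emph{every} $k\le m-1$. So when $|\calL'|$ is of order $K/p$, the blocker $Y$ can be much larger than every tower level below the top, and can certainly wipe out the entire low-level structure of any single cascade. An edge-by-edge argument cannot work here.

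What the paper does instead is show that $Y$ cannot damage \emph{all} the cascades in $\calL'$ simultaneously. This needs two nontrivial ingredients absent from your plan: (i) the notion of \emph{independent towers} and Claim~\ref{claim:indep}, which extracts from the given cascades a constant fraction whose generating $r$-sets $T_s\cup\{v\}$ are all distinct; and (ii) Claim~\ref{claim:towers_disjoint}, which uses Lemma~\ref{lemma:tuples_expand} to prove that independent towers are almost pairwise disjoint at every level. Near-disjointness at level $k$ means that a set $Y$ of size $O(|\calL'|)$ can occupy a $c$-fraction of at most $O(|\calL'|/\mu^k)\cdot 2^k \ll |\calL'|$ of the sets $S_k^e$; summing over the $m$ levels, only $o(|\calL'|)$ cascades are significantly hit, so some edge $e$ has its whole cascade essentially intact. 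A separate robustness statement (Claim~\ref{cl:robust}) then produces the surviving $i$-path for that $e$. Note also that Lemma~\ref{lemma:density_triples}, which you invoke, concerns common neighborhoods of vertex pairs and is not used here; the relevant expansion tool throughout is Lemma~\ref{lemma:tuples_expand}.
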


From these two claims, Lemma \ref{lemma:tinyset-cover} follows easily:

\begin{proof}[Proof of Lemma \ref{lemma:tinyset-cover}]
  Suppose that $G\sim \Gnp$ is such that the two properties in Claims \ref{cl:indep} and \ref{cl:haxell} hold (this happens w.h.p), so $H$ has independence number at most $4r$.
  Let $Q\subseteq L_0$ be an arbitrary set of size at most $K/p$. As the independence number of $H[Q]$ is also bounded by $4r$, we can apply Theorem \ref{thm:indep} to find a collection of at most $25(\alpha r)^2\log(\alpha r) \leq 400r^4\log (4r^2)$ vertex-disjoint monochromatic cycles in $H[Q]$ covering $Q$.
  
  We know that every cycle in the collection has length at most $|Q|\le K/p$, so we can replace each such cycle $C$ by a monochromatic cycle $C^*$ in $G$ that covers the vertex set of $C$ (using the property in Claim \ref{cl:haxell}). This gives a collection of at most $400r^4\log (4r^2)$ monochromatic cycles in $G$ covering the whole set $Q$.
\end{proof}

\subsection{Proof of Claim \ref{cl:indep}}

We first prove an auxiliary claim:

\begin{claim} \label{claim:cascade_tower}
  W.h.p,
  for every set $\hX=\{x_1, \dots, x_{2r - 1}\}$ of $2r-1$ distinct vertices in
  $L_0$, one of the following two statements holds:
  \begin{enumerate}[label=(\roman*)]
  \item \label{item:casc1} there are two vertices $v,v' \in \hX$ and a color $i
    \in [r]$ such that $v$ and $v'$ are connected by an $i$-cascade, or
  \item \label{item:casc2} there exists a subset $X =\{v_1, \dots, v_r\} \subseteq \hX$ and for each $i \in [r]$ an $i$-tower
      $(S^i_{s_i-1},S^i_{s_i}, \dots, S^i_m)$ on $v_i$, such
      that $S^1_m = \dots = S^r_m$.
  \end{enumerate}
\end{claim}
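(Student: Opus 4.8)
The plan is to prove Claim~\ref{claim:cascade_tower} by an iterative ``tower-building'' procedure, carried out in parallel for all $2r-1$ vertices of $\hX$. The key idea is that at each level $k$ we either discover a cascade between two vertices of $\hX$ (so~\ref{item:casc1} holds and we stop), or we successfully extend a family of towers from level $k-1$ to level $k$; and because there are only $r$ colors but $2r-1$ vertices to keep track of, after $m\le 1/\eps$ levels the pigeonhole principle forces $r$ towers that all ``meet'' at level $m$ in the same set, giving~\ref{item:casc2}. Throughout I would condition on $G$ satisfying the pseudorandom properties from Lemmas~\ref{lemma:densityXY}, \ref{lemma:density_triples} and~\ref{lemma:tuples_expand} (all holding w.h.p.), applied with the level sets $L_k$ in place of the generic sets, so that common neighborhoods of small sets inside a level expand by the expected factor $\approx |L_k|p^{(\cdot)}$ and so that two large subsets of consecutive levels have roughly the expected number of edges between them.

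\textbf{The iteration.} First I would set up the base case: for each $v\in\hX$, the singleton tower $(S_0^v)=(\{v\})$ is trivially an $i$-tower (for any colour, vacuously) with $|S_0|=\mu^0=1$. Now suppose we have reached level $k-1<m$ with a collection of towers on the vertices of $\hX$ — more precisely, suppose we are tracking, for each $v\in\hX$, a set $S_{k-1}^v\subseteq L_{k-1}$ of size $\mu^{k-1}$ that is the top of an $i_v$-tower on $v$ with some witness sequence, where the colours $i_v$ are not yet fixed in advance. To pass to level $k$, for each vertex $v$ I pick a witness set $T_k^v\subseteq L_0$ of size $r-1$ (the ``free'' vertices, e.g.\ the other vertices of $\hX$), look at $N(S_{k-1}^v)\cap\CN(T_k^v)$ inside $L_k$, which by Lemma~\ref{lemma:tuples_expand}/\ref{lemma:densityXY} has size $\approx |L_k|\cdot (\text{appropriate power of }p)$, and — since each vertex of this common neighborhood sees $S_{k-1}^v$ and the $r-1$ vertices of $T_k^v$ via some colour pattern, and there are only $r^r$ colour patterns — extract a monochromatic ``majority profile'' subset $S_k^v$ of size exactly $\mu^k = |L_k|p^r/(2r^r)$. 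Relabeling as in the $r=2$ sketch, $S_k^v$ is then the top of an $i$-tower on one of the vertices currently in play, for the colour $i$ of the majority edge. The crucial bookkeeping: either at some level two of these tower-tops coincide or become linked by many monochromatic edges — giving condition \ref{c1} or \ref{c2} of a cascade, hence~\ref{item:casc1} — or we survive all $m$ levels. In the latter case all $2r-1$ tops $S_m^v\subseteq L_m$ have size $\mu^m=\Omega(n^\eps/p)\gg(\log n)/p$, and I would use Lemma~\ref{lemma:densityXY} to guarantee many edges between any two of them; assigning each ordered pair the majority colour among these edges and applying pigeonhole over the $r$ colours to the $2r-1$ vertices, I get $r$ vertices $v_1,\dots,v_r$ whose towers can all be re-rooted to share a common colour $i$ and (after one more intersection/expansion step forcing $S_m^{v_1}=\dots=S_m^{v_r}$) a common top — which is exactly~\ref{item:casc2}. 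One technical point to handle carefully is that \ref{item:casc2} demands a \emph{single common set} $S_m$, not just pairwise cascades, so I must run the final coincidence argument on all $r$ towers simultaneously rather than pair by pair; this is where conditions \ref{c1} versus \ref{c2} of Definition~\ref{def:cascade} and the remark after it (small $R_v,R_w$ $\Rightarrow$ large overlap) get used to reduce everything to the ``$S_f^v=S_f^w$'' case.

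\textbf{Main obstacle.} The hardest part will be the relabeling/bookkeeping that keeps the definition of ``$i$-tower on $v$'' — in particular properties~\ref{t2}, \ref{t4}, and the role of the witness sequence — consistent as we re-root towers at each level. In the $r=2$ sketch this was the move ``exchange the identities of $v_{\text{red}}$ and $v_{\text{free}}$''; for general $r$ one must argue that whatever majority colour profile appears, one can permute the roles of $v$ and the $r-1$ witness vertices so that the extended tower satisfies Definition~\ref{def:tower} with the new top colour, and that the starting index $s$ of each tower is tracked correctly (it jumps to $k$ precisely when the root changes). I would state this as a clean sub-lemma: ``given an $i$-tower on $v$ with top $S_{k-1}$ and a vertex set $T$ of size $r-1$ with $v\in T$ or not as appropriate, either $S_{k-1}$ together with $\{v\}\cup T$ yields a monochromatic edge structure certifying a cascade, or there is a colour $j$, a vertex $v'\in\{v\}\cup T$, and a set $S_k\subseteq L_k$ of size $\mu^k$ that is the top of a $j$-tower on $v'$.'' Once that sub-lemma is in place, the outer induction on the level $k$ and the final pigeonhole over colours are routine, and the w.h.p.\ guarantee comes entirely from the finitely many pseudorandomness events of Section~\ref{sec:tools}, to which a union bound applies since $r$ and $m$ are constants.
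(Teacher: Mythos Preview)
Your proposal has a genuine structural gap: the paper's argument does \emph{not} build separate towers for the $2r-1$ vertices in parallel and then try to merge their tops at the end. Instead, it maintains throughout the induction a \emph{single} set $S_k\subseteq L_k$ which is simultaneously the top of $r$ towers in all $r$ different colours, one on each vertex of a rotating $r$-subset $X_k\subseteq\hX$. The mechanism is a role swap: the $r-1$ vertices of $\hX\setminus X_{k-1}$ serve as the witness set $T_k$ for the expansion $S_{k-1}\to S_k$, and the majority colour pattern on $Z_k=\bigcup_{v\in S_{k-1}}\CN(T_k\cup\{v\},L_k)$ either repeats a colour (yielding a cascade via \ref{c1}) or is a permutation of $[r]$, in which case the \emph{same} $S_k$ is the top of $r$ new towers on the $r$ vertices $\{w_1,\dots,w_{r-1},v_{i_r}\}$, which becomes $X_k$. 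Your parallel construction produces $2r-1$ \emph{distinct} top sets $S_m^v$ and offers no way to force $S_m^{v_1}=\dots=S_m^{v_r}$; the vague ``one more intersection/expansion step'' cannot do this, and the remark after Definition~\ref{def:cascade} only helps certify cascades, not equality of tops.

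There are two further errors. First, you misread~\ref{item:casc2}: it asks for an $i$-tower on $v_i$ for \emph{each} $i\in[r]$, i.e.\ $r$ towers in $r$ \emph{different} colours sharing a top, not ``towers that share a common colour $i$''. Your pigeonhole at the end targets the wrong conclusion. Second, your base case is not well-defined: a sequence $(S_0)=(\{v\})$ is not a tower under Definition~\ref{def:tower}, which requires $1\le s\le f$; the paper's base case already builds a nontrivial $S_1$ as the majority-pattern subset of $\CN(\{x_1,\dots,x_r\},L_1)$, and this is where the shared-top invariant first appears. Your sub-lemma (``one tower in, one tower out'') should be replaced by the correct invariant (``$r$ towers with common top in, either a cascade or $r$ towers with common top out''), after which the induction is indeed routine.
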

\begin{proof}
  First, note that w.h.p the property in Lemma \ref{lemma:tuples_expand} holds
  with $\teps=1/4$ simultaneously for every set $L=L_k$ where $k \in [m]$. This
  is because $|L_k| = \Theta(n) \gg \log n /(\teps p^r)$ and $m$ is a constant.
  Thus, we may assume the following property: for every $k\in [m]$ and
  every list of $t\leq \teps/p$ distinct $r$-sets $X_1,\dots,X_t\subseteq [n]\setminus L_k$,
  we have
  \begin{equation}\label{eq:310}
    \Big| \bigcup_{i=1}^t \CN(X_i,L_k) \Big| \geq t|L_k|p^r/2.
  \end{equation}
  This property will be enough to imply Claim \ref{claim:cascade_tower}.

  To prove the claim, assume $\hX = \{x_1,\dots,x_{2r-1}\}$ does not
  satisfy (i), that is, no two vertices in $\hX$ are connected by an $i$-cascade
  for some $i\in [r]$.
  We show that for every $k\in[m]$
  \begin{equation}\label{eq:goal}
    \parbox[c]{10cm}{\centering there is a set $X_k=\{v_1,\dots,v_r\}\subseteq
    \hX$ and an $i$-tower\\ $(S^i_{s_i - 1},S^i_{s_i},\dots,S^i_k)$ on each $v_i$ such
    that $S_k^1=\dots=S_k^r$.}
  \end{equation}
  Note that $X = X_m$ then satisfies \ref{item:casc2}.

  We prove \eqref{eq:goal} by induction on $k$. For the base case $k=1$, let
  $X_1=\{x_1,\dots,x_r\}\subseteq \hX$. Let $Z_1=\CN(X_1,L_1)$
  be the common neighborhood of $X_1$ in $L_1$. By \eqref{eq:310} applied to the
  single $r$-set $X_1$, we have
  \[ |Z_1|= |\CN(X_1,L_1)| \geq |L_1| p^r/2= \mu r^r. \] Let us
  say that a vertex $z \in Z_1$ has the \emph{color pattern}
  $(i_1, \dots, i_r)$ if for each $a \in [r]$, the edge $zx_a$ has
  color $i_a$. There are $r^r$ different color patterns, so we can find a
  subset $S_1 \subseteq Z_1$ of size $|S_1|=\mu$ such that all vertices
  in $S_1$ have the same color pattern $(i_1,\dots,i_r)$.

  Let $S_0^a = \{x_a\}$ for every $a\in [r]$, and $S_1^a = S_1$.
  We now claim that $(S^a_0,S_1)$ is an $i_a$-tower on $x_a$, for every $a\in [r]$.
  For this, choose $T^a_1 = X_1\setminus \{x_a\}$, and let
  us check the conditions \ref{t1}--\ref{t4} separately. 
  First, \ref{t1} requires that $|S_0^a|=1$, $|S^a_1|=\mu$, $S^a_0\subseteq L_0$
  and $S_1\subseteq L_1$, all of which are true.
  For \ref{t2} note that
  $S_1^a \subseteq N_{i_a}(x_a) \cap N(S^a_0)\cap \CN(T^a_1)$ holds because
  every vertex in $S_1^a$ lies in the common
  neighborhood of $X_1=T_1\cup S_0^a$ and has an edge of color $i_a$
  to $x_a$.
  The condition \ref{t3} is vacuous in this case.
  Finally, for \ref{t4}, note that $S_0^a =\{x_a\}$ and $x_a\notin T_1^a$ hold by definition.

  If $i_a=i_b$ for some distinct $a,b\in [r]$, then $x_a$ and $x_b$ are
  connected by an $i_a$-cascade (condition \ref{c1}), contrary to our assumption. So all $r$
  different colors appear in the color pattern $(i_1,\dots,i_r)$, and by
  relabeling the vertices in $X_1=\{x_1,\dots,x_r\}$ as
  $X_1=\{v_1,\dots,v_r\}$ so that $v_{i_a}=x_a$, we get
  the required $i$-tower on each $v_i$, proving \eqref{eq:goal} for
  $k=1$.

  Now suppose that \eqref{eq:goal} holds for some $k-1\ge 1$, and let us prove that it
  holds for $k$, as well. By the induction hypothesis, there is a set of vertices
  $X_{k-1}=\{v_1,\dots,v_r\}\subseteq \hX$ and an $i$-tower
  $(S^i_{s_i-1},S^i_{s_i},\dots,S^i_{k-1})$ on each $v_i$ such that
  $S_{k-1}^1=\dots=S_{k-1}^r=:S_{k-1}$, where by \ref{t1} we have $|S_{k-1}|=\mu^{k-1}$. Let
  $X'= \hX\setminus X_{k-1}=\{w_1, \dots, w_{r-1}\}$ be the remaining $r-1$
  vertices (recall that $\hX$ has size $2r-1$).

  Next, we define $Z_k \subseteq L_k$ as the set of
  all common neighbors of the sets $X'\cup \{v\}$ for $v\in S_{k-1}$, i.e.,
  $$
    Z_k=\bigcup_{v\in S_{k-1}} \CN(X'\cup \{v\},L_k).
  $$  
  Here we have $\mu^{k-1}\le \mu^{m-1} \le \teps/p$ (see \eqref{eq:mu_cow_m}) distinct $r$-sets, so
  \eqref{eq:310} gives
  \[ |Z_k| = \Big|\bigcup_{v\in S'_{k-1}} \CN(X'\cup \{v\},L_k)\Big| \geq
  \mu^{k-1} |L_k| p^r/2 = \mu^{k}r^r. \]
  Each vertex $z \in Z_k$ is connected to $w_j$ for every $j \in [r-1]$
  and has a neighbor in $S_{k-1}$. Fix one such neighbor
  $w'_z\in S_{k-1}$ (chosen arbitrarily) and define the color pattern of $z$ to be $(i_1,\dots,i_r)$
  if the edge $zw_j$ has color $i_j$ for every $j\in[r-1]$ and $zw'_z$ has color
  $i_r$. Once again, there are $r^r$ such patterns, so there is a subset
  $S_k\subseteq Z_k$ of $|S_k|=\mu^k$ vertices that all have the same pattern $(i_1,\dots,i_r)$.

  We first note that for every $j\in [r-1]$, the sequence $(S_{k-1},S_k)$ is an $i_j$-tower on $w_j$
  with the witness sequence $(T_k)$, where $T_k=X'$.
  Verifying the conditions \ref{t1}--\ref{t3} is the same as in the $k=1$ case, and \ref{t4} again holds by definition: $w_j\in T_k$.
  Similarly, $(S^{i_r}_{s_{i_r}-1},S^{i_r}_{s_{i_r}},\dots,S^{i_r}_{k-1},S_k)$ is an
  $i_r$-tower on the vertex $v_{i_r}\in X_{k-1}$. For this, we take a witness sequence
  $T^{i_r}_{s_{i_r}},\dots,T^{i_r}_{k-1}$ of the tower
  $(S^{i_r}_{s_{i_r}-1},S^{i_r}_{s_{i_r}},\dots,S^{i_r}_{k-1})$ and extend it by
  setting
  $T^{i_r}_k=X'$; the conditions \ref{t1}--\ref{t4} are then easy to check
  using the fact that $(S^{i_r}_{s_{i_r}-1},S^{i_r}_{s_{i_r}},\dots,S^{i_r}_{k-1})$ is
  already an $i_r$-tower on $v_{i_r}$: for \ref{t1}, we already have $|S_k|=\mu^k$, whereas for \ref{t3}, we know that every $z\in S_k$ is in the common neighborhood of $X'$, and is also connected to $S_{k-1}$ by an $i_r$-colored edge, so $S_k\subseteq N_{i_r}(S^{i_r}_{k-1})\cap N^*(T^{i_r}_k)$. Every other requirement holds by induction.

  If $i_a=i_b$ for some distinct $a,b\in[r]$, then two vertices in
  $X_k=\{w_1,\dots,w_{r-1},v_{i_r}\}\subseteq \hX$ are connected by an
  $i_a$-cascade (again by condition \ref{c1}), which we had ruled out. So all $r$ colors appear in
  $(i_1,\dots,i_r)$ exactly once, and thus we can relabel the vertices in $X_k$
  as $v_1,\dots,v_r$ to get the desired $i$-towers on the (new) $v_i$'s.
\end{proof}

\begin{proof}[Proof of Claim \ref{cl:indep}]
  Recall that $H$ is the graph on vertex set $L_0$ where we add an edge $vw$ in color $i$ whenever $v$ and $w$ are connected by an $i$-cascade in $G$.
  
  Assume that $G$ satisfies the property in Claim
  \ref{claim:cascade_tower}, as it does w.h.p, and
  let $X\subseteq L_0$ be a set of $4r-2$ vertices.
  We show that $X$ contains two distinct vertices connected by
  an $i$-cascade, for some $i\in[r]$.
  For this, split $X$ into two
  disjoint sets $\hX,\hX'$ of $2r-1$ vertices each. If either set contains two
  vertices connected by an $i$-cascade for some $i\in[r]$, then we are done.
  Otherwise, by Claim \ref{claim:cascade_tower}, there are subsets $\{v_1,
  \dots, v_r\} \subseteq \hX$ and $\{v'_1, \dots, v'_r\} \subseteq \hX'$ and
  an $i$-tower $(S^i_{s_i-1},S^i_{s_i}, \dots, S^i_m)$ on $v_i$ and
  another $i$-tower $(S'^i_{s'_i-1},S'^i_{s'_i}, \dots, S'^i_m)$ on $v'_i$ for
  every color $i\in [r]$, such that $S^1_m = \dots = S^r_m=:S_m$ and $S'^1_m =
  \dots = S'^r_m=:S'_m$. Let $i \in [r]$ be a color such that $e_i(R,R')
  \ge e(R,R')/r$, where $R=S_m\setminus S_m'$ and $R'=S_m'\setminus S_m$. Then
  $v=v_i$ and $v'=v'_i$ are connected by an $i$-cascade, and we
  are again done. 
\end{proof}

\subsection{Proof of Claim \ref{cl:haxell}}

Let us recall the statement: given, say, a red cycle $C$ of size $O(1/p)$ in the auxiliary graph $H$, we want to find a red cycle $C^*$ in $G$ that covers all the vertices of $C$. We use the following strategy.

For each edge $e$ in $C$, we have a cascade on top of it, so we know that there are many short (of length at most $2m + 1$) red paths connecting the endpoints of $e$, all with internal vertices in $U$. Let us denote the set of internal vertices of these paths by $\mathcal{H}_e = \{P_e^1, \dots, P_e^{\ell}\}$. In order to create a cycle $C^*$, it suffices to choose exactly one $P_e \in \mathcal{H}_e$ for each $e$ so that they are all pairwise disjoint. We use Theorem \ref{thm:hyp_matching} to achieve this. Taking the $\calH_e$ as the $(2m)$-uniform hypergraphs, it is enough to check that for any subset $E' \subset E(C)$ and any $Y \subseteq U$ of size $|Y| \le 4m|E'|$ there is a path $P_e \in \calH_e$, for some $e \in E'$ that completely avoids $Y$. 

The proof relies on two ingredients. First, we will show that most cascades (or, rather, the associated towers) corresponding to the edges in $E'$ are disjoint on almost all levels (Claim \ref{claim:towers_disjoint}), thus it is impossible for $Y$ to significantly intersect each of them. Second, if we remove a small fraction of the vertices from each level of a cascade connecting $v$ and $w$, then it still contains a path from $v$ to $w$ (Claim \ref{cl:robust}).

The following notion of \emph{independent towers} will be crucial for our applications of Lemma \ref{lemma:tuples_expand}.

\begin{definition}[Independent towers]
  Suppose that $1\leq s\leq f\leq m$ and that we are given
  a collection $\{(S_{s-1}^j,S_s^j,\dots,S_f^j) \}_{j\in [t]}$ of $t$
  towers\footnote{More
  precisely, each $(S_{s-1}^j,S_s^j,\dots,S_f^j)$ is an $i_j$-tower on some vertex
  $v_j\in L_0$, but the precise values for $i_j$ and $v_j$ do not matter
  here. We use this kind of sloppiness throughout the proof.}.
  We say that this collection is \emph{independent} if there exists
  a witness sequence $(T_s^j,\dots,T_f^j)$ for every tower
  $(S_{s-1}^j,\dots,S_f^j)$ such that all sets of the form $T_s^j \cup \{v\}$, 
  where $j\in [t]$ and $v \in S_{s-1}^j$, are \emph{distinct}.
\end{definition}

Our next claim says that among towers on distinct vertices,
there is always a large subset of independent towers.

\begin{claim}\label{claim:indep}
  Let $1\leq s\leq f\leq m$ and let $t\geq 0$. Let $v_1,\dots,v_t$ be distinct
  vertices in $L_0$ and for each $j\in [t]$, let $\calT_j =
  (S_{s-1}^j,\dots,S_f^j)$ be a tower on $v_j$. Then there is a set
  $\calI\subseteq [t]$ of at least $t/r$ indices such that the towers
  $\{\calT_j\}_{j\in \calI}$ are independent.
\end{claim}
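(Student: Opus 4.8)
The statement we must prove is Claim~\ref{claim:indep}: given $t$ towers $\calT_j=(S_{s-1}^j,\dots,S_f^j)$ on \emph{distinct} vertices $v_1,\dots,v_t$, we can find a subset $\calI$ of size at least $t/r$ such that the towers $\{\calT_j\}_{j\in\calI}$ are independent, meaning we can choose witness sequences $(T_s^j,\dots,T_f^j)$ so that all sets of the form $T_s^j\cup\{v\}$ with $v\in S_{s-1}^j$ are pairwise distinct. The plan is to exploit the fact that a tower comes with a \emph{lot of freedom} in the choice of the bottom witness set $T_s^j$: condition \ref{t2} only demands $S_s^j\subseteq N_{i_j}(v_j)\cap N(S_{s-1}^j)\cap\CN(T_s^j)$, so any $(r-1)$-set $T$ with $S_s^j\subseteq \CN(T)$ and $v_j\notin T$ (if $s=1$) or $v_j\in T$ (if $s>1$) serves as a valid bottom witness (the higher witnesses $T_{s+1}^j,\dots,T_f^j$ are fixed once and for all from the given tower). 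The key point is that the \emph{identity} of the set $T_s^j\cup\{v\}$ determines $v_j$: indeed, when $s>1$ we have $v_j\in T_s^j$ and we will arrange things so that $v$ is recoverable too, and when $s=1$ we have $S_{s-1}^j=\{v_j\}$ so the "$v$" in $T_s^j\cup\{v\}$ is literally $v_j$. Hence towers on distinct vertices automatically have distinct witness sets \emph{provided the within-tower sets $T_s^j\cup\{v\}$, $v\in S_{s-1}^j$, are already distinct among themselves}; the obstruction is purely internal to each tower.

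So the real work is to understand when, for a single tower $\calT_j$ on $v_j$, the sets $\{T_s^j\cup\{v\}:v\in S_{s-1}^j\}$ can be made distinct. When $s=1$ this is automatic: $S_0^j=\{v_j\}$ is a singleton, so there is only one such set. When $s>1$, we have $|S_{s-1}^j|=\mu^{s-1}$ of them, and $v_j\in T_s^j$; we need the $\mu^{s-1}$ sets $T_s^j\cup\{v\}$ to be distinct, i.e.\ we need $v\notin T_s^j$ for all $v\in S_{s-1}^j$ (otherwise $T_s^j\cup\{v\}=T_s^j$ collapses and several $v$ give the same set). Since $|T_s^j|=r-1$ and $v_j$ already occupies one slot, $T_s^j$ contains at most $r-2$ other vertices, so at most $r-2$ of the levels... no — at most $r-2$ vertices of $S_{s-1}^j$ could lie in $T_s^j$. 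This is exactly where the factor $r$ (really $r-1$, but $r$ is a safe bound) comes from, but it does not by itself reduce $t$; it reduces each $|S_{s-1}^j|$ by a bounded amount, which does not help directly. I therefore expect the intended argument is different: one should partition the indices $[t]$ into $r$ classes according to some "slot" or "coordinate" information and pick the largest class. Concretely, for $s>1$ and each $j$, look at the position of $v_j$ within the fixed ordered witness tuple, or —more plausibly— note that the witnesses $T_k^j$ for $k\ge s$ were chosen during the construction in Claim~\ref{claim:cascade_tower} as sets $X'$ of "free" vertices, and $v_j$ is one of the $r$ colour-classes; bucketing by which colour-index $v_j$ played at the branching step gives $r$ buckets, and within one bucket one can re-choose the $T_s^j$ consistently to avoid collisions.

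The steps I would carry out, in order: (1) Handle the trivial case $s=1$ first — here $S_{s-1}^j=\{v_j\}$, the sets $T_1^j\cup\{v_j\}$ are determined by $v_j$ hence automatically distinct across distinct $v_j$, so $\calI=[t]$ works and we are done with no loss. (2) For $s>1$, recall from Definition~\ref{def:tower}\ref{t4} that $v_j\in T_s^j$, so $T_s^j\cup\{v\}\supseteq\{v_j\}$; the map $(j,v)\mapsto T_s^j\cup\{v\}$ thus always "remembers" $v_j$ as an element, so a collision $T_s^j\cup\{v\}=T_s^{j'}\cup\{v'\}$ with $j\neq j'$ would force both $v_j$ and $v_{j'}$ into a common $(\le r-1)$-set together with at most $r-3$ further vertices — I would argue this cannot happen once we have thinned to the right subfamily. (3) The thinning: exhibit a colouring $c:[t]\to[r]$ (to be read off from the tower data — e.g.\ the rank of $v_j$ among the sorted vertices of $T_s^j$, giving at most $r-1\le r$ values) such that within a monochromatic class, re-selecting $T_s^j$ to be (a shift/relabelling of) the same fixed $(r-1)$-set template makes all the $T_s^j\cup\{v\}$ genuinely distinct; take $\calI$ to be a largest colour class, of size $\ge t/r$. (4) Verify that the re-chosen $T_s^j$ is still a legal witness: it must satisfy \ref{t2} ($S_s^j\subseteq\CN(T_s^j)$, which holds as long as every vertex we put into $T_s^j$ is a common neighbour of $S_s^j$) and \ref{t4} ($v_j\in T_s^j$); since $S_s^j$ has $\mu^s\gg r$ vertices and $T_s^j$ only needs $r-1$ of their common neighbours, there is ample room, and one invokes an expansion/density property (Lemma~\ref{lemma:tuples_expand} or a cruder bound, already assumed w.h.p.) to guarantee enough common neighbours exist.

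The main obstacle will be step (3)–(4): making precise the bucketing that yields the clean factor $1/r$ while simultaneously certifying that the re-chosen bottom witnesses remain valid (satisfy \ref{t2}, \ref{t4}, and still sit in $L_0$). In particular one must be careful that "re-choosing $T_s^j$" does not disturb the relationship between $S_{s-1}^j$ and $S_s^j$ encoded by \ref{t2}: we are free to change $T_s^j$ only among $(r-1)$-sets $T$ with $v_j\in T$, $T\subseteq\CN_G(S_s^j)\cap L_0$, and $T$ disjoint from... nothing is required disjoint from $S_{s-1}^j$ a priori, but for independence we want $T_s^j\cap S_{s-1}^j$ controlled. The delicate bookkeeping is to show such a $T$ exists with the additional "distinctness template" property, and this is where a short counting/expansion argument is needed — everything else is essentially pigeonhole on the $r$ buckets.
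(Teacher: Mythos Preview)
Your proposal has a concrete error and misses the clean argument.

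\textbf{The error in step (1).} For $s=1$ you claim $\calI=[t]$ always works because ``the sets $T_1^j\cup\{v_j\}$ are determined by $v_j$''. This is false. Take $r=3$ and two towers on $v_1\neq v_2$ with witnesses $T_1^1=\{a,v_2\}$ and $T_1^2=\{a,v_1\}$ (both are legal: $(T4)$ only forbids $v_j\in T_1^j$). Then $T_1^1\cup\{v_1\}=\{a,v_1,v_2\}=T_1^2\cup\{v_2\}$, so the two towers are \emph{not} independent. Hence even for $s=1$ you genuinely need to pass to a subfamily, and the factor $1/r$ is not slack.

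\textbf{The unnecessary detour in steps (3)--(4).} You propose to \emph{re-choose} the bottom witnesses $T_s^j$ from $\CN_G(S_s^j)\cap L_0$ and invoke expansion (Lemma~\ref{lemma:tuples_expand}) to guarantee such choices exist. None of this is needed, and it introduces an unwanted ``w.h.p.'' into what is a purely deterministic combinatorial statement. Keep the witness sequences exactly as given.

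\textbf{The actual argument.} Build the \emph{collision graph} $G_T$ on $\{\calT_1,\dots,\calT_t\}$: join $\calT_j$ to $\calT_{j'}$ if $T_s^j\cup\{v\}=T_s^{j'}\cup\{v'\}$ for some $v\in S_{s-1}^j$, $v'\in S_{s-1}^{j'}$. The claim follows once you show $\Delta(G_T)\le r-1$, since then a greedy independent set has size $\ge t/r$, and any independent set in $G_T$ is exactly an independent family of towers (within a single tower the sets $T_s^j\cup\{v\}$ are automatically distinct: for $s>1$ because $T_s^j\subseteq L_0$ while $v\in S_{s-1}^j\subseteq L_{s-1}$, and for $s=1$ because $S_0^j$ is a singleton).

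To bound the degree, show that adjacency forces $v_{j'}\in T_s^j$. If $s>1$, then in a collision $T_s^j\cup\{v\}=T_s^{j'}\cup\{v'\}$ the $L_0$-part on each side is precisely $T_s^j$ (resp.\ $T_s^{j'}$), so $T_s^j=T_s^{j'}$, and $(T4)$ gives $v_{j'}\in T_s^{j'}=T_s^j$. If $s=1$, the collision reads $T_1^j\cup\{v_j\}=T_1^{j'}\cup\{v_{j'}\}$; since $v_j\neq v_{j'}$ and $v_j\notin T_1^j$ by $(T4)$, we must have $v_{j'}\in T_1^j$. Either way $v_{j'}$ lies in the fixed $(r-1)$-set $T_s^j$, so $\calT_j$ has at most $r-1$ neighbours. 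You already wrote essentially this observation in your step~(2); the point is that it finishes the proof immediately---no bucketing, no re-selection, no random-graph input.
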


\begin{proof}
  We define a graph $G_T$ on the vertex set $\{\calT_1,\dots,\calT_t\}$
  where $\calT_j$ and $\calT_{j'}$ for $j\neq j'$ are connected by an edge if
  and only if $T^j_s\cup \{v\} = T^{j'}_s\cup \{v'\}$ for some $v\in S_{s-1}^j$
  and $v'\in S_{s-1}^{j'}$. We will show that the maximum degree of $G_T$ is 
  at most $r-1$. This clearly implies that $G_T$ contains an independent set of size 
  at least $t/r$ (e.g. by choosing its vertices greedily), which is exactly what we need.

  We first show that if $\calT_j$ and $\calT_{j'}$ are adjacent in $G_T$ then $v_{j'}\in T_s^j$. 
  Indeed, if $s=1$, then for each $j\in [t]$ we have $S_0^j = \{v_j\}$ by \ref{t4}. Hence 
  $\calT_j$ and $\calT_{j'}$ can only be adjacent if $T_1^j\cup \{v_{j'}\}=T_1^{j'}\cup\{v_j\}$,
  but then $v_j\ne v_{j'}$ implies $v_{j'}\in T_1^j$. On the other hand, if $s>1$ then 
  $T_s^j\subseteq L_0$ and $S_{s-1}^j\subseteq L_{s-1}$ where $L_0\cap L_{s-1}=\emptyset$, 
  so $\calT_j$ and $\calT_{j'}$ can only be adjacent if $T_s^j=T_s^{j'}$. Once again, \ref{t4} 
  then implies that $v_{j'}\in T_s^{j'}=T_s^j$.
  But $T_s^j$ has $r-1$ elements, so there are at most $r-1$ different choices for this $v_{j'}$.
  Thus $\calT_j$ has at most $r-1$ neighbors in $G_T$.
\end{proof}

The next claim shows that in every small collection of independent towers, a significant fraction of them are almost mutually disjoint on any given level $k$.

\begin{claim} \label{claim:towers_disjoint}
  The following holds with high probability. For every family $\{(S_{s-1}^j, \dots, S_{f}^j)\}_{j \in [t]}$ of $t$ independent $i$-towers, where $1 \le s \le f \le m$ and $t \le n^{\eps}/ (\mu^k p)$ for some $k \in \{s, \dots, f\}$, there is a set $\calI_k \subseteq [t]$ of size $|\calI_k| \ge t / 2^k$ such that 
  $$
    \Big| S_k^j \cap \bigcup_{j' \in \calI_k \setminus \{j\}} S_k^{j'} \Big| \le (80 r^r)^k \cdot n^{-\eps/4} \mu^k
  $$
  for every $j \in \calI_k$.
\end{claim}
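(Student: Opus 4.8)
The plan is to build the index set $\calI_k$ level by level, starting from $\calI_{s-1}=[t]$ and thinning out at each step so that the towers remaining in $\calI_\ell$ are ``almost disjoint'' on level $\ell$; we lose at most a factor $2$ in the size at each of the $k-s+1\le k$ steps, which accounts for the $t/2^k$ bound. The key probabilistic input is Lemma~\ref{lemma:tuples_expand} applied to the set $L=L_\ell$: for any collection of at most $\teps/p$ distinct $r$-sets $X_i\subseteq[n]\setminus L_\ell$, the union $\bigcup_i\CN(X_i,L_\ell)$ has size $(1\pm\sqrt\teps)$ times the expected $\sum_i|L_\ell|p^r$, i.e. the common neighborhoods are nearly disjoint. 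We want to invoke this w.h.p.\ simultaneously for all $\ell\in[m]$ and with $\teps$ chosen as a small negative power of $n$, say $\teps = n^{-\eps/2}$, so that $\sqrt\teps = n^{-\eps/4}$ matches the error term in the statement; since $m$ is a constant and $|L_\ell|=\Theta(n)\gg 50r\log n/(\teps p^r)$ for this choice of $\teps$ (here one uses $p^r=n^{-1+r\eps}$ and that $\teps$ is only a tiny polynomial factor), the hypothesis of Lemma~\ref{lemma:tuples_expand} is satisfied and a union bound over $\ell$ costs nothing.

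The inductive step is the heart of the argument. Suppose $\calI_{\ell-1}$ has been constructed with $|\calI_{\ell-1}|\ge t/2^{\ell-1}$ and the near-disjointness bound holds on level $\ell-1$. For each $j\in\calI_{\ell-1}$, property~\ref{t3} says $S_\ell^j\subseteq\bigcup_{v\in S_{\ell-1}^j}\CN(T_\ell^j\cup\{v\},L_\ell)$, so if we let $\calX^j=\{\,T_\ell^j\cup\{v\}:v\in S_{\ell-1}^j\,\}$ be the corresponding family of $r$-sets, then $S_\ell^j$ is contained in the union of the common neighborhoods of $\calX^j$ in $L_\ell$, and by independence of the towers all the $r$-sets appearing across different $j$ (and across different $v$ within a fixed $j$) are distinct. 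The total number of these $r$-sets is at most $\sum_{j\in\calI_{\ell-1}}|S_{\ell-1}^j|=|\calI_{\ell-1}|\cdot\mu^{\ell-1}\le t\mu^{\ell-1}\le n^\eps\mu^{\ell-1}/(\mu^k p)\le n^{\eps}/(\mu p)$, which we need to be at most $\teps/p=n^{-\eps/2}/p$; this is where the bound $t\le n^\eps/(\mu^k p)$ and $k\ge\ell$ together with $\mu=\Theta(n^{r\eps})$ being a growing power of $n$ get used — one picks $q$ (hence $\eps$) so that $\mu^{1-\text{(number of levels)}}$ absorbs the $n^\eps$ and leaves $n^{-\eps/2}$. (If this clean inequality does not hold directly I would instead split $\calI_{\ell-1}$ into $O(1)$ blocks each small enough to apply Lemma~\ref{lemma:tuples_expand}, and keep the largest block, which only changes constants.) Applying the lemma, $\big|\bigcup_{j\in\calI_{\ell-1}}S_\ell^j\big|$ — actually the union of all the common neighborhoods, which contains it — has size at most $(1+\sqrt\teps)$ times the sum of the individual expectations. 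A standard averaging / Markov argument then shows that at most half of the indices $j\in\calI_{\ell-1}$ can have $|S_\ell^j\cap\bigcup_{j'\ne j}S_\ell^{j'}|$ larger than roughly $2\sqrt\teps\cdot\mu^{\ell}\cdot(\text{ratio of }\mu^\ell\text{ to expected piece})$; discarding those ``bad'' indices gives $\calI_\ell$ with $|\calI_\ell|\ge|\calI_{\ell-1}|/2\ge t/2^\ell$ and the desired overlap bound on level $\ell$, with the constant $(80r^r)^{\ell-1}$ being multiplied by a bounded factor $80r^r$ to get $(80r^r)^\ell$ — the $r^r$ comes from the fact that $|S_\ell^j|=\mu^\ell$ while each $r$-set's common neighborhood only has expected size $|L_\ell|p^r=2r^r\mu^\ell/\mu^{\ell-1}\cdot\mu^{\ell-1}$, wait, $|L_\ell|p^r = 2r^r\mu$ by~\eqref{eq:mu}, so the ratio of $|S_\ell^j|=\mu^\ell$ to the ``budget'' $|S_{\ell-1}^j|\cdot|L_\ell|p^r = \mu^{\ell-1}\cdot 2r^r\mu = 2r^r\mu^\ell$ is $\Theta(1/r^r)$, and carefully tracking these gives exactly the exponential-in-$k$ constant.

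The main obstacle I anticipate is bookkeeping the constants and the powers of $n$ so that everything fits: one must verify that throughout the induction the number of $r$-sets fed into Lemma~\ref{lemma:tuples_expand} never exceeds $\teps/p$, that $|L_\ell|$ is large enough for the chosen $\teps$, and that the overlap bound $(80r^r)^k n^{-\eps/4}\mu^k$ is strictly less than $\mu^k$ (so the sets really are ``mostly disjoint'') — this needs $k\le m\le 1/\eps$ constant so that $(80r^r)^k$ is bounded while $n^{-\eps/4}\to0$. Another subtle point is handling the ``overlap'' quantity correctly: $|S_\ell^j\cap\bigcup_{j'\in\calI_\ell\setminus\{j\}}S_\ell^{j'}|$ should be bounded using $\sum_j|S_\ell^j| - |\bigcup_j S_\ell^j|\le \sqrt\teps\sum_j|L_\ell|p^r\cdot|S_{\ell-1}^j|$ (via Bonferroni/inclusion–exclusion exactly as in the proof of Lemma~\ref{lemma:tuples_expand}), and then an averaging step to pass from the total overlap to a per-index bound after discarding the top half. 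None of these steps is deep, but the constant-chasing is where an error would most likely creep in.
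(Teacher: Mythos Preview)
Your overall plan is the same as the paper's: fix $\teps=n^{-\eps/2}$, apply Lemma~\ref{lemma:tuples_expand} to each $L_\ell$, and build $\calI_s\supseteq\calI_{s+1}\supseteq\dots\supseteq\calI_k$ by induction, halving at each step via an averaging argument. The base case $\ell=s$ is fine, and your counting $t\mu^{\ell-1}\le n^\eps/(\mu p)\le \teps/p$ is correct (using $\mu=\Theta(n^{r\eps})$ with $r\ge 2$).

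The genuine gap is in the inductive step. You write that ``by independence of the towers all the $r$-sets appearing across different $j$ \dots\ are distinct'', but this is exactly what independence does \emph{not} give you for $\ell>s$. The definition of an independent collection only asserts that the $r$-sets $T_s^j\cup\{v\}$ with $v\in S_{s-1}^j$ are pairwise distinct, i.e.\ distinctness is guaranteed \emph{only at the starting level $s$}. For $\ell>s$ nothing stops two towers from having $T_\ell^j=T_\ell^{j'}$ and a common vertex $v\in S_{\ell-1}^j\cap S_{\ell-1}^{j'}$, in which case the $r$-set $T_\ell^j\cup\{v\}$ is counted twice and Lemma~\ref{lemma:tuples_expand} no longer gives the near-additivity you need. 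You assume the near-disjointness on level $\ell-1$ as part of the induction hypothesis, but then never use it.

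The paper's fix is precisely to use that hypothesis: for $j\in\calI_{\ell-1}$ set $\hS_{\ell-1}^j=S_{\ell-1}^j\setminus\bigcup_{j'\ne j}S_{\ell-1}^{j'}$, so that the $\hS_{\ell-1}^j$ are genuinely pairwise disjoint and (since $\hS_{\ell-1}^j\subseteq L_{\ell-1}$ with $\ell-1\ge 1$ while $T_\ell^j\subseteq L_0$) the $r$-sets $T_\ell^j\cup\{v\}$ with $v\in\hS_{\ell-1}^j$ really are all distinct. One then defines $Z_\ell^j=\bigcup_{v\in\hS_{\ell-1}^j}\CN(T_\ell^j\cup\{v\},L_\ell)$ and runs the averaging argument on the $Z_\ell^j$'s. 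The price is that $S_\ell^j\not\subseteq Z_\ell^j$ anymore; the missing piece $S_\ell^j\setminus Z_\ell^j$ is contained in $\bigcup_{v\in S_{\ell-1}^j\setminus\hS_{\ell-1}^j}\CN(T_\ell^j\cup\{v\},L_\ell)$, which by the induction hypothesis has at most $(80r^r)^{\ell-1}n^{-\eps/4}\mu^{\ell-1}$ summands, and another application of Lemma~\ref{lemma:tuples_expand} bounds it by $O(r^r)\cdot(80r^r)^{\ell-1}n^{-\eps/4}\mu^\ell$. This extra additive error, combined with the $\Theta(r^r)$ blow-up from the averaging on the $Z_\ell^j$'s, is exactly what produces the geometric factor $(80r^r)^\ell$; without this step there is no mechanism in your argument that forces the constant to grow with $\ell$ at all.
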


\begin{proof}
  Note that with high probability Lemma \ref{lemma:tuples_expand} applies with $\teps=n^{-\eps/2}$ and
  $L=L_k$ simultaneously for every $k\in[m]$. Thus, we may assume that for
  every $k\in [m]$ and every family of $T \leq \teps/p$ different $r$-sets
  $X_1,\dots,X_T\subseteq [n]\setminus L_k$, we have
  \begin{equation}\label{eq:310a}
    \Big|\bigcup_{\ell=1}^T\CN(X_\ell,L_{k})\Big| \in (1 \pm \sqrt{\teps})T|L_{k}|p^r = (1 \pm \sqrt{\teps}) T \cdot 2r^r \mu.
  \end{equation}
  We derive Claim \ref{claim:towers_disjoint} from this property. For convenience, let $\xi_k = (80 r^r)^k n^{-\eps/4}$ for $k \in [m]$.

  Let $\{(S^j_{s-1},S_s^j,\dots,S^j_f)\}_{j\in [t]}$ be a collection of $t$
  independent $i$-towers as in the statement of the claim. For each $j\in [t]$,
  let $(T_s^j,\dots,T_f^j)$ be a corresponding witness sequence such
  that all sets of the form
  $T_s^j\cup \{v\}$, where $j\in [t]$ and $v\in S_{s-1}^j$, are distinct (this
  is possible because of the independence). We prove the claim by induction on $k$.

  Suppose $k = s$. For each $j \in [t]$ define $Z^j_k = N(S_{k-1}^j) \cap N^*(T_k^j, L_k)$, and set 
  $$
    Z_k = \bigcup_{j \in [t]} Z_k^j \qquad \text{ and } \qquad \hat Z_k = \bigcup_{j \neq j' \in [t]} Z_k^j \cap Z_k^{j'}.
  $$
  Note that the definition $Z_k^j$ is almost the same as the definition of $S_k^j$, however without restricting the neighborhood of $S_{k-1}^j$ to a specific color. Therefore, we have $S_k^j \subseteq Z_k^j$. Alternatively, we can define $Z_k^j$ as
  $$
    Z^j_k=\bigcup_{v\in S_{k-1}^j}\CN(T^j_k\cup \{v\},L_k). 
  $$
  By \eqref{eq:310a}, each $Z_k^j$ is of size $|Z_k^j| \in (1 \pm \sqrt{\teps}) 2r^r \mu^k$ (recall that $|S_{k-1}^j| = \mu^{k-1}$). We prove a somewhat stronger statement than needed, namely that the set $\calI_k \subseteq [t]$ consisting of all indices $j \in [t]$ such that
  $$
    \Big| Z_k^j \cap \hat Z_k \Big| = \Big| Z_k^j \cap  \bigcup_{j' \in [t] \setminus \{j\}}  Z_k^{j'}   \Big| \le 10 \sqrt{\teps} \cdot 2r^r \mu^k < \xi_k \mu^k,
  $$
  is of size $|\calI_k| \ge t / 2$. 

  To this end, we first estimate the size of $Z_k$ and $\hat Z_k$. As the towers are independent, there are exactly $\sum_{j \in [t]} |S_{k-1}^j| = t \mu^{k-1} < 1 / (n^{\eps/2} p)= \teps/p$ sets of the form $T_k^j \cup \{v\}$ where $j \in [t]$ and $v \in S_{k-1}^j$ (recall $\mu = \Theta(n^{r \eps})$). Thus applying \eqref{eq:310a} again we obtain $|Z_k| \in (1 \pm \sqrt{\teps}) t \cdot 2r^r \mu^k$, 
  which in turn gives the following estimate on the size of $\hat Z_k$.
  $$
    |\hat Z_k| \le \Big( \sum_{j \in [t]} |Z_k^j|  \Big) - | Z_k | \le 2 \sqrt{\teps} t\cdot 2r^r \mu^k.
  $$ 
  Indeed, here $|\hat Z_k|$ is the number of elements in $L_k$ that are counted at least twice by the sum $\sum_{j \in [t]} |Z_k^j|$, whereas $|Z_k|$ is the number of elements counted at least once.
  
  Putting everything together, we have the following:
  \begin{align*}
    (1 - \sqrt{\teps}) t \cdot 2r^r \mu^k &\le | Z_k | = \sum_{j \in [t]} |Z_k^j \setminus \hat Z_k| + |\hat Z_k| \\
    &\le \sum_{j \in [t]} |Z_k^j| - \sum_{j \in [t] \setminus \mathcal{I}_k} |Z_k^j \cap \hat Z_k| + |\hat Z_k| \\
    &\le (1 + \sqrt{\teps}) t  \cdot 2r^r \mu^k - (t - |\mathcal{I}_k|) 10 \sqrt{\teps} \cdot 2r^r \mu^k + 2 \sqrt{\teps} t \cdot 2r^r \mu^k\\
    &\le (1 + 3 \sqrt{\teps} - 5 (t - |\mathcal{I}_k|) \tfrac{2\sqrt{\teps}}{t}) t \cdot 2r^r \mu^k.
  \end{align*}
  This implies $|\mathcal{I}_k| > t/2$, as required.
  
  \medskip
  Next, suppose that $k > s$ and the claim holds for $k - 1$. As $t \le n^{\eps}/ (\mu^k p) \le n^{\eps}/ (\mu^{k-1} p)$, we can apply the induction hypothesis to obtain a family $\calI_{k-1} \subseteq [t]$ of $t'=|\calI_{k-1}|\ge t/2^{k-1}$ almost disjoint towers on the $(k-1)$'st level. For each $j \in \calI_{k-1}$ let
  $$
    \hS_{k-1}^{j} = S_{k-1}^{j} \setminus \bigcup_{j' \in \calI_{k-1} \setminus \{j\}} S_{k-1}^{j'}.
  $$
  Then these sets are all disjoint and have size
  \begin{equation} \label{eq:hS}
    (1 - \xi_{k-1}) \mu^{k-1} \le |\hS_{k-1}^j| \le \mu^{k-1}.
  \end{equation} 
  Note that $\hS_{k-1}^j\subseteq L_{k-1}$ with $k>1$, so $\hS_{k-1}^j$ is also disjoint from $T_k^j\subseteq L_0$. More importantly, these facts imply that all the sets of the form $T_k^j \cup \{v\}$ for $j \in \calI_{k-1}$ and $v \in \hS_{k-1}^j$ are distinct. Now we can argue as in the base case.

  For each $j \in \calI_{k-1}$ define 
  $$
    Z_k^j = N(\hS_{k-1}^{j}) \cap N^*(T_k^j, L_k) = \bigcup_{v \in \hS_{k-1}^j} N^*(T_k^j \cup \{v\}, L_k)
  $$
  and let
  $$
    Z_k = \bigcup_{j \in \calI_{k-1}} Z_k^j \qquad \text{ and } \qquad \hat Z_k = \bigcup_{j \ne j' \in \calI_{k-1}} Z_k^j \cap Z_k^{j'}.
  $$
  Observe that, unlike in the base case, we do not have $S_k^j \subseteq Z_k^j$: some vertices in $S_k^j$ might only have neighbors in $S_{k-1}^j \setminus \hS_{k-1}^j$. However, $S_{k-1}^j \setminus \hS_{k-1}^j$ is small, so $S_k^j \setminus Z_k^j$ will turn out to be negligible. We will deal with these vertices at the end. 

  From $|\hS_{k-1}^j| \le \mu^{k-1}\le \teps/p$ and \eqref{eq:310a} we have 
  $$
    |Z_k^j| \le (1 + \sqrt{\teps}) \cdot 2r^r \mu^k \le (1 + \xi_{k-1}) \cdot 2r^r \mu^k.
  $$
  As already mentioned, sets of the form $T_k^j \cup \{v\}$ for $j \in \calI_{k-1}$ and $v \in \hS_{k-1}^j$ are all distinct, thus by \eqref{eq:hS}, there are 
  $$
    \sum_{j \in \calI_{k-1}} |\hS_{k-1}^j| \ge (1 - \xi_{k-1}) \mu^{k-1} t' 
  $$
  many of them. Therefore, applying \eqref{eq:310a} again we get $|Z_k| \ge (1 - 2 \xi_{k-1}) t' \cdot 2r^r \mu^k$ which, in turn, gives the following bound on the size of $\hat Z_k$.
  $$
    |\hat Z_k| \le \Big( \sum_{j \in \calI_{k-1}} |Z_k^j| \Big) - |Z_k| \le 3 \xi_{k-1} t' \cdot 2r^r \mu^k.
  $$
  We now define $\calI_k \subseteq \calI_{k-1}$ as the set of all indices $j \in \calI_{k-1}$ such that
  $$
    \Big| Z_k^j \cap \hat Z_k \Big| < 20 \xi_{k-1} \cdot 2r^r \mu^k = \tfrac{\xi_{k}}{2} \mu^k.
  $$
  Using the above estimates on the size of $Z_k$ and $\hat Z_k$, we get the following:
  \begin{align*}
    (1 - 2 \xi_{k-1}) t' \cdot 2r^r \mu^k &\le | Z_k | = \sum_{j \in \calI_{k-1}} |Z_k^j \setminus \hat Z_k| + |\hat Z_k| \\
    &\le \sum_{j \in \calI_{k-1}} |Z_k^j| - \sum_{j \in \calI_{k-1} \setminus \mathcal{I}_k} |Z_k^j \cap \hat Z_k| + |\hat Z_k| \\
    &\le (1 + \xi_{k-1}) t' \cdot 2r^r \mu^k - (t' - |\mathcal{I}_k|)  20 \xi_{k-1} \cdot 2r^r \mu^k + 3 \xi_{k-1} t' \cdot 2r^r \mu^k\\
    &\le (1 + 4 \xi_{k-1} - (t' - |\mathcal{I}_k|) 20 \tfrac{\xi_{k-1}}{t'}) t' \cdot 2r^r \mu^k,
  \end{align*}
  which implies $|\calI_k| > t' / 2 \ge t/2^k$.

  So far we have shown that $\calI_k$ has the desired size, and that for each $j \in \calI_k$, the intersection of $S_k^j$ with other sets $S_k^{j'}$ \emph{inside} $Z_k^j$ contains at most $\xi_k \mu^k / 2$ vertices. Therefore, it suffices to prove $|S_k^j \setminus Z_k^j| \le \tfrac{\xi_k}{2} \mu^k$ to finish the proof. Note that 
  $$
    S_k^j \setminus Z_k^j \subseteq N(S_{k-1}^j \setminus \hS_{k-1}^j) \cap N^*(T_k^j, L_k) = \bigcup_{v \in S_{k-1}^j \setminus \hS_{k-1}^j} N^*(T_k^j \cup \{v\}, L_k).
  $$
  As $|S_{k-1}^j \setminus \hS_{k-1}^j| \le \xi_{k-1} \mu^{k-1}$, from \eqref{eq:310a} we get
  $$
    |S_k^j \setminus Z_k^j| \le (1 + \sqrt{\teps}) \xi_{k-1} \cdot 2r^r \mu^k < \xi_k \mu^k / 2,
  $$
  as required. This concludes the argument.
\end{proof}

Next, we show that cascades are resilient to small changes.

\begin{claim} \label{cl:robust}
    There is a constant $c > 0$ such that the following holds with high probability. Suppose $v, w \in L_0$ are connected by an $i$-cascade with underlying $i$-towers $\calT_v = (S_{s_v-1}^v, \dots, S_f^v)$ and $\calT_w = (S^w_{s_w-1}, \dots, S_f^w)$, for some $1 \le s_v, s_w \le f \le m$. Then for any subset $Y \subseteq U$ such that $|S_k^u \cap Y| \le c \mu^k$ for every $u \in \{v, w\}$ and $k \in \{s_u, \dots, f\}$, the graph $G[(\{v,w\} \cup U) \setminus Y]$ contains an $i$-colored $v$-$w$ path of length at most $2f + 1$.
\end{claim}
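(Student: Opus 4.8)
The plan is to fix witness sequences $(T^v_k)_{k=s_v}^{f}$ and $(T^w_k)_{k=s_w}^{f}$ for the two towers and to propagate a \emph{surviving set} forward along each of them. Starting from $v$, I would set $\hS^v_{s_v}:=S^v_{s_v}\setminus Y$ (every vertex of which is joined to $v$ by an $i$-edge, by \ref{t2}), and for $k>s_v$ let $\hS^v_k$ be the set of vertices of $S^v_k\setminus Y$ having an $i$-neighbour in $\hS^v_{k-1}$; define $\hS^w_k$ symmetrically. By \ref{t3}, every $z\in\hS^v_k$ is then the endpoint of an $i$-colored path from $v$ of length $k-s_v+1\le f$ all of whose internal vertices lie in $U\setminus Y$, and likewise for $w$. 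Given this, the whole claim reduces to: (a) showing the surviving sets barely shrink, $|S^v_k\setminus\hS^v_k|\le\delta^*\mu^k$ for a small constant $\delta^*$ (and the same for $w$); and (b) connecting a level-$f$ survivor of the $v$-tower to one of the $w$-tower by a short $i$-colored walk avoiding $Y$, from which a simple path is extracted (so no internal disjointness of the two towers is needed).

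For (a), the key point is that a vertex $z\in S^v_k\setminus Y$ can fail to enter $\hS^v_k$ only when \emph{every} $i$-neighbour of $z$ in $S^v_{k-1}$ lies in $B:=S^v_{k-1}\setminus\hS^v_{k-1}$; since $z\in\CN(T^v_k,L_k)$ by \ref{t3} and $z$ has a neighbour in $B$, this puts $z$ into $N(B,L_k)\cap\CN(T^v_k,L_k)=\bigcup_{u\in B}\CN(\{u\}\cup T^v_k,\,L_k)$. As $|B|\le\mu^{k-1}\le\mu^{m-1}=O(n^{-\eps}/p)$ and the $r$-sets $\{u\}\cup T^v_k$ ($u\in B$) are distinct, Lemma~\ref{lemma:tuples_expand} — applied with a single fixed $\teps$ (say $\teps=1/4$) for all the levels $L_k$ at once, which holds w.h.p.\ since $|L_k|=\Theta(n)\gg\log n/(\teps p^r)$ — bounds the size of this union by $(1+\sqrt\teps)|B|\,|L_k|p^r=3r^r\mu|B|$. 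Writing $\delta_k:=|S^v_k\setminus\hS^v_k|/\mu^k$, this gives $\delta_{s_v}\le c$ and $\delta_k\le c+3r^r\delta_{k-1}$, hence $\delta_k\le c(3r^r)^{m+1}$ for all $k$; choosing the constant $c$ (which may depend on $r$ and $\eps$, since $m$ does) so small that $c(3r^r)^{m+1}<\delta^*:=\tfrac1{48r}$ gives (a). What makes this work is that one vertex of level $k-1$ ``feeds'' only $\Theta(\mu)$ vertices of $S^v_k$ — because $|\CN(\{u\}\cup T,L_k)|=(1\pm o(1))|L_k|p^r=\Theta(\mu)$ — so a $\delta_{k-1}$-fraction loss at level $k-1$ propagates to only an $O(\delta_{k-1})$-fraction loss at level $k$, and the geometric blow-up over the constantly many levels is swallowed by taking $c$ small.

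For (b) I would split according to Definition~\ref{def:cascade}. Under \ref{c1}, $S^v_f=S^w_f$, so $\hS^v_f\cap\hS^w_f$ has size $\ge(1-2\delta^*)\mu^f>0$; choosing $z_0$ there and concatenating the $i$-paths $v\to z_0$ and $z_0\to w$ yields an $i$-colored $v$--$w$ walk of length $\le2f$ avoiding $Y$, from which an $i$-colored simple $v$--$w$ path of length $\le2f+1$ is extracted. The same works under \ref{c2} when $|S^v_m\cap S^w_m|\ge\mu^m/2$, since then $|\hS^v_m\cap\hS^w_m|\ge(\tfrac12-2\delta^*)\mu^m>0$. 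Finally, if \ref{c2} holds and $|S^v_m\cap S^w_m|<\mu^m/2$, then $|R_v|,|R_w|>\mu^m/2$ while $|R_v\setminus\hS^v_m|,|R_w\setminus\hS^w_m|\le\delta^*\mu^m$, and combining the cascade inequality $e_i(R_v,R_w)\ge e(R_v,R_w)/r$ with Lemma~\ref{lemma:densityXY} (used with $\alpha=\tfrac12$; if $R_v\setminus\hS^v_m$ or $R_w\setminus\hS^w_m$ is smaller than $D\log n/p$, first enlarge it inside $R_v$ resp.\ $R_w$, and absorb the resulting $O(\log n\cdot\mu^m)$ terms using $\mu^mp=\Omega(n^\eps)\gg\log n$) gives
\[
 e_i\!\left(R_v\cap\hS^v_m,\ R_w\cap\hS^w_m\right)\ \ge\ \frac{e(R_v,R_w)}{r}-3\delta^*(\mu^m)^2p-O(\log n\cdot\mu^m)\ \ge\ \Big(\tfrac1{8r}-3\delta^*-o(1)\Big)(\mu^m)^2p\ >\ 0 .
\]
Hence there is an $i$-edge $ab$ with $a\in R_v\cap\hS^v_m$ and $b\in R_w\cap\hS^w_m$ (so $a\neq b$ and $a,b\notin Y$), and concatenating $v\to a$, the edge $ab$, and $b\to w$ produces an $i$-colored $v$--$w$ walk of length $\le m+1+m=2f+1$ avoiding $Y$; extracting a simple path finishes the argument.

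I expect step (a) to be the main obstacle: a priori, deleting a constant fraction of each $S^v_k$ could, through the tower, disconnect almost all of the deeper levels — a typical vertex of $S^v_k$ has just one $i$-neighbour in $S^v_{k-1}$ — so the proof really hinges on the ``one vertex feeds only $\Theta(\mu)$'' estimate, which is precisely what Lemma~\ref{lemma:tuples_expand} supplies. By comparison the density bookkeeping in case~\ref{c2} (the $O(\log n/p)$-size corrections) is routine, because $\mu^mp$ is polynomially larger than $\log n$.
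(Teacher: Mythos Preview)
Your proposal is correct and follows essentially the same approach as the paper. The paper tracks ``bad sets'' $B^u_k = N(B^u_{k-1},S^u_k)\cup(Y\cap S^u_k)$ and shows $|B^u_k|\le(8r^r)^kc\mu^k$ via the same application of Lemma~\ref{lemma:tuples_expand}; your surviving sets $\hS^u_k$ are precisely the complements $S^u_k\setminus B^u_k$ (up to the harmless distinction between ``has some neighbour in $B$'' and ``all $i$-neighbours lie in $B$'', which both imply membership in $\bigcup_{u\in B}\CN(\{u\}\cup T_k,L_k)$). The case split and density bookkeeping in \ref{c1}/\ref{c2} are likewise the same, with only cosmetic differences in the constants (the paper uses $\alpha=1/(8r+8)$ in Lemma~\ref{lemma:densityXY} rather than your $\alpha=1/2$ with the $O(\log n\cdot\mu^m)$ correction).
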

\begin{proof}
  With high probability, Lemma \ref{lemma:tuples_expand} applies with
  with $\teps=1/4$ and $L = L_k$ simultaneously for every $k \in [m]$. Thus, we may assume that for every $k \in [m]$ and every family of $t \le \teps / p$ different $r$-sets $X_1, \dots, X_t \subseteq [n] \setminus L_k$ we have
  \begin{equation} \label{eq:expand_robust}
    \Big| \bigcup_{j \in [t]} N^*(X_j, L_k) \Big| \le 2 t |L_k|p^r = 4 r^r t \mu.
  \end{equation}
  Furthermore, by Lemma \ref{lemma:densityXY}, we can assume that for every disjoint $X, X' \subseteq V(G)$ of size $|X|, |X'| \gg \log n / p$ we have
  \begin{equation} \label{eq:dens}
    e(X, X') \in (1 \pm \alpha) |X||X'| p,
  \end{equation}
  for $\alpha = 1 / (8r+8)$. We show that these properties suffice to derive the claim.

  Consider some $u \in \{v, w\}$. We first show that there is a subset $B^u \subseteq S_f^u$ of size at most $\alpha |S_f^u| = \alpha \mu^f$, such that for every vertex $u' \in S_f^{u} \setminus B^u$, there is an $i$-colored $u$-$u'$ path avoiding $Y$ of length at most $f$. 

  To this end, we define the sets $B^u_{s_u}, \dots, B^u_f$ level by level as follows. Let $B_{s_u}^u = Y \cap S_{s_u}^u$ and then iteratively set $B_k = N(B_{k-1}, S_k^u) \cup (Y \cap S_k^u)$ for $k \in \{s_u + 1, \dots, f\}$. It is easy to see by induction on $k\in\{s_u,\dots,f\}$ (and using the definition of an $i$-tower) that for every vertex 
  $u' \in S_k^u \setminus B_k^u$, the graph $G[(\{v\} \cup U) \setminus Y]$ contains an $i$-colored $u$-$u'$ path of length at most $k$. We will prove, using induction on $k$, that
  \begin{equation}\label{eq:blub}
    |B_k^u|\le (8r^r)^{k} c \mu^k \quad\text{for every $k\in \{s_u,\dots,f\}$}.
  \end{equation}
  Choosing a $c<\alpha/(8r^r)^f$ in the assumptions of the claim then ensures that $B^u = B^u_f$ is of size at most $\alpha \mu^f$.

  The case $k=s_u$ follows from the assumptions on $Y$ and the definition of $B_{s_u}^u$, so let $k>s_u$ and assume that \eqref{eq:blub} holds for $k-1$. By the definition of $S_k^u$, we have
  \[
    N(B_{k-1}^u, S_k^u) \subseteq N(B_{k-1}^u) \cap N^*(T_k^u,L_k) \subseteq \bigcup_{v \in B_{k-1}^u} N^*(T_k^u \cup \{v\}, L_k).
  \]
  As $B_{k-1}^u$ is asymptotically smaller than $1/p$ (see \eqref{eq:mu_cow_m}), we can use \eqref{eq:expand_robust} to get
  \[
    |N(B_{k-1}^u, S_k^u)| \le 4 r^r |B_{k-1}^u|  \mu \le 4 r^r (8r^r)^{k-1} c \mu^k. 
  \]
  The assumption of the claim states $|S_k^u \cap Y| \le c \mu^k$, which implies the desired bound on $B_k^u$ (with room to spare).

  We now use these sets to find a desired path from $v$ to $w$. The towers $\calT_{v}$ and $\calT_{w}$ form an $i$-cascade connecting
  $v$ and $w$, thus by definition, we either have $S^{v}_f=S^{w}_f$, or
  $f = m$ and $e_i(R_v,R_w)\ge e(R_v,R_w)/r$ where $R_v= S^{v}_m\setminus S^{w}_m$ and $R_w= S^{w}_m\setminus S^{v}_m$. In the former case we are immediately done: since
  $|B^{v} \cup B^{w}| < 2 \alpha |S^{v}_f|$ there is a vertex $z \in S^{v}_f \setminus (B^{v} \cup B^{w})$ and hence an $i$-colored $v$-$z$-$w$ walk, containing an $i$-colored path,
  of length at most $2f \le 2m+1$, disjoint from $Y$. Similarly, if we are in the latter case and $|S^{v}_m \cap S^{w}_m| > 2 \alpha \mu^m$, then we are done for the same reason.
  
  Let us therefore assume that we are in the latter case and
  $|R_v|, |R_w| \ge (1 - 2\alpha) \mu^m$. It is enough to show that there is an edge $z_v z_w \in G$ of color $i$ such that $z_v\in R_v\setminus B^{v}$ and
  $z_w\in R_w\setminus B^{w}$. Indeed, such an edge would connect an $i$-colored $v$-$z_v$ path and an $i$-colored $w$-$z_w$ path that both avoid $Y$, thus providing a desired path from $v$ to $w$ of length at most $2m + 1$. To show that such an edge exists, note that $|R_v|,|R_w|\gg (\log n)/p$ (see \eqref{eq:mu_cow_m}), so \eqref{eq:dens} gives
  \[
    e_i(R_v,R_w) \ge e(R_v,R_w)/r \ge |R_v||R_w|p/2r \ge
    (1-4\alpha)|S^{v}_m||S^{w}_m|p/r.
  \]
  On the other hand, as $B^{v}$ is contained in some set $B$ of size $\alpha |S^v_m|$, it touches at most 
  \[
    e(B^v, R_w) \le e(B,R_w) \le 2|B||R_w|p\leq 2\alpha |S_m^{v}| |S_m^{w}|p.
  \] 
  of these edges (again, using \eqref{eq:dens}). We similarly get that $B^{w}$ touches at most $2\alpha |S_m^{v}||S_m^{w}|p$ such edges, which means that
  \[
  e_i(R_v\setminus B^{v}, R_w\setminus B^{w}) \ge (1-(4+4r)\alpha)|S^{v}_m||S^{w}_m|p/r >0
  \] 
  with $\alpha=1/(8r+8)$, so at least one edge avoids both $B^v$ and $B^w$.
\end{proof}

We finally have all the necessary tools to prove Claim \ref{cl:haxell}.

\begin{proof}[Proof of Claim \ref{cl:haxell}]
  Suppose that $G$ satisfies the statement in Claim \ref{claim:towers_disjoint} and Claim \ref{cl:robust}, as it does w.h.p, and let $c > 0$ be a constant given by Claim \ref{cl:robust}. We show that these two properties are enough to prove Claim \ref{cl:haxell}.

  Let $C$ be an $i$-colored cycle in $H$ for some color $i \in [r]$.
  For every edge $e=vv'$ of $C$, we define a $2m$-uniform hypergraph
  $\calH_e=(U,\calE_e)$ on the vertex set $U$, where a
  set $A\subseteq U$ of size $2m$ belongs to $\calE_e$ if $G[A\cup \{v,v'\}]$
  contains an $i$-colored $v$-$v'$ path. Note that if for every edge $e\in
  E(C)$ we can find a hyperedge $f_e \in \calE_e$ such that all these hyperedges
   are pairwise disjoint, then the corresponding paths
  form an $i$-colored cycle $C^*$ in $G$ such that $V(C^*) \cap L_0 = V(C) \cap L_0$. It is therefore enough to show that such a family of hyperedges exists for every monochromatic cycle
  $C$ in $H$ of length at most $K/p$.

  This will follow from Theorem \ref{thm:hyp_matching}, provided that for every $E'\subseteq E(C)$, we have
  \begin{equation}\label{eq:hax}
    \tau\big(\bigcup_{e\in E'}\calE_e\big)>(4m-1)(|E'|-1),
  \end{equation}
  where $\tau(\calE)$ is the smallest size of a set $X\subseteq U$ intersecting
  every hyperedge in $\calE$.

  Consider some $E'\subseteq E(C)$, and label the endpoints of each edge $e\in E'$ with $v_e$ and $w_e$. We first pass to a large subset $F \subseteq E'$ that has some convenient properties. This is done in three steps.  

  First, let $E''\subseteq E'$ be a subset of size $|E'|/3$ such that the edges in $E''$ are pairwise disjoint (i.e., no two edges in $E''$ share an endpoint). This is possible because all the edges of $E'$ lie on a cycle.
  Next, recall from the definition of $H$ that each edge $e\in E''$ represents
  an $i$-cascade between $v_e$ and $w_e$ formed by an $i$-tower
  $\calT_{v_e}=(S^{v_e}_{s_{v_e}-1},\dots, S^{v_e}_{f_e})$ on $v_e$ and another
  $i$-tower $\calT_{w_e}=(S^{w_e}_{s_{w_e}-1},\dots, S^{w_e}_{f_e})$ on $w_e$
  (see Definition \ref{def:cascade}). Note that $f_e$ is the same for both
  vertices $v_e$ and $w_e$.
  As there are $m$ possible values for each of $s_v,s_w$ and $f_e$, we can
  find a subset $E''' \subseteq
  E''$ of size at least $|E''|/m^3 \geq |E'|/(3m^3)$ and levels
  $s_v,s_w,f\in[m]$ such that
  $s_{v_e}=s_v$,
  $s_{w_e}=s_w$ and $f_e=f$ for every $e\in E'''$. Finally, applying Claim \ref{claim:indep} twice (once for the
  the collection $\{\calT_{v_e}\}_{e\in E'''}$ and once for the collection $\{\calT_{w_e}\}_{e\in E'''}$), we find a subset
  $F \subseteq E'''$ of size $|E'|/(3m^3r^2)$ such that $\{\calT_{v_e}\}_{e\in F}$ and
  $\{\calT_{w_e}\}_{e\in F}$ are both independent collections of towers (but their union might not be).

  Let $t = |F| \leq K/p$ and let $e_1,\dots,e_t$ be the edges in $F$. Rephrasing \eqref{eq:hax}, we need to prove that no set $Y \subseteq U$ of size $|Y| = 4m|E'|
  \le 12m^4r^2 t$ covers all the hyperedges
  in $\bigcup_{e \in F} \calE_e$. That is, there is an edge $e=v_ew_e$ in $F$ and
  an $i$-colored path $P$ of length at most $2m+1$ connecting $v_e$ and $w_e$
  such that $V(P)\subseteq (\{v_e, w_e\} \cup U) \setminus Y$. Claim \ref{cl:robust} suggests that it suffices to show that there is an edge $e \in F$ whose cascade mostly evades $Y$.

Let $Y^{v_e}_k=S^{v_e}_k \cap Y$ for every $e \in F$ and $k\in \{s_v,\dots, f\}$. We will show that for each $k \in \{s_v, \dots, f\}$ most sets $Y^{v_e}_k$ are quite small. More precisely, the set $\calB_k \subseteq F$ of all edges $e \in F$ such
that $|Y_{k}^{v_e}|\geq c \mu^k$ is of size
\begin{equation}\label{eq:bla}
  |\calB_k|< t/(2m).
\end{equation}
Consider some $k \in \{s_v, \dots, f\}$. First, we show that $|\calB_k|\le n^{\eps}/(\mu^k p)$. Indeed, if this is not the case then choose an arbitrary subset 
$\calJ\subseteq \calB_k$ of size $n^{\eps}/(\mu^k p)$ and let $\calI_k \subseteq \calJ$ be the subset provided by Claim \ref{claim:towers_disjoint} when applied to the towers $\{ \calT_{v_e} \}_{e \in \calJ}$. For each $e \in \calI_k$, the set $S_k^{v_e}$ intersects $\bigcup_{e' \in \calI_k} S_k^{v_{e'}}$ on $o(\mu^k)$ vertices, thus it contains at least $c \mu^k / 2$ unique elements from $Y$ (i.e. elements which do not appear in any other $S_k^{v_e'}$ for $e' \in \calI_k$). This implies
$$
  |Y| \ge \tfrac{c}{2} \mu^k |\calI_k| \ge \tfrac{c}{2} \mu^k \cdot n^{\eps} / (2^k \mu^k p) \gg t,
$$
which is a contradiction. Therefore $|\calB_k| \le n^{\eps} / (\mu^k p)$, thus we can apply Claim \ref{claim:towers_disjoint} to all the towers $\{ \calT_{v_e} \}_{e \in \calB_k}$. Let $\calI_k$ be the obtained set of indices and, again, note that each $S_k^{v_e}$ contains at least $c \mu^k / 2$ unique elements from $Y$. Assuming $|\calB_k| \ge t / (2m)$, the contradiction follows similarly as in the previous case:
$$
  |Y| \ge \tfrac{c}{2} \mu^k |\calI_k| \ge \tfrac{c}{2} \mu^k \cdot t / (2^k \cdot 2m) \gg t.
$$

Finally, it follows from \eqref{eq:bla} that there is a subset $\calI_v \subseteq F$ of size $|\calI_v|>t/2$ such that for all $e \in \calI_v$ and all $k\in\{s_v,\dots, f\}$, we have $|Y_k^{v_e}|\le c \mu^k$.
By the same argument, a subset $\calI_w \subseteq F$ of size $|\calI_w|>t/2$ exists for the other endpoints of the edges, as well, such that $|Y_k^{w_e}|\le c \mu^k$ for every $e \in \calI_w$ and all $k\in\{s_w,\dots, f\}$. In particular, there is an edge $e \in \calI_v\cap \calI_w$ such that $Y$ intersects at most a $c$-fraction of each level of the cascade on $e$. The existence of a desired path now follows from Claim \ref{cl:robust}. 
\end{proof}

\section{Concluding remarks}

In this paper we made a step towards the random analog of the theorem of
Erd\H{o}s, Gy\'arf\'as and Pyber \cite{erdos1991vertex} on monochromatic cycle covers. Our result
leaves a few interesting open problems:
\begin{itemize}  
  \item The most interesting open problem is to show that there is
    a \emph{partition} of the vertices of $\Gnp$ into constantly many
    monochromatic cycles (or paths), even for some larger
    values of $p$.

  \item It would be nice to give a more precise estimate on the threshold for the
    property that every $r$-coloring of the edges of $\Gnp$ admits
    a vertex cover by a number of monochromatic cycles depending only on $r$.
    In view of the construction of Bal and DeBiasio that we mentioned in the introduction,
    it seems natural to guess that the threshold should be of
    the order $(\log n / n)^{1/r}$. Note that in the proof of Lemma
    \ref{lemma:tinyset-cover} we heavily rely on the fact that there are only
    constantly many levels in a cascade, which requires 
    $p\geq n^{-1/r+\eps}$ for some constant $\eps>0$.

  \item We did not put much effort into optimizing the number of cycles we cover with 
    in Theorem~\ref{thm:mainr2}, thus it could most likely
    be improved. It would be interesting to see if one could obtain similar bounds
    to the case $G = K_n$, e.g., do $O(r\log r)$ cycles suffice?

\end{itemize}

\bibliographystyle{abbrv}
\bibliography{references}

\end{document}